\numberwithin{equation}{section}
\numberwithin{figure}{section}
\theoremstyle{plain}
\newtheorem{thm}{\protect\theoremname}
\theoremstyle{plain}
\newtheorem{lem}[thm]{\protect\lemmaname}
\theoremstyle{plain}
\newtheorem{cor}[thm]{\protect\corollaryname}
\theoremstyle{plain}
\theoremstyle{plain}
\newtheorem{rem}[thm]{\protect\remarkname}
\providecommand{\corollaryname}{Corollary}
\providecommand{\lemmaname}{Lemma}
\providecommand{\conditionname}{Condition}
\providecommand{\theoremname}{Theorem}
\providecommand{\assumptionname}{Assumption}
\providecommand{\remarkname}{Remark}
\title[Optimal strategies for  COVID-19~models]
      {Optimal quarantine strategies for COVID-19~control models}
\author[Ellina Grigorieva, Evgenii Khailov and Andrei Korobeinikov]{}
\subjclass{49J15, 58E25, 92D30.}
\keywords{COVID-19, SEIR~epidemics model, nonlinear control system, Pontryagin maximum principle.}
\begin{document}

\maketitle

\centerline{\scshape Ellina V. Grigorieva}
\medskip
{\footnotesize
\centerline{Department of Mathematics and Computer Sciences}
\centerline{Texas Woman's University, Denton, TX 76204, USA}
\centerline{\email{egrigorieva@twu.edu}}}

\medskip

\centerline{\scshape Evgenii N. Khailov}
\medskip
{\footnotesize
\centerline{Faculty of Computational Mathematics and Cybernetics}
\centerline{Lomonosov Moscow State University, Moscow, 119992, Russia}
\centerline{\email{khailov@cs.msu.su}}}

\medskip

\centerline{\scshape Andrei Korobeinikov}
\medskip
{\footnotesize
\centerline{School of Mathematics and Information Science,}
\centerline{Shaanxi Normal University, Xi'an, 710062, China}
\centerline{\email{akorobeinikov777@gmail.com}}}

\begin{abstract}
At the present stage, quarantine is the only available policy to control COVID-19 epidemic.
However, long-term quarantine is extremely expensive measure.
To explore the possibility of controlling and suppressing the epidemic  at the lowest possible cost,
in this paper we apply the toolbox of optimal control theory to the problem of COVID-19 control.
In this paper, we create two control
SEIR~type models describing the spread of COVID-19 in a human population. For each model, we solve
an optimal control problem and find an optimal quarantine strategy that ensures the minimal level
of the infected population at the lowest possible cost. The properties of the corresponding optimal
controls are established analytically by applying the Pontryagin maximum principle. The optimal
solutions are obtained numerically using BOCOP~2.0.5 software package. The behavior of the
appropriate optimal solutions and their dependence on the basic reproductive ratio, population
density, and the duration of quarantine are discussed, and practically relevant conclusions are
made.
\end{abstract}

\section{Introduction}
The COVID-19 pandemic began in Wuhan, China with the discovery of unknown pneumonia cases in late
December~2019. The city of Wuhan was closed and quarantined by 22~January, 2020. On the 30th of
January, World Health Organization~(WHO) recognized the outbreak of a new coronavirus (SARS-CoV-2)
as a public health emergency of international concern; on the 11th of March it announced that the
outbreak had become a pandemic and on the 13th of March that Europe had become its center. Two key
factors of this virus, which contribute to the difficultly of the pandemic, are the wide variations
of the incubation period (in some cases over 14~days) and a very large number of asymptomatic
patients who are contagious but demonstrate mild or no clinical manifestations.

As no vaccine against COVID-19 is currently (the summer of~2020) available, the quarantine (or a
regime of massive self-isolation) remains to be the only accessible control policy. The Chinese
experience of the February-March of~2020 shows that the quarantine can effectively stop the spread
of the infection and annihilate the virus. However, at the same time, the large-scale quarantine is
also extremely expensive policy inflicting huge economic losses. Moreover, while some groups of a
population, such as children and the retired, can be quarantined at a comparatively low cost, the
cost quickly grows as more and more of the economically active people have to be isolated.

By quarantine, we mean social and economic constraints imposed by the national and local
governments to combat the spread of the virus by the mean of (\emph{i}) reducing the number of
social contacts and (\emph{ii}) reducing the probability of passing the infection in the remaining
contacts. These restrictions and their severity can be implemented by a variety of different forms.
A specific implementation depends on many factors, including geographic location (where the
susceptible population is located) and the local population density.

One of the difficulties associated with COVID-19 control is that the actual functional form that
describes the transmission of the infection from one individual to another is currently unknown in
details. However, as our study demonstrates, this can be a factor of principal importance for
designing of a control policy. In this paper, to take this fact into account, we formulated two
different models of the COVID-19 transmission, which cover two most likely cases. For each of these
models, we formulated  the corresponding optimal control problems (which are respectively referred
to as OCP-1 and OCP-2). Each of the problems was analytically studied and then solved numerically
for a variety of time intervals and basic reproduction numbers.



While the objective of this paper is finding a quarantining policy that minimize both, the level of
infection and the cost of the quarantine, we left the consequences of the epidemic for the economy
out of the paper scope. The interested reader can find discussion of the various economic scenarios
associated with epidemics in~\cite{sepulveda1,sepulveda2}.

\section{Optimal control problems}\label{optimal-control-problems}
Let us consider the spread of COVID-19 in a human population of size $N(t)$. We postulate that
the population is divided into the five classes and denote:
\begin{itemize}
\item $S(t)$ -- the number of susceptible individuals;
\item $E(t)$ -- the number of exposed individuals  who exhibit
                no symptoms and are not contagious yet (the individuals in the latent state);
\item $I(t)$ -- the number of infected individuals who have very mild symptoms or no symptoms at
                all (it is believed that for COVID-19 those are about 80\% of the infectious
                individuals);
\item $J(t)$ -- the number of infected individuals who are seriously ill;
\item $R(t)$ -- the number of recovered individuals.
\end{itemize}
Hence, the natural equality
\begin{equation}\label{num1.1}
S(t)+E(t)+I(t)+J(t)+R(t)=N(t)
\end{equation}
holds.
Please note that the proposed population structure postulates that there are two parallel pathways
of the disease progression, namely, symptomatic, $S\rightarrow E\rightarrow J\rightarrow R$, and
asymptomatic, $S\rightarrow E\rightarrow I\rightarrow R$, which is shown in Figure~\ref{diagram0}.

In this paper, we are only interested in a single epidemic, which we assume to be reasonably short.
Therefore, we ignore the demographic processes (that is, the new births and natural deaths)
assuming that they occur at a considerably slower time scale. Then the spread of COVID-19 can be
described by the following system of differential equations:
\begin{equation}\label{num1.0}
\left\{
\begin{aligned}
S'(t)&=-f_{1}(S(t),I(t),N(t))-f_{2}(S(t)J(t),N(t)),  \\
E'(t)&= f_{1}(S(t),I(t),N(t))+f_{2}(S(t)J(t),N(t))-\gamma E(t), \\
I'(t)&= \sigma_{1}\gamma E(t)-\rho_{1}I(t), \\
J'(t)&= \sigma_{2}\gamma E(t)-\rho_{2}J(t), \\
R'(t)&= \rho_{1}I(t)+(1-q)\rho_{2}J(t), \\
N'(t)&=-q\rho_{2}J(t).
\end{aligned}
\right.
\end{equation}

\begin{figure}[htb]
\begin{center}
\includegraphics[width=11.0cm,height=6.0cm]{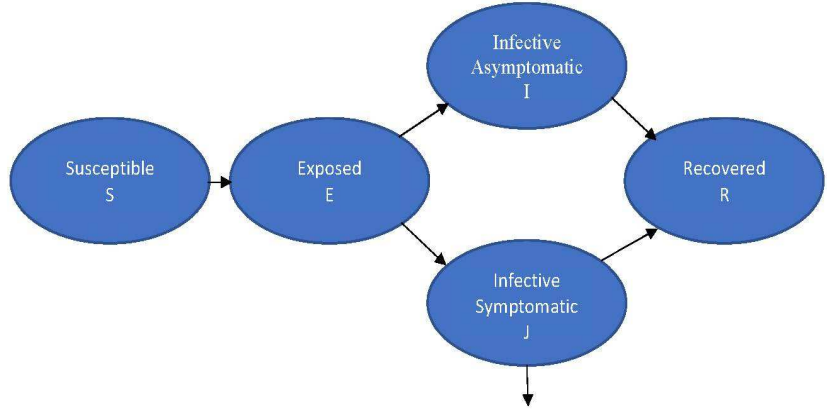}
\caption{Schematic compartmental diagram for the transmission dynamics of COVID-19.}\label{diagram0}
\end{center}
\end{figure}

Model (\ref{num1.0}) differs from the classical SEIR~model by having two parallel infection
passways and, hence, two parallel infectious classes, namely, the asymptomatically infectious
individuals $I(t)$ and the symptomatically infectious individuals $J(t)$. System~(\ref{num1.0})
postulates that the susceptible individuals $S(t)$ are infected through contact with
asymptomatically and symptomatically infected individuals, $I(t)$ and $J(t)$, at incidence rates
$f_{1}(S(t),I(t),N(t))$ and $f_{2}(S(t)J(t),N(t))$, respectively. After an instance of infection
the infected move into the exposed compartment $E(t)$ where remains for an average $1/\gamma$ days.
(Hence, $\gamma$ is the per capita rate with which the exposed individuals move into the infectious
groups.) Moreover, $\sigma_{1}$ and $\sigma_{2}$ are, respectively, the fractions of the exposed
individuals that move into the classes of the asymptomatically infected individuals $I(t)$ and the
symptomatically infected individuals $J(t)$. Hence, $\sigma_{1}+\sigma_{2}=1$. For COVID-19 it is
currently assumed that $\sigma_{1}\approx 0.8$ and $\sigma_{2} \approx 0.2$; we use these figures
in our computations. Parameters $\rho_{1}$ and $\rho_{2}$ are the per capita removal (recovery plus
death) rates for the $I(t)$ and $J(t)$ groups, respectively. We assume that $\rho_{1}I(t)$ and
$(1-q)\rho_{2}J(t)$ are the recovery rates of individuals in the $I(t)$ and $J(t)$ groups,
respectively, and that $q\rho_{2}J(t)$ is the disease-induced death rate. (Hence, value $q\in[0,1]$
is the death probability.) Moreover, in this paper, we assume that the individuals in $J(t)$ class
exhibit the characteristic symptoms of the disease and, hence, are isolated, either at homes or in
a hospital. We assume that this implies that their ability to spread the virus is significantly
limited and postulate that $\widetilde\beta_{1}>\widetilde\beta_{2}$ and $\beta_{1}>\beta_{2}$
(and even $\widetilde\beta_{1}\gg\widetilde\beta_{2}$, $\beta_{1}\gg\beta_{2}$) hold.

System~(\ref{num1.0}) should be complemented by the corresponding initial conditions
\begin{equation}\label{num1.3}
\begin{aligned}
S(0)&=S_{0}, \quad E(0)=E_{0}, \quad\; I(0)=I_{0}, \\
J(0)&=J_{0}, \quad R(0)=R_{0}, \quad   N(0)=N_{0}.
\end{aligned}
\end{equation}
We assume that at $t=0$ the values $S_{0}$, $E_{0}$, $I_{0}$, $J_{0}$, $N_{0}$ are positive and
$R_{0}\geq0$. Moreover, the equality:
\begin{equation}\label{num1.4}
S_{0}+E_{0}+I_{0}+J_{0}+R_{0}=N_{0},
\end{equation}
where $N_{0}$ is the initial population size, holds.

We have to point that the precise forms of the incidence rates $f_{1}(S(t),I(t),N(t))$ and
$ f_{2}(S(t)J(t),N(t))$ and, in particular, their functional dependence on population size $N(t)$
are currently unknown. However, as we are going to show below, the dependence of incidence rate on
population size is extremely important for designing of a quarantining strategy. In the
Kermack-McKendrick type models, it is typically assumed that the rate of infection obeys the law of
acting masses~(see, e.g., \cite{brauer1,brauer2}), and, thus, the rate is directly proportional to
the product of the current numbers of susceptible and infected individuals. Then the intensity of
infection in the susceptible compartment, that is also called the force of infection, has the form
$\beta I(t)$. However, for some infectious diseases it is more appropriate to postulate that the
force of infection depends on the relative value (density) of the infectious individuals, rather
than of their absolute number, and, hence, is expressed as
$\beta I(t)/N(t)$~(\cite{barton,brauer1}). Under this assumption, the so-called ``standard disease
incidence rate'' is respectively defined as $\beta S(t)I(t)$ or $\beta S(t)I(t)/N(t)$. Since, as we
mentioned above, for COVID-19 the real-life dependency of the incidence rate on the population size
is unknown in detail, we consider both above-mentioned rates, namely, the bilinear rates
\begin{equation}\label{bilinear}
\widetilde\beta_{1}S(t)I(t), \quad \widetilde\beta_{2}S(t)J(t)
\end{equation}
and the ``standard'' rates
\begin{equation}\label{mass-action}
\beta_{1}S(t)I(t)/N(t), \quad \beta_{2}S(t)J(t)/N(t).
\end{equation}

We believe that, with respect of the dependency of the incidence rate on population size $N(t)$,
all realistic transmission rates are bounded by these two limiting cases. Please note that, if the
population size $N(t)$ is constant, or if its variations are reasonably small, then, in the absence
of a control, these two incidence rates respectively lead to identical or to a very similar
outcomes. (See, for example, Figure~\ref{pict0} in Section~\ref{numerical-results}, which
demonstrates the dynamics of the infective population for these two incidence rates in the absence
of a control.) Nevertheless, as we show below, if quarantine is imposed, the corresponding control
models are principally different.

Indeed, quarantine results in a reduction of the number of infectious contacts (either through
reducing a probability of passing the infection, or through a direct reduction of the number of
contacts, or, more often, through both these modes). What is of importance, this reduction equally
affects all groups of epidemiological significance, apart from the group of symptomatically ill
individuals who assumed to be isolated in either case. However, this later group is small, and,
hence, we can assume that the reduction equally affects the entire population $N(t)$ as well.
In each of the group affected by the reduction, such a reduction is equivalent to introduction of
the ``effective epidemiologically active'' (that is, involved in the disease transmission)
population: for example, if at the moment $t$ the number of infectious contacts of the susceptible
group $S(t)$ is reduced by a factor $v(t)$ ($v(t)\in[0,1]$), then $v(t)S(t)$ susceptible
individuals can be considered as temporary inactive (``removed'') from the susceptible group, and,
hence, the effective susceptible population is $(1-v(t))S(t)$. As we said, quarantine affects all
groups (apart from the group $J(t)$ of the patients with serious symptoms, who are assumed to be
already isolated) equally, and, hence, the effective numbers of epidemiologically active
individuals in other groups are $(1-v(t))E(t)$, $(1-v(t))I(t)$ and $(1-v(t))R(t)$, respectively.
Furthermore, $N(t)$ is defined by~(\ref{num1.1}). However, we already assumed that the number of
infectious and severely ill, $J(t)$, is small compared with $S(t)$, $R(t)$ and even with
asymptomatic infectious group, $I(t)$. Hence, we can assume that the effective size of the entire
epidemiologically active population is
\begin{equation}\label{num1.1-22}
\begin{aligned}
\widetilde{N}(t)&=(1-v(t))S(t)+(1-v(t))E(t)+(1-v(t))I(t) \\
                &+J(t)+(1-v(t))R(t)\approx(1-v(t))N(t).
\end{aligned}
\end{equation}
Therefore, for the first of the models, under the action of quarantine the effective incidence
rates $f_{1}(S(t),I(t),N(t))$ and $f_{2}(S(t),J(t),N(t))$ are
$\widetilde\beta_{1}(1-v(t))^{2}S(t)I(t)$ and $\widetilde\beta_{2}(1-v(t))S(t)J(t)$, respectively.
For the second model, the rates are $\beta_{1}(1-v(t))S(t)I(t)/N(t)$ and $\beta_{2}S(t)J(t)/N(t)$,
respectively. As one can see, the effect of quarantine significantly depends on the actual
dependency of the transmission rate on population size $N(t)$.

We assume that model~(\ref{num1.0}) is to be an object of control, and, hence, we consider this
model at a given finite time interval $[0,T]$.

\subsection{Model~1}
Model~(\ref{num1.0}) combined with incidence rate~(\ref{bilinear}) and defined on time interval
$[0,T]$ yields the following system of differential equations:
\begin{equation}\label{num1.2}
\left\{
\begin{aligned}
S'(t)&=-S(t)\left(\widetilde\beta_{1}I(t)+\widetilde\beta_{2}J(t)\right), \\
E'(t)&= S(t)\left(\widetilde\beta_{1}I(t)+\widetilde\beta_{2}J(t)\right)-\gamma E(t), \\
I'(t)&= \sigma_{1}\gamma E(t)-\rho_{1}I(t), \\
J'(t)&= \sigma_{2}\gamma E(t)-\rho_{2}J(t), \\
R'(t)&= \rho_{1}I(t)+(1-q)\rho_{2}J(t), \\
N'(t)&=-q\rho_{2}J(t)
\end{aligned}
\right.
\end{equation}
with the initial conditions~(\ref{num1.3}).

It is easy to see that the equations of system~(\ref{num1.2}) together with initial
conditions~(\ref{num1.3}) and equality~(\ref{num1.4}) imply relationship~(\ref{num1.1}) and that
the value of $N(t)$ naturally varies (decreases due to disease-induced mortality).

For the sake of simplicity, let us perform the normalization of system~(\ref{num1.2}) and initial
conditions~(\ref{num1.3}), using the following formulas:
\begin{equation}\label{num1.5}
\begin{aligned}
s(t)&=N_{0}^{-1}S(t), \quad e(t)=N_{0}^{-1}E(t), \quad\; i(t)=N_{0}^{-1}I(t), \\
j(t)&=N_{0}^{-1}J(t), \quad r(t)=N_{0}^{-1}R(t), \quad   n(t)=N_{0}^{-1}N(t), \\
\beta_{1}&=\widetilde\beta_{1}N_{0}, \qquad\quad\; \beta_{2}=\widetilde\beta_{2}N_{0}.
\end{aligned}
\end{equation}
The substitution yields system of differential equations
\begin{equation}\label{num1.6}
\left\{
\begin{aligned}
s'(t)&=-s(t)\left(\beta_{1}i(t)+\beta_{2}j(t)\right), \; t\in[0,T], \\
e'(t)&= s(t)\left(\beta_{1}i(t)+\beta_{2}j(t)\right)-\gamma e(t), \\
i'(t)&= \sigma_{1}\gamma e(t)-\rho_{1}i(t), \\
j'(t)&= \sigma_{2}\gamma e(t)-\rho_{2}j(t), \\
r'(t)&= \rho_{1}i(t)+(1-q)\rho_{2}j(t), \\
n'(t)&=-q\rho_{2}j(t) \\
\end{aligned}
\right.
\end{equation}
with the initial conditions
\begin{equation}\label{num1.7}
\begin{aligned}
s(0)&=s_{0}, \quad e(0)=e_{0}, \quad   i(0)=i_{0}, \\
j(0)&=j_{0}, \quad r(0)=r_{0}, \quad\; n(0)=1.
\end{aligned}
\end{equation}
Here $s_{0}$, $e_{0}$, $i_{0}$, $j_{0}$ are positive, $r_{0}\geq0$ (further in this paper we will
use $r_{0}=0$ assuming that we study an initial stage of the epidemic), and the equality
\begin{equation}\label{num1.8}
s_{0}+e_{0}+i_{0}+j_{0}+r_{0}=1,
\end{equation}
following from~(\ref{num1.4}), holds.

Please note that normalization~(\ref{num1.5}) converts equality~(\ref{num1.1}) into equality
\begin{equation}\label{num1.9}
s(t)+e(t)+i(t)+j(t)+r(t)=n(t), \quad t\in[0,T].
\end{equation}

The important properties of solutions for system~(\ref{num1.6}) are established by the following
lemma.

\begin{lem}\label{lemma1}
Let system~(\ref{num1.6}) with the initial conditions~(\ref{num1.7}) have solutions $s(t)$,
$e(t)$, $i(t)$, $j(t)$, $r(t)$, $n(t)$. Then, for all $t\in (0,T]$, the solutions are  positive,
bounded and defined on the entire interval $[0,T]$.
\end{lem}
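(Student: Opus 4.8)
\emph{Proof proposal.} The plan is to establish the three assertions --- positivity, boundedness, and existence on all of $[0,T]$ --- in that order, exploiting the cascade structure of~(\ref{num1.6}): the variables $r$ and $n$ do not appear on the right-hand sides of the equations for $(s,e,i,j)$, and inside that block $s$ is driven only through $i$ and $j$. Since the right-hand side of~(\ref{num1.6}) is polynomial, hence locally Lipschitz, there is a unique solution on a maximal interval $[0,t^{*})$ with $0<t^{*}\le T$; all the estimates below are first derived on $[0,t^{*})$ and then used to push $t^{*}$ up to $T$.

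For positivity I would begin with the closed form
\[
s(t)=s_{0}\exp\!\left(-\int_{0}^{t}\bigl(\beta_{1}i(\tau)+\beta_{2}j(\tau)\bigr)\,d\tau\right),
\]
which is strictly positive on $[0,t^{*})$ irrespective of the signs of $i$ and $j$. Next let $t_{1}$ be the first instant in $(0,t^{*})$ at which one of $e$, $i$, $j$ vanishes (if there is no such instant, positivity of these three is immediate). On $[0,t_{1}]$ one has $s>0$ and $e,i,j\ge0$, so the corresponding equations of~(\ref{num1.6}) give the differential inequalities $e'\ge-\gamma e$, $i'\ge-\rho_{1}i$, $j'\ge-\rho_{2}j$; Gr\"onwall's inequality then yields $e(t)\ge e_{0}e^{-\gamma t}>0$, $i(t)\ge i_{0}e^{-\rho_{1}t}>0$ and $j(t)\ge j_{0}e^{-\rho_{2}t}>0$ on $[0,t_{1}]$, contradicting the choice of $t_{1}$. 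Hence $e,i,j>0$ on $[0,t^{*})$; then $r'=\rho_{1}i+(1-q)\rho_{2}j>0$ gives $r(t)\ge r_{0}\ge0$, and $n(t)=s(t)+e(t)+i(t)+j(t)+r(t)>0$ by~(\ref{num1.9}).

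Boundedness comes from a conservation identity. Adding the first five equations of~(\ref{num1.6}) and using $\sigma_{1}+\sigma_{2}=1$ gives $(s+e+i+j+r)'=-q\rho_{2}j=n'$; since $s_{0}+e_{0}+i_{0}+j_{0}+r_{0}=1=n(0)$ by~(\ref{num1.8}), this reproves~(\ref{num1.9}). As $n'=-q\rho_{2}j\le0$, the function $n$ is non-increasing, so $0<n(t)\le1$ on $[0,t^{*})$, and since every summand in~(\ref{num1.9}) is non-negative we conclude $0<s,e,i,j\le1$, $0\le r\le1$, $0<n\le1$: the whole solution remains in the compact box $[0,1]^{6}$.

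It remains to show the solution is defined on all of $[0,T]$. A solution of a locally Lipschitz differential equation whose graph stays inside a fixed compact set cannot reach the end of its maximal interval in finite time, so the standard continuation theorem forces $t^{*}=T$, and all six components are therefore defined on $[0,T]$. The only delicate point is the order of the argument --- positivity and the a priori bound must be secured on the maximal interval \emph{before} continuation is invoked --- together with the remark that, although the $n$-equation is not quasi-positive on its own, positivity of $n$ is recovered for free from~(\ref{num1.9}); beyond this I foresee no genuine analytic obstacle, the lemma being of the routine a priori type.
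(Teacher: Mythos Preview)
Your proof is correct and follows essentially the same route as the paper: maximal interval of existence, the closed exponential formula for $s$, positivity of $e,i,j$ (the paper defers this step to Proposition~2.1.2 in~\cite{schattler}, whereas you give the explicit first-vanishing-time/Gr\"onwall argument), positivity of $r$ and $n$ from the remaining equations and~(\ref{num1.9}), the bound $n\le1$ together with~(\ref{num1.9}) for boundedness, and finally the standard continuation theorem. The one cosmetic point is that from $r'>0$ you should state $r(t)>r_{0}\ge0$ for $t>0$ rather than merely $r(t)\ge r_{0}$, so that strict positivity on $(0,T]$ is visible when $r_{0}=0$.
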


The proof of Lemma~\ref{lemma1} is deferred to Appendix~A. Lemma~\ref{lemma1} implies that all
solutions of system~(\ref{num1.6}) with the initial conditions~(\ref{num1.7}) retain their
biological meanings for all $t\in[0,T]$.

Let us introduce a control function $u(t)$ into system~(\ref{num1.6}). We assume that this control
reflects the intensity of the quarantine. We assume that the quarantine implies ``effective
isolation'' of equal fraction $u(t)$ in groups $S(t)$, $I(t)$ and $R(t)$ (or reducing the number of
contacts of the individuals in these groups by factor $(1-u(t))$), whereas the group $J(t)$ (the
symptomatically ill individuals) is assumed to be already isolated and, therefore, its status is
not affected by the quarantine. The control satisfies the restrictions:
\begin{equation}\label{num1.10}
0\leq u(t)\leq u_{\max}<1.
\end{equation}
These considerations, combined with the discussion of the groups effective population sizes, lead to
the following control system:
\begin{equation}\label{num1.11}
\left\{
\begin{aligned}
s'(t)&=-s(t)\left(\beta_{1}(1-u(t))^{2}i(t)+\beta_{2}(1-u(t))j(t)\right), \\
e'(t)&= s(t)\left(\beta_{1}(1-u(t))^{2}i(t)+\beta_{2}(1-u(t))j(t)\right)-\gamma e(t), \\
i'(t)&= \sigma_{1}\gamma e(t)-\rho_{1}i(t), \\
j'(t)&= \sigma_{2}\gamma e(t)-\rho_{2}j(t), \\
r'(t)&= \rho_{1}i(t)+(1-q)\rho_{2}j(t), \\
n'(t)&=-q\rho_{2}j(t) \\
\end{aligned}
\right.
\end{equation}
with the corresponding initial conditions~(\ref{num1.7}).

\subsection{Model~2}
For the incidence rates~(\ref{mass-action}), the change in the size of the compartments is
described by the following system of differential equations:
\begin{equation}\label{num1.12}
\left\{
\begin{aligned}
S'(t)&=-S(t)N^{-1}(t)\left(\beta_{1}I(t)+\beta_{2}J(t)\right), \; t\in[0,T], \\
E'(t)&= S(t)N^{-1}(t)\left(\beta_{1}I(t)+\beta_{2}J(t)\right)-\gamma E(t), \\
I'(t)&= \sigma_{1}\gamma E(t)-\rho_{1}I(t), \\
J'(t)&= \sigma_{2}\gamma E(t)-\rho_{2}J(t), \\
R'(t)&= \rho_{1}I(t)+(1-q)\rho_{2}J(t), \\
N'(t)&=-q\rho_{2}J(t).
\end{aligned}
\right.
\end{equation}
The corresponding initial conditions are defined by~(\ref{num1.3}) and~(\ref{num1.4}). Here
$\beta_{1}$ and $\beta_{2}$ are the per capita rates of virus transmission from asymptomatic
infected individuals $I(t)$ and symptomatic infected individuals $J(t)$, respectively.

Substituting~(\ref{num1.5}) into system~(\ref{num1.12}) with initial conditions~(\ref{num1.3}),
we obtain the following normalized system of differential equations:
\begin{equation}\label{num1.13}
\left\{
\begin{aligned}
s'(t)&=-s(t)n^{-1}(t)\left(\beta_{1}i(t)+\beta_{2}j(t)\right), \; t\in[0,T], \\
e'(t)&= s(t)n^{-1}(t)\left(\beta_{1}i(t)+\beta_{2}j(t)\right)-\gamma e(t), \\
i'(t)&= \sigma_{1}\gamma e(t)-\rho_{1}i(t), \\
j'(t)&= \sigma_{2}\gamma e(t)-\rho_{2}j(t), \\
r'(t)&= \rho_{1}i(t)+(1-q)\rho_{2}j(t), \\
n'(t)&=-q\rho_{2}j(t) \\
\end{aligned}
\right.
\end{equation}
with the initial conditions~(\ref{num1.7}).

The positiveness, boundedness, and continuation of the solutions $s(t)$, $e(t)$, $i(t)$, $j(t)$,
$r(t)$, $n(t)$ on the entire interval $[0,T]$ for system~(\ref{num1.13}) are established by
arguments similar to those presented in Lemma~\ref{lemma1}.

Now, we introduce into system~(\ref{num1.13}) the control function (the ``quarantine intensity'')
$u(t)$. We have to stress that, due to the difference of incidence rates, the effect of quarantine
on model~(\ref{num1.13}) is different (smaller) compared to model~(\ref{num1.6}).

As above, we assume that the quarantine implies isolation of fraction $u(t)$ in groups $S(t)$,
$I(t)$ and $R(t)$, whereas the group $J(t)$ is assumed to be already isolated. Furthermore, group
$J(t)$ is comparatively small, and, hence, we can assume that
\begin{equation}\label{N=S+I+R}
N(t)\approx S(t)+I(t)+R(t)
\end{equation}
and that the fraction $u(t)$ of the population $N(t)$ is isolated. These considerations leads to
the following control system:
\begin{equation}\label{num1.14}
\left\{
\begin{aligned}
s'(t)&=-s(t)n^{-1}(t)\left(\beta_{1}(1-u(t))i(t)+\beta_{2}j(t)\right), \\
e'(t)&= s(t)n^{-1}(t)\left(\beta_{1}(1-u(t))i(t)+\beta_{2}j(t)\right)-\gamma e(t), \\
i'(t)&= \sigma_{1}\gamma e(t)-\rho_{1}i(t), \\
j'(t)&= \sigma_{2}\gamma e(t)-\rho_{2}j(t), \\
r'(t)&= \rho_{1}i(t)+(1-q)\rho_{2}j(t), \\
n'(t)&=-q\rho_{2}j(t) \\
\end{aligned}
\right.
\end{equation}
with the corresponding initial conditions~(\ref{num1.7}). As above, we assume that control $u(t)$
satisfies restrictions~(\ref{num1.10}).

Please note that for $u(t)=0$ (in the absence of quarantine) systems~(\ref{num1.11})
and~(\ref{num1.14}) become systems~(\ref{num1.6}) and~(\ref{num1.13}), respectively.
The different manifestation of control $u(t)$ in models~(\ref{num1.11}) and~(\ref{num1.14}) shows
why the dependence of an incidence rate on population size $N(t)$ is so important for studying
impacts of quarantine.

\smallskip

To formulate the optimal control problems, let us introduce set $\Omega(T)$ of all admissible
controls. The set is formed by all possible Lebesgue measurable functions $u(t)$ that for almost
all $t\in[0,T]$ satisfy restrictions~(\ref{num1.10}). For control systems~(\ref{num1.11})
and~(\ref{num1.14}), on the set $\Omega(T)$ of all admissible controls, we consider objective
function
\begin{equation}\label{num1.15}
\begin{aligned}
Q(u(\cdot))&= \alpha_{1}\left(e(T)+i(T)+j(T)\right) +
              \alpha_{2}\int\limits_{0}^{T}\left(e(t)+i(t)+j(t)\right)dt \\
           &+ 0.5\alpha_{3}\int\limits_{0}^{T}u^{2}(t)dt.
\end{aligned}
\end{equation}
Here, $\alpha_{1}$, $\alpha_{2}$ are non-negative and $\alpha_{3}$ is positive weighting
coefficients. For convenience of notation, in function~(\ref{num1.15}) we below denote the terminal
part by $P$.

In function~(\ref{num1.15}), the first two terms  reflect the COVID-19 level at the end of
quarantine period $[0,T]$ and the cumulative level over the entire quarantine period. The last term
determines the total cost of the quarantine. We have to note that the actual cost function is
unknown. However, it is obvious that quarantining some groups, such as children or the retired, is
comparatively inexpensive, and that the cost rapidly grows as wider groups of economically active
individuals become isolated. The quadratic cost function that we use in this paper qualitatively
reflects this situation and, at the same time, is mathematically simple.

Please note that phase variables $e(t)$, $i(t)$ and $j(t)$ are only present in the objective
function~(\ref{num1.15}). Moreover, the first four equations of system~(\ref{num1.11}) are
independent of variables $r(t)$ and $n(t)$. Therefore, the last two differential equations can be
omitted from system~(\ref{num1.11}). Likewise, the fifth differential equation can be excluded from
system~(\ref{num1.14}).

As a result, we state the first optimal control problem~(OCP-1) as a problem of minimizing the
objective function~(\ref{num1.15}) on the set $\Omega(T)$ of all admissible controls for
system
\begin{equation}\label{num1.16}
\left\{
\begin{aligned}
s'(t)&=-s(t)\left(\beta_{1}(1-u(t))^{2}i(t)+\beta_{2}(1-u(t))j(t)\right), \\
e'(t)&= s(t)\left(\beta_{1}(1-u(t))^{2}i(t)+\beta_{2}(1-u(t))j(t)\right)-\gamma e(t), \\
i'(t)&= \sigma_{1}\gamma e(t)-\rho_{1}i(t), \\
j'(t)&= \sigma_{2}\gamma e(t)-\rho_{2}j(t), \\
\end{aligned}
\right.
\end{equation}
with the initial conditions
\begin{equation}\label{num1.17}
s(0)=s_{0}, \quad e(0)=e_{0}, \quad i(0)=i_{0}, \quad j(0)=j_{0},
\end{equation}
where $s_{0}$, $e_{0}$, $i_{0}$, $j_{0}$ are positive and satisfy equality~(\ref{num1.8}).

Likewise, we state the second optimal control problem~(OCP-2) as a problem of minimizing the
objective function~(\ref{num1.15}) on the set $\Omega(T)$ of all admissible controls for system
\begin{equation}\label{num1.18}
\left\{
\begin{aligned}
s'(t)&=-s(t)n^{-1}(t)\left(\beta_{1}(1-u(t))i(t)+\beta_{2}j(t)\right), \\
e'(t)&= s(t)n^{-1}(t)\left(\beta_{1}(1-u(t))i(t)+\beta_{2}j(t)\right)-\gamma e(t), \\
i'(t)&= \sigma_{1}\gamma e(t)-\rho_{1}i(t), \\
j'(t)&= \sigma_{2}\gamma e(t)-\rho_{2}j(t), \\
n'(t)&=-q\rho_{2}j(t) \\
\end{aligned}
\right.
\end{equation}
with the initial conditions
\begin{equation}\label{num1.19}
s(0)=s_{0}, \quad e(0)=e_{0}, \quad i(0)=i_{0}, \quad j(0)=j_{0}, \quad n(0)=1,
\end{equation}
where $s_{0}$, $e_{0}$, $i_{0}$, $j_{0}$ are positive and satisfy equality~(\ref{num1.8}).

It is easy to see that for~OCP-2 the assumptions of Corollary~2 (Exercise~2) to Theorem~4
(chapter~4,~\cite{lee}) are correct. Namely, Lemma~\ref{lemma1}, which is also valid for the
control system~(\ref{num1.18}) and~(\ref{num1.19}), gives a uniform estimate for its solution
$(s(t),e(t),i(t),j(t),n(t))$ corresponding to any admissible control $u(t)$ from $\Omega(T)$ for
all $t\in[0,T]$. In addition, system~(\ref{num1.18}) is linear in control $u(t)$, and the integrand
$(\alpha_{2}(e+i+j)+0.5\alpha_{3}u^{2})$ of the objective function~(\ref{num1.15}) is a convex
function with respect to $u\in[0,u_{\max}]$. Therefore, all these facts guarantee the existence of
an appropriate optimal solution for~OCP-2, which consists of the optimal control $u_{*}^{2}(t)$ and
the corresponding optimal solutions $s_{*}^{2}(t)$, $e_{*}^{2}(t)$, $i_{*}^{2}(t)$, $j_{*}^{2}(t)$,
$n_{*}^{2}(t)$ to system~(\ref{num1.18}),~(\ref{num1.19}).

To justify the existence of the optimal solution for~OCP-1, we use Theorem~4.1~(\cite{fleming})
(see also Theorem~5.2.1~(\cite{bressan})). One of the key facts in it is also a uniform estimate
for the solution $(s(t),e(t),i(t),j(t))$ of system~(\ref{num1.16}) and~(\ref{num1.17})
corresponding to any admissible control $u(t)$ from $\Omega(T)$ for all $t\in[0,T]$. This estimate
is a direct consequence of Lemma~\ref{lemma1}, which also holds for the considered control system.
Another key assumption in this theorem is the convexity of the set:
\begin{equation}\label{num0-convex}
\begin{aligned}
F(s,e,i,j)=\Bigl\{z&=(z_{1},z_{2},z_{3},z_{4},z_{5}): \\
              z_{1}&=-s\left(\beta_{1}(1-u)^{2}i+\beta_{2}(1-u)j\right), \\
              z_{2}&=s\left(\beta_{1}(1-u)^{2}i+\beta_{2}(1-u)j\right)-\gamma e, \\
              z_{3}&=\sigma_{1}\gamma e-\rho_{1}i, \; z_{4} =\sigma_{2}\gamma e-\rho_{2}j, \\
              z_{5}&\geq\alpha_{2}(e+i+j)+0.5\alpha_{3}u^{2}, \; u\in[0,u_{\max}]\Bigr\},
\end{aligned}
\end{equation}
considered for an arbitrary fixed point $(s,e,i,j)$ with positive coordinates (see
Lemma~\ref{lemma1}). For this set, such a property is established by the following lemma.

\begin{lem}\label{lemma-convex}
The set $F(s,e,i,j)$ is convex for an arbitrary fixed point $(s,e,i,j)$ with positive coordinates.
\end{lem}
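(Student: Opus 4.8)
The plan is to reduce the verification to a one‑parameter family and then lean on the convexity of the incidence term as a function of the control. First I would record the structural simplifications valid for a fixed point $(s,e,i,j)$: the coordinates $z_{3}=\sigma_{1}\gamma e-\rho_{1}i$ and $z_{4}=\sigma_{2}\gamma e-\rho_{2}j$ do not depend on $u$ at all, and the first two coordinates always satisfy $z_{1}+z_{2}=-\gamma e$. Consequently, writing $\phi(u)=\beta_{1}(1-u)^{2}i+\beta_{2}(1-u)j$, a point of $F(s,e,i,j)$ is completely described by the scalar pair $(z_{1},z_{5})$ subject to $z_{1}=-s\,\phi(u)$ and $z_{5}\ge\alpha_{2}(e+i+j)+0.5\alpha_{3}u^{2}$ for some $u\in[0,u_{\max}]$, the remaining coordinates being slaved to $z_{1}$.

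Next I would establish the two elementary analytic facts about $\phi$ on $[0,u_{\max}]$ that drive everything: since $s,i,j>0$ and $u_{\max}<1$, one has $\phi'(u)=-2\beta_{1}i(1-u)-\beta_{2}j<0$ and $\phi''(u)=2\beta_{1}i>0$, so $\phi$ is strictly decreasing and strictly convex on $[0,u_{\max}]$. Then I would check convexity directly: given $z',z''\in F(s,e,i,j)$ realised by admissible controls $u',u''$ (with $z_{5}',z_{5}''$ not necessarily at their minimal values) and $\lambda\in[0,1]$, I seek an admissible $\bar u$ realising $\lambda z'+(1-\lambda)z''$. Since $\lambda\phi(u')+(1-\lambda)\phi(u'')$ lies between $\phi(u')$ and $\phi(u'')$, the intermediate value theorem yields $\bar u$ between $u'$ and $u''$, hence $\bar u\in[0,u_{\max}]$, with $\phi(\bar u)=\lambda\phi(u')+(1-\lambda)\phi(u'')$. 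By the relations $z_{1}=-s\phi(u)$, $z_{2}=-z_{1}-\gamma e$ and the $u$‑independence of $z_{3},z_{4}$, this $\bar u$ automatically reproduces the first four coordinates of $\lambda z'+(1-\lambda)z''$.

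The one step that requires care — and the crux of the lemma — is the $z_{5}$ coordinate, i.e. showing $\bar u^{2}\le\lambda(u')^{2}+(1-\lambda)(u'')^{2}$; once this is in hand, $\lambda z_{5}'+(1-\lambda)z_{5}''\ge\alpha_{2}(e+i+j)+0.5\alpha_{3}\bigl(\lambda(u')^{2}+(1-\lambda)(u'')^{2}\bigr)\ge\alpha_{2}(e+i+j)+0.5\alpha_{3}\bar u^{2}$, so $\lambda z'+(1-\lambda)z''\in F(s,e,i,j)$ and the set is convex. The inequality $\bar u^{2}\le\lambda(u')^{2}+(1-\lambda)(u'')^{2}$ is exactly where convexity of $\phi$ pays off: convexity gives $\phi(\lambda u'+(1-\lambda)u'')\le\lambda\phi(u')+(1-\lambda)\phi(u'')=\phi(\bar u)$, and since $\phi$ is decreasing this forces $\bar u\le\lambda u'+(1-\lambda)u''$; together with $\bar u\ge0$ and the convexity of $t\mapsto t^{2}$ one gets $\bar u^{2}\le(\lambda u'+(1-\lambda)u'')^{2}\le\lambda(u')^{2}+(1-\lambda)(u'')^{2}$. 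I expect this to be the only nontrivial point — had the incidence term been concave in $u$ the bound would go the wrong way — while the rest is bookkeeping; the lemma then supplies precisely the convexity hypothesis needed for the Fleming–Rishel existence theorem invoked for OCP‑1.
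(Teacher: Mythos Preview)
Your argument is correct and is essentially the same as the paper's: both reduce to the scalar pair $(z_{1},z_{5})$, locate an admissible $\bar u$ matching the convex combination of the incidence values via the intermediate value theorem, and then use convexity plus monotonicity of the incidence function to compare $\bar u$ with $\lambda u'+(1-\lambda)u''$ and thereby bound the quadratic cost term. The only cosmetic difference is that the paper first substitutes $w=1-u$, turning $\phi$ into the increasing convex function $f(w)=\beta_{1}si\,w^{2}+\beta_{2}sj\,w$ and obtaining the equivalent inequality $\bar w\ge\lambda\widehat w+(1-\lambda)\widetilde w$, whereas you work directly with the decreasing convex $\phi(u)$; the logic is otherwise identical.
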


The proof of this lemma is in Appendix~B. Lemma~\ref{lemma-convex} and the previous fact guarantee
the existence of an appropriate optimal solution for~OCP-1, which consists of the optimal control
$u_{*}^{1}(t)$ and the corresponding optimal solutions $s_{*}^{1}(t)$, $e_{*}^{1}(t)$,
$i_{*}^{1}(t)$, $j_{*}^{1}(t)$ to system~(\ref{num1.16}),~(\ref{num1.17}).

\section{Analysis of OCP-1}
To  study  OCP-1 analytically, we apply the Pontryagin maximum principle~(\cite{pontryagin}).
Firstly, we write down the Hamiltonian of this problem:
$$
\begin{aligned}
&H(s,e,i,j,\psi_{1},\psi_{2},\psi_{3},\psi_{4},u)=-s\left(\beta_{1}(1-u)^{2}i+\beta_{2}(1-u)j\right)(\psi_{1}-\psi_{2}) \\
&-\gamma e(\psi_{2}-\sigma_{1}\psi_{3}-\sigma_{2}\psi_{4})-\rho_{1}i\psi_{3}-\rho_{2}j\psi_{4}
 -\alpha_{2}(e+i+j)-0.5\alpha_{3}u^{2},
\end{aligned}
$$
where $\psi_{1}$, $\psi_{2}$, $\psi_{3}$, $\psi_{4}$ are the adjoint variables.
The Hamiltonian satisfies:
$$
\begin{aligned}
H_{s}'(s,e,i,j,\psi_{1},\psi_{2},\psi_{3},\psi_{4},u)&=
              -\left(\beta_{1}(1-u)^{2}i+\beta_{2}(1-u)j\right)(\psi_{1}-\psi_{2}), \\
H_{e}'(s,e,i,j,\psi_{1},\psi_{2},\psi_{3},\psi_{4},u)&=
              -\gamma(\psi_{2}-\sigma_{1}\psi_{3}-\sigma_{2}\psi_{4})-\alpha_{2}, \\
H_{i}'(s,e,i,j,\psi_{1},\psi_{2},\psi_{3},\psi_{4},u)&=
              -\beta_{1}(1-u)^{2}s(\psi_{1}-\psi_{2})-\rho_{1}\psi_{3}-\alpha_{2}, \\
H_{j}'(s,e,i,j,\psi_{1},\psi_{2},\psi_{3},\psi_{4},u)&=
              -\beta_{2}(1-u)s(\psi_{1}-\psi_{2})-\rho_{2}\psi_{4}-\alpha_{2}. \\
\end{aligned}
$$

By the Pontryagin maximum principle, for the optimal control $u_{*}^{1}(t)$ and the corresponding
optimal solutions $s_{*}^{1}(t)$, $e_{*}^{1}(t)$, $i_{*}^{1}(t)$, $j_{*}^{1}(t)$ to
system~(\ref{num1.16}), there exists the vector-function
$\psi_{*}(t)=(\psi_{1}^{*}(t),\psi_{2}^{*}(t),\psi_{3}^{*}(t),\psi_{4}^{*}(t))$, such that

$\bullet$ $\psi_{*}(t)$ is a nontrivial solution of the adjoint system
\begin{equation}\label{num2.0}
\left\{
\begin{aligned}
{\psi_{1}^{*}}'(t)&=
-H_{s}'(s_{*}^{1}(t),e_{*}^{1}(t),i_{*}^{1}(t),j_{*}^{1}(t),
 \psi_{1}^{*}(t),\psi_{2}^{*}(t),\psi_{3}^{*}(t),\psi_{4}^{*}(t),u_{*}^{1}(t)) \\
&\quad=\left(\beta_{1}(1-u_{*}^{1}(t))^{2}i_{*}^{1}(t)+\beta_{2}(1-u_{*}^{1}(t))j_{*}^{1}(t)\right)
                                                      (\psi_{1}^{*}(t)-\psi_{2}^{*}(t)), \\
{\psi_{2}^{*}}'(t)&=
-H_{e}'(s_{*}^{1}(t),e_{*}^{1}(t),i_{*}^{1}(t),j_{*}^{1}(t),
 \psi_{1}^{*}(t),\psi_{2}^{*}(t),\psi_{3}^{*}(t),\psi_{4}^{*}(t),u_{*}^{1}(t)) \\
&\quad=\gamma(\psi_{2}^{*}(t)-\sigma_{1}\psi_{3}^{*}(t)-\sigma_{2}\psi_{4}^{*}(t))+\alpha_{2}, \\
{\psi_{3}^{*}}'(t)&=
-H_{i}'(s_{*}^{1}(t),e_{*}^{1}(t),i_{*}^{1}(t),j_{*}^{1}(t),
 \psi_{1}^{*}(t),\psi_{2}^{*}(t),\psi_{3}^{*}(t),\psi_{4}^{*}(t),u_{*}^{1}(t)) \\
&\quad=\beta_{1}(1-u_{*}^{1}(t))^{2}s_{*}^{1}(t)(\psi_{1}^{*}(t)-\psi_{2}^{*}(t))+\rho_{1}\psi_{3}^{*}(t)+\alpha_{2}, \\
{\psi_{4}^{*}}'(t)&=
-H_{j}'(s_{*}^{1}(t),e_{*}^{1}(t),i_{*}^{1}(t),j_{*}^{1}(t),
 \psi_{1}^{*}(t),\psi_{2}^{*}(t),\psi_{3}^{*}(t),\psi_{4}^{*}(t),u_{*}^{1}(t)) \\
&\quad=\beta_{2}(1-u_{*}^{1}(t))s_{*}^{1}(t)(\psi_{1}^{*}(t)-\psi_{2}^{*}(t))+\rho_{2}\psi_{4}^{*}(t)+\alpha_{2},
\end{aligned}
\right.\hspace{-5mm}
\end{equation}
satisfying the corresponding initial conditions
\begin{equation}\label{num2.1}
\begin{aligned}
\psi_{1}^{*}(T)&=-P_{s(T)}'= 0,          \quad\quad\; \psi_{2}^{*}(T)=-P_{e(T)}'=-\alpha_{1}, \\
\psi_{3}^{*}(T)&=-P_{i(T)}'=-\alpha_{1}, \quad        \psi_{4}^{*}(T)=-P_{j(T)}'=-\alpha_{1}. \\
\end{aligned}
\end{equation}
(Here $P$ is the terminal part of the objective function~(\ref{num1.15}).)

$\bullet$ the control $u_{*}^{1}(t)$ maximizes the Hamiltonian
\begin{equation}\label{num2.2}
H(s_{*}^{1}(t),e_{*}^{1}(t),i_{*}^{1}(t),j_{*}^{1}(t),\psi_{1}^{*}(t),\psi_{2}^{*}(t),\psi_{3}^{*}(t),\psi_{4}^{*}(t),u)
\end{equation}
with respect to $u\in[0,u_{\max}]$ for almost all $t\in[0,T]$.

With respect to $u$, this Hamiltonian is a quadratic function of the form
$$
-A_{*}(t)u^{2}+B_{*}(t)u-C_{*}(t),
$$
where
\begin{equation}\label{num2.3}
\begin{aligned}
A_{*}(t)&=\beta_{1}s_{*}^{1}(t)i_{*}^{1}(t)(\psi_{1}^{*}(t)-\psi_{2}^{*}(t))+0.5\alpha_{3}, \\
B_{*}(t)&=s_{*}^{1}(t)\left(2\beta_{1}i_{*}^{1}(t)+\beta_{2}j_{*}^{1}(t)\right)(\psi_{1}^{*}(t)-\psi_{2}^{*}(t)), \\
C_{*}(t)&=s_{*}^{1}(t)\left(\beta_{1}i_{*}^{1}(t)+\beta_{2}j_{*}^{1}(t)\right)(\psi_{1}^{*}(t)-\psi_{2}^{*}(t)) \\
        &\quad+\gamma e_{*}^{1}(t)(\psi_{2}^{*}(t)-\sigma_{1}\psi_{3}^{*}(t)-\sigma_{2}\psi_{4}^{*}(t)) \\
        &\quad+\rho_{1}i_{*}^{1}(t)\psi_{3}^{*}(t)+\rho_{2}j_{*}^{1}(t)\psi_{4}^{*}(t)
              +(e_{*}^{1}(t)+i_{*}^{1}(t)+j_{*}^{1}(t)).
\end{aligned}
\end{equation}
Therefore the following relationship holds:
\begin{equation}\label{num2.4}
u_{*}^{1}(t)=
\left\{
\begin{array}{l}
\begin{cases}
u_{\max}           &, \; \mbox{if} \quad \lambda_{*}^{1}(t)>u_{\max} \\
\lambda_{*}^{1}(t) &, \; \mbox{if} \quad 0\leq\lambda_{*}^{1}(t)\leq u_{\max} \\
0                  &, \; \mbox{if} \quad \lambda_{*}^{1}(t)<0
\end{cases}\qquad\qquad\;\;, \; \mbox{if} \quad A_{*}(t)>0, \\
\quad0\qquad\qquad\qquad\qquad\qquad\qquad\qquad\qquad\quad\;, \; \mbox{if} \quad A_{*}(t)=0, \\
\begin{cases}
u_{\max}                       &, \; \mbox{if} \quad \lambda_{*}^{1}(t)<0.5u_{\max} \\
\mbox{any}\;u\in\{0;u_{\max}\} &, \; \mbox{if} \quad \lambda_{*}^{1}(t)=0.5u_{\max}\\
0                              &, \; \mbox{if} \quad \lambda_{*}^{1}(t)>0.5u_{\max}
\end{cases}  , \; \mbox{if} \quad A_{*}(t)<0.
\end{array}
\right.\hspace{-5mm}
\end{equation}
Here function $\lambda_{*}^{1}(t)$ is the so-called indicator function~\cite{schattler}, which for
$A_{*}(t)\neq0$ is defined as
\begin{equation}\label{num2.5}
\lambda_{*}^{1}(t)=0.5A_{*}^{-1}(t)B_{*}(t).
\end{equation}
It determines the behavior of the optimal control $u_{*}^{1}(t)$ according to~(\ref{num2.4}).

By~(\ref{num2.1}),~(\ref{num2.3}) and~(\ref{num2.5}) we can see that $A_{*}(T)>0$ and $B_{*}(T)>0$,
and, therefore, inequality $\lambda_{*}^{1}(T)>0$ holds. According to~(\ref{num2.4}), this means
that the following lemma is true.

\begin{lem}\label{lemma2}
At $t=T$, the optimal control $u_{*}^{1}(t)$ is positive and takes either value
$\lambda_{*}^{1}(T)$ or value $u_{\max}$.
\end{lem}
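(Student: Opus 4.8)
The plan is to read off the value of $u_{*}^{1}(T)$ directly from the pointwise characterization~(\ref{num2.4}) of the optimal control. To do so, I would evaluate the switching data $A_{*}(t)$, $B_{*}(t)$ of~(\ref{num2.3}) and the indicator function $\lambda_{*}^{1}(t)$ of~(\ref{num2.5}) at the terminal time $t=T$, using the transversality (terminal) conditions~(\ref{num2.1}) together with the positivity of the optimal state trajectory supplied by Lemma~\ref{lemma1}.

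First I would substitute $\psi_{1}^{*}(T)=0$ and $\psi_{2}^{*}(T)=\psi_{3}^{*}(T)=\psi_{4}^{*}(T)=-\alpha_{1}$ from~(\ref{num2.1}) into~(\ref{num2.3}). Since then $\psi_{1}^{*}(T)-\psi_{2}^{*}(T)=\alpha_{1}>0$, this yields
$$
A_{*}(T)=\beta_{1}s_{*}^{1}(T)i_{*}^{1}(T)\,\alpha_{1}+0.5\alpha_{3},\qquad
B_{*}(T)=s_{*}^{1}(T)\bigl(2\beta_{1}i_{*}^{1}(T)+\beta_{2}j_{*}^{1}(T)\bigr)\alpha_{1}.
$$
By Lemma~\ref{lemma1} (which also applies to system~(\ref{num1.16})) the quantities $s_{*}^{1}(T)$, $i_{*}^{1}(T)$, $j_{*}^{1}(T)$ are strictly positive, and since $\beta_{1},\beta_{2}>0$ and $\alpha_{3}>0$, it follows that $A_{*}(T)>0$ and $B_{*}(T)>0$. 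Consequently, by~(\ref{num2.5}), $\lambda_{*}^{1}(T)=0.5A_{*}^{-1}(T)B_{*}(T)>0$.

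It then remains to feed this into~(\ref{num2.4}). Because $A_{*}(T)>0$, the value $u_{*}^{1}(T)$ is given by the first branch of~(\ref{num2.4}); and because $\lambda_{*}^{1}(T)>0$, the alternative $\lambda_{*}^{1}(T)<0$ (which would force $u_{*}^{1}(T)=0$) is excluded. Hence either $0<\lambda_{*}^{1}(T)\le u_{\max}$, so that $u_{*}^{1}(T)=\lambda_{*}^{1}(T)>0$, or $\lambda_{*}^{1}(T)>u_{\max}$, so that $u_{*}^{1}(T)=u_{\max}>0$; in both cases $u_{*}^{1}(T)$ is positive and equals either $\lambda_{*}^{1}(T)$ or $u_{\max}$, which is the assertion. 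I expect no real obstacle here: the argument is a direct substitution into~(\ref{num2.3})--(\ref{num2.5}). The only points requiring a little care are (i) interpreting ``at $t=T$'' correctly --- the optimal states and the adjoint vector-function $\psi_{*}(t)$ are (absolutely) continuous, so $A_{*}(t)$, $B_{*}(t)$ are continuous and $A_{*}(t)>0$ on a left neighbourhood of $T$, where $\lambda_{*}^{1}(t)$ is continuous and the Hamiltonian~(\ref{num2.2}) has a unique maximizer, so $u_{*}^{1}(T)$ is unambiguous --- and (ii) the sign bookkeeping: the conclusion genuinely uses that the terminal weight is strictly positive, $\alpha_{1}>0$, since this is what makes $\psi_{1}^{*}(T)-\psi_{2}^{*}(T)>0$ and hence $B_{*}(T)>0$ (if $\alpha_{1}=0$ one would instead obtain $\lambda_{*}^{1}(T)=0$ and $u_{*}^{1}(T)=0$).
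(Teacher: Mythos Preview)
Your proposal is correct and follows essentially the same route as the paper: the paper's argument is the one-line observation that, by~(\ref{num2.1}),~(\ref{num2.3}) and~(\ref{num2.5}), one has $A_{*}(T)>0$ and $B_{*}(T)>0$, hence $\lambda_{*}^{1}(T)>0$, and then~(\ref{num2.4}) gives the conclusion. Your additional remark that the strict positivity $\alpha_{1}>0$ is actually needed (the paper only assumes $\alpha_{1}\geq0$ in general) is a valid and useful caveat that the paper does not make explicit.
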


Now we can see that the following lemma is valid.

\begin{lem}\label{lemma3}
Let us assume that at moment $t_{0}\in[0,T)$, the inequality $A_{*}(t_{0})<0$ holds. Then the
inequality $\lambda_{*}^{1}(t_{0})>0.5u_{\max}$ holds as well.
\end{lem}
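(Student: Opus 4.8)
The plan is to argue directly from the explicit formulas~(\ref{num2.3}) for $A_*(t)$, $B_*(t)$ and from the definition~(\ref{num2.5}) of the indicator function, carefully tracking signs and exploiting the positivity of the state components $s_*^1(t)$, $e_*^1(t)$, $i_*^1(t)$, $j_*^1(t)$ on $[0,T]$ provided by Lemma~\ref{lemma1}. For brevity I would write $\Delta_*(t)=\psi_1^*(t)-\psi_2^*(t)$. The first step is to extract a sign from the hypothesis: since $\beta_1>0$, $s_*^1(t_0)>0$, $i_*^1(t_0)>0$ and $0.5\alpha_3>0$, the assumed inequality $A_*(t_0)<0$ forces $\beta_1 s_*^1(t_0)i_*^1(t_0)\Delta_*(t_0)<-0.5\alpha_3<0$, and hence in particular $\Delta_*(t_0)<0$.

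Next I would compare $B_*(t_0)$ with $2A_*(t_0)$. A direct computation from~(\ref{num2.3}) gives $B_*(t_0)-2A_*(t_0)=\beta_2 s_*^1(t_0)j_*^1(t_0)\Delta_*(t_0)-\alpha_3$. Since $\beta_2>0$, $s_*^1(t_0)>0$ and $j_*^1(t_0)>0$ by Lemma~\ref{lemma1}, and $\Delta_*(t_0)<0$ by the first step, both terms on the right-hand side are negative, so $B_*(t_0)<2A_*(t_0)<0$ (the second inequality being the hypothesis itself, written as $2A_*(t_0)<0$).

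Finally, I would read off the conclusion. As $A_*(t_0)\neq 0$, formula~(\ref{num2.5}) applies and $\lambda_*^1(t_0)=B_*(t_0)/(2A_*(t_0))$. Dividing the strict inequality $B_*(t_0)<2A_*(t_0)<0$ by the negative quantity $2A_*(t_0)$ reverses it, so $\lambda_*^1(t_0)>1$. Since $u_{\max}<1$ by~(\ref{num1.10}), it follows that $\lambda_*^1(t_0)>1>0.5>0.5u_{\max}$, which is in fact stronger than the asserted inequality $\lambda_*^1(t_0)>0.5u_{\max}$.

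There is no serious obstacle in this argument; it is a short sign-chasing calculation. The one point deserving attention is the \emph{strictness} of the estimate $B_*(t_0)<2A_*(t_0)$, since it is precisely this strict inequality that upgrades $\lambda_*^1(t_0)>0.5u_{\max}$ to $\lambda_*^1(t_0)>1$ and renders the value of $u_{\max}$ irrelevant. Strictness is guaranteed by the positivity of $s_*^1(t_0)$, $j_*^1(t_0)$ (Lemma~\ref{lemma1}) together with the positivity of the weighting coefficient $\alpha_3$, so one should make sure to invoke Lemma~\ref{lemma1} explicitly at that step.
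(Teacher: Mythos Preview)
Your argument is correct. The key computation $B_*(t_0)-2A_*(t_0)=\beta_{2}s_*^1(t_0)j_*^1(t_0)\Delta_*(t_0)-\alpha_{3}$ follows directly from~(\ref{num2.3}), and the sign chain $B_*(t_0)<2A_*(t_0)<0$ together with the division step is clean.

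Your route differs from the paper's in organization rather than in substance. The paper argues by contradiction: assuming $\lambda_*^1(t_0)\leq 0.5u_{\max}$, it splits into the two cases $B_*(t_0)\geq 0$ and $B_*(t_0)<0$, disposing of the first by showing it forces $A_*(t_0)>0$, and of the second by rewriting the assumed inequality as
\[
s_*^1(t_0)\bigl(\beta_1(2-u_{\max})i_*^1(t_0)+\beta_2 j_*^1(t_0)\bigr)\bigl(\psi_1^*(t_0)-\psi_2^*(t_0)\bigr)\geq 0.5\alpha_3 u_{\max},
\]
whose left-hand side is negative while the right-hand side is positive. Your direct approach avoids the case split entirely by comparing $B_*(t_0)$ with $2A_*(t_0)$ rather than with $u_{\max}A_*(t_0)$, and it buys you the sharper conclusion $\lambda_*^1(t_0)>1$ (independent of $u_{\max}$), whereas the paper stops at $\lambda_*^1(t_0)>0.5u_{\max}$. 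Both arguments ultimately rest on the same two ingredients: the positivity of the optimal state components from Lemma~\ref{lemma1} and the positivity of $\alpha_3$.
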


The proof of this lemma is in Appendix~C. The following important corollary can be drawn from
Lemma~\ref{lemma3}.

\begin{cor}\label{corollary1}
Relationship~(\ref{num2.4}) can be rewritten as
\begin{equation}\label{num2.9}
u_{*}^{1}(t)=
\left\{
\begin{array}{l}
\begin{cases}
u_{\max}           &, \; \mbox{if} \quad \lambda_{*}^{1}(t)>u_{\max} \\
\lambda_{*}^{1}(t) &, \; \mbox{if} \quad 0\leq\lambda_{*}^{1}(t)\leq u_{\max} \\
0                  &, \; \mbox{if} \quad \lambda_{*}^{1}(t)\leq0
\end{cases}, \; \mbox{if} \quad A_{*}(t)>0, \\
\quad0\qquad\qquad\qquad\qquad\qquad\qquad\;\;, \; \mbox{if} \quad A_{*}(t)\leq0.
\end{array}
\right.
\end{equation}
\end{cor}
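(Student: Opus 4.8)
The plan is to obtain Corollary~\ref{corollary1} directly from Lemma~\ref{lemma3}, combined with the observation made just before Lemma~\ref{lemma2} that $A_*(T)>0$. The entire argument is a case analysis on the sign of $A_*(t)$, comparing the three-branch formula~(\ref{num2.4}) with the claimed two-branch formula~(\ref{num2.9}); I do not expect any genuine analytic difficulty, the substance being entirely contained in Lemma~\ref{lemma3}.

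First I would dispose of the branch $A_*(t)<0$ in~(\ref{num2.4}). Because $A_*(T)>0$, this branch can only be active for $t_0\in[0,T)$, which is precisely the range in which Lemma~\ref{lemma3} applies. That lemma then gives $\lambda_*^1(t_0)>0.5u_{\max}$, so only the third sub-case of the $A_*(t)<0$ branch in~(\ref{num2.4}) can occur, forcing $u_*^1(t_0)=0$; the first two sub-cases (where $\lambda_*^1(t)\le 0.5u_{\max}$) are never realised. Hence on $\{t:A_*(t)<0\}$ the optimal control is identically $0$, which is the same value prescribed on $\{t:A_*(t)=0\}$. The two branches therefore coalesce into the single rule $u_*^1(t)=0$ whenever $A_*(t)\le 0$, i.e.\ the second line of~(\ref{num2.9}).

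Next I would verify that the branch $A_*(t)>0$ is unchanged between~(\ref{num2.4}) and~(\ref{num2.9}). The only formal difference there is the treatment of the boundary value $\lambda_*^1(t)=0$: in~(\ref{num2.4}) it is covered by the middle sub-case, giving $u_*^1(t)=\lambda_*^1(t)=0$, while in~(\ref{num2.9}) it is absorbed into the last sub-case, $\lambda_*^1(t)\le 0$, again giving $u_*^1(t)=0$. Since both prescriptions assign the value $0$ at $\lambda_*^1(t)=0$, the two formulas define the same function of $t$ on $\{t:A_*(t)>0\}$. Merging this with the reduced $A_*(t)\le 0$ branch yields exactly~(\ref{num2.9}), which completes the proof.

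The only point that requires a moment's care — and the closest thing to an obstacle — is that the reduction of the $A_*(t)<0$ branch is legitimate only after checking that such $t$ necessarily lie in $[0,T)$, so that Lemma~\ref{lemma3} (stated for $t_0\in[0,T)$) is applicable; this is guaranteed by $A_*(T)>0$, established from the terminal conditions~(\ref{num2.1}) via the expressions~(\ref{num2.3}).
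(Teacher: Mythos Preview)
Your proposal is correct and is essentially the argument the paper has in mind: the paper states Corollary~\ref{corollary1} as an immediate consequence of Lemma~\ref{lemma3} without giving a separate proof, and your case analysis (eliminating the first two sub-cases of the $A_{*}(t)<0$ branch via Lemma~\ref{lemma3}, then merging with the $A_{*}(t)=0$ branch) is exactly the intended route. The care you take over $A_{*}(T)>0$ to ensure Lemma~\ref{lemma3} is applicable at any $t$ with $A_{*}(t)<0$ is a nice point that the paper leaves implicit.
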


Equality~(\ref{num2.9}) shows that for all values of $t\in[0,T]$, the maximum of
Hamiltonian~(\ref{num2.2}) is reached with a unique value $u=u_{*}^{1}(t)$. Therefore the following
lemma immediately follows from Theorem~6.1 in~\cite{fleming}.

\begin{lem}\label{lemma4}
The optimal control $u_{*}^{1}(t)$ is a continuous function on the interval $[0,T]$.
\end{lem}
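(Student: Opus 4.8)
The plan is to show that the map $t \mapsto u_*^1(t)$ given by the closed formula \eqref{num2.9} inherits continuity from the continuity of the data entering it. First I would observe that the state trajectory $(s_*^1(t), e_*^1(t), i_*^1(t), j_*^1(t))$ is continuous on $[0,T]$ by Lemma \ref{lemma1} (absolutely continuous, being a solution of the ODE system), and likewise the adjoint vector $\psi_*(t) = (\psi_1^*(t),\psi_2^*(t),\psi_3^*(t),\psi_4^*(t))$ is continuous on $[0,T]$ as a solution of the linear adjoint system \eqref{num2.0} with terminal data \eqref{num2.1}. Consequently the functions $A_*(t)$ and $B_*(t)$ defined in \eqref{num2.3} are continuous on $[0,T]$, being polynomial combinations of continuous functions. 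This is the routine part.

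The substantive point is that \eqref{num2.9} exhibits $u_*^1(t)$ as a \emph{single} continuous selection rule, with no jump across the set where $A_*(t)$ changes sign — and this is exactly what Corollary \ref{corollary1} buys us. The key step is to rewrite \eqref{num2.9} without a case split on the sign of $A_*$: I would argue that for every $t\in[0,T]$,
\begin{equation}\label{num2.10}
u_*^1(t) = \min\Bigl\{\,u_{\max},\ \max\{\,0,\ \tfrac{1}{2}B_*(t)\,[A_*(t)]_+^{-1}\,\}\,\Bigr\},
\end{equation}
with the convention that when $A_*(t)\le 0$ the inner quantity is $+\infty$ if $B_*(t)>0$ and the whole expression collapses to $0$ otherwise — more cleanly, I would note that, by Lemma \ref{lemma3}, on the (relatively closed) set $\{A_*(t)\le 0\}$ one has $u_*^1(t)=0$, while on the open set $\{A_*(t)>0\}$ the control is the continuous composition $\mathrm{clip}_{[0,u_{\max}]}\circ\lambda_*^1$ where $\lambda_*^1 = \tfrac12 B_*/A_*$ is continuous there. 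Continuity on the interior of each region is then immediate since $\mathrm{clip}_{[0,u_{\max}]}(x) = \min\{u_{\max},\max\{0,x\}\}$ is continuous.

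The main obstacle — and the only place where care is genuinely needed — is continuity at a boundary point $t_0$ with $A_*(t_0)=0$, where $\lambda_*^1$ itself may blow up. Here I would proceed as follows: suppose $t_n\to t_0$ with $A_*(t_n)>0$; by continuity $A_*(t_n)\to 0^+$, so if $B_*(t_0)>0$ then $\lambda_*^1(t_n) = \tfrac12 B_*(t_n)/A_*(t_n)\to +\infty$, whence $u_*^1(t_n)=u_{\max}$ for all large $n$; but then by the analysis underlying Lemma \ref{lemma3} one checks $B_*(t_0)>0$ forces, in the limit, the same value at $t_0$ (indeed $\lambda_*^1(t_0^-)$ exceeds $u_{\max}$, so \eqref{num2.9} gives $u_*^1(t_0)=u_{\max}$ as well when $A_*(t_0)>0$, and gives $0$ only if $B_*(t_0)\le 0$, contradicting $B_*(t_0)>0$ — so this case cannot actually produce a boundary crossing). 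If instead $B_*(t_0)\le 0$, then for $t$ near $t_0$ with $A_*(t)>0$ we have $\lambda_*^1(t)\le 0$ or $\lambda_*^1(t)$ small, so $u_*^1(t)\to 0 = u_*^1(t_0)$, using $u_*^1(t_0)=0$ from the $A_*\le 0$ branch together with Lemma \ref{lemma3} (which rules out the problematic jump to $u_{\max}$). The case $t_n\to t_0$ with $A_*(t_n)\le 0$ is trivial since then $u_*^1(t_n)=0=u_*^1(t_0)$. Since these exhaust all approaches to $t_0$, continuity at $t_0$ follows, and hence on all of $[0,T]$. I would remark that this is precisely the content of Theorem 6.1 in \cite{fleming}: once the maximized Hamiltonian has a unique maximizer depending continuously on the state–adjoint pair (which \eqref{num2.9}, read as \eqref{num2.10}, establishes), the optimal control is continuous; so an alternative and shorter route is simply to verify the hypotheses of that theorem and cite it, which is the line the paper takes.
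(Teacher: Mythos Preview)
Your proposal is correct, and you already identify the paper's actual route at the very end: the paper does nothing beyond observing that Corollary~\ref{corollary1} gives a \emph{unique} maximizer of the Hamiltonian for every $t$ and then invoking Theorem~6.1 of \cite{fleming}; that is the entire proof.

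Your longer direct argument is a legitimate alternative, but the boundary analysis can be tightened. The key fact you need at a point $t_0$ with $A_*(t_0)=0$ is simply that $B_*(t_0)<0$ strictly, and this follows in one line from the formulas~\eqref{num2.3}: $A_*(t_0)\le 0$ forces $\psi_1^*(t_0)-\psi_2^*(t_0)<0$ (since $\beta_1 s_*^1 i_*^1>0$ and $\alpha_3>0$), which in turn forces $B_*(t_0)<0$. With that in hand, for any sequence $t_n\to t_0$ with $A_*(t_n)>0$ one has $\lambda_*^1(t_n)\to -\infty$ and hence $u_*^1(t_n)=0$ eventually, matching $u_*^1(t_0)=0$. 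Your handling of the case $B_*(t_0)>0$ is circuitous (it simply cannot occur, by the observation above), and the case $B_*(t_0)=0$ that you hedge with ``or $\lambda_*^1(t)$ small'' is likewise impossible; invoking ``the analysis underlying Lemma~\ref{lemma3}'' here is indirect when the exclusion is immediate from~\eqref{num2.3}. The attempted unified formula~\eqref{num2.10} with its ad hoc conventions does not really help and can be dropped. None of this is a genuine error, but the paper's one-line citation is cleaner and your own final sentence already concedes the point.
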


\begin{rem}\label{remark1}
{\rm  Systems~(\ref{num1.16}) and~(\ref{num2.0}) with corresponding initial
conditions~(\ref{num1.17}) and~(\ref{num2.1}), and equality~(\ref{num2.9}) together
with~(\ref{num2.5}) form the two-point boundary value problem for the maximum principle. The
optimal control $u_{*}^{1}(t)$ satisfies this boundary value problem together with the
corresponding optimal solutions $s_{*}^{1}(t)$, $e_{*}^{1}(t)$, $i_{*}^{1}(t)$, $j_{*}^{1}(t)$ for
system~(\ref{num1.16}). Moreover, arguing as in~\cite{jung,mateus,silva}, due to the boundedness of
the state and adjoint variables and the Lipschitz properties of systems~(\ref{num1.16})
and~(\ref{num2.0}) and relationship~(\ref{num2.9}), one can establish the uniqueness of this
control.}
\end{rem}

\section{Analytical study of OCP-2}
To analytically study OCP-2, we also use the Pontryagin maximum principle. The Hamiltonian of OCP-2
is
$$
\begin{aligned}
H(s,e,i,j,n,&\phi_{1},\phi_{2},\phi_{3},\phi_{4},\phi_{5},u)=-sn^{-1}\left(\beta_{1}(1-u)i+\beta_{2}j\right)
                                                                          (\phi_{1}-\phi_{2}) \\
&-\gamma e(\phi_{2}-\sigma_{1}\phi_{3}-\sigma_{2}\phi_{4})-\rho_{1}i\phi_{3}-\rho_{2}j(\phi_{4}+q\phi_{5}) \\
&-\alpha_{2}(e+i+j)-0.5\alpha_{3}u^{2},
\end{aligned}
$$
where $\phi_{1}$, $\phi_{2}$, $\phi_{3}$, $\phi_{4}$, $\phi_{5}$ are the adjoint variables.
The Hamiltonian satisfies
$$
\begin{aligned}
H_{s}'(s,e,i,j,n,\phi_{1},\phi_{2},\phi_{3},\phi_{4},\phi_{5},u)&=
                -n^{-1}\left(\beta_{1}(1-u)i+\beta_{2}j\right)(\phi_{1}-\phi_{2}), \\
H_{e}'(s,e,i,j,n,\phi_{1},\phi_{2},\phi_{3},\phi_{4},\phi_{5},u)&=
                -\gamma(\phi_{2}-\sigma_{1}\phi_{3}-\sigma_{2}\phi_{4})-\alpha_{2}, \\
H_{i}'(s,e,i,j,n,\phi_{1},\phi_{2},\phi_{3},\phi_{4},\phi_{5},u)&=
                -\beta_{1}(1-u)sn^{-1}(\phi_{1}-\phi_{2})-\rho_{1}\phi_{3}-\alpha_{2}, \\
H_{j}'(s,e,i,j,n,\phi_{1},\phi_{2},\phi_{3},\phi_{4},\phi_{5},u)&=
                -\beta_{2}sn^{-1}(\phi_{1}-\phi_{2})-\rho_{2}(\phi_{4} + q\phi_{5})-\alpha_{2}, \\
H_{n}'(s,e,i,j,n,\phi_{1},\phi_{2},\phi_{3},\phi_{4},\phi_{5},u)&=
                 sn^{-2}\left(\beta_{1}(1-u)i+\beta_{2}j\right)(\phi_{1}-\phi_{2}), \\
H_{u}'(s,e,i,j,n,\phi_{1},\phi_{2},\phi_{3},\phi_{4},\phi_{5},u)&=
                \beta_{1}sn^{-1}i(\phi_{1}-\phi_{2})-\alpha_{3}u.
\end{aligned}
$$

By the Pontryagin maximum principle, for the optimal control $u_{*}^{2}(t)$ and the corresponding
optimal solutions $s_{*}^{2}(t)$, $e_{*}^{2}(t)$, $i_{*}^{2}(t)$, $j_{*}^{2}(t)$, $n_{*}^{2}(t)$ to
system~(\ref{num1.18}), there exists the vector-function
$\phi_{*}(t)=(\phi_{1}^{*}(t),\phi_{2}^{*}(t),\phi_{3}^{*}(t),\phi_{4}^{*}(t),\phi_{5}^{*}(t))$,
such that:

$\bullet$ $\phi_{*}(t)$ is a nontrivial solution of the adjoint system:
\begin{equation}\label{num3.1}
\left\{
\begin{aligned}
{\phi_{1}^{*}}'(t)&=
-H_{s}'(s_{*}^{2}(t),e_{*}^{2}(t),i_{*}^{2}(t),j_{*}^{2}(t),n_{*}^{2}(t),
 \phi_{1}^{*}(t),\phi_{2}^{*}(t),\phi_{3}^{*}(t),\phi_{4}^{*}(t),\phi_{5}^{*}(t),u_{*}^{2}(t)) \\
&\quad=\left(n_{*}^{2}(t)\right)^{-1}\left(\beta_{1}(1-u_{*}^{2}(t))i_{*}^{2}(t)+\beta_{2}j_{*}^{2}(t)\right)
                                                            (\phi_{1}^{*}(t)-\phi_{2}^{*}(t)), \\
{\phi_{2}^{*}}'(t)&=
-H_{e}'(s_{*}^{2}(t),e_{*}^{2}(t),i_{*}^{2}(t),j_{*}^{2}(t),n_{*}^{2}(t),
 \phi_{1}^{*}(t),\phi_{2}^{*}(t),\phi_{3}^{*}(t),\phi_{4}^{*}(t),\phi_{5}^{*}(t),u_{*}^{2}(t)) \\
&\quad=\gamma(\phi_{2}^{*}(t)-\sigma_{1}\phi_{3}^{*}(t)-\sigma_{2}\phi_{4}^{*}(t))+\alpha_{2}, \\
{\phi_{3}^{*}}'(t)&=
-H_{i}'(s_{*}^{2}(t),e_{*}^{2}(t),i_{*}^{2}(t),j_{*}^{2}(t),n_{*}^{2}(t),
 \phi_{1}^{*}(t),\phi_{2}^{*}(t),\phi_{3}^{*}(t),\phi_{4}^{*}(t),\phi_{5}^{*}(t),u_{*}^{2}(t)) \\
&\quad=\beta_{1}(1-u_{*}^{2}(t))s_{*}^{2}(t)\left(n_{*}^{2}(t)\right)^{-1}(\phi_{1}^{*}(t)-\phi_{2}^{*}(t))
 + \rho_{1}\phi_{3}^{*}(t)+\alpha_{2}, \\
{\phi_{4}^{*}}'(t)&=
-H_{j}'(s_{*}^{2}(t),e_{*}^{2}(t),i_{*}^{2}(t),j_{*}^{2}(t),n_{*}^{2}(t),
 \phi_{1}^{*}(t),\phi_{2}^{*}(t),\phi_{3}^{*}(t),\phi_{4}^{*}(t),\phi_{5}^{*}(t),u_{*}^{2}(t)) \\
&\quad=\beta_{2}s_{*}^{2}(t)\left(n_{*}^{2}(t)\right)^{-1}(\phi_{1}^{*}(t)-\phi_{2}^{*}(t))
 +\rho_{2}(\phi_{4}^{*}(t)+q\phi_{5}^{*}(t))+\alpha_{2}, \\
{\phi_{5}^{*}}'(t)&=
-H_{n}'(s_{*}^{2}(t),e_{*}^{2}(t),i_{*}^{2}(t),j_{*}^{2}(t),n_{*}^{2}(t),
 \phi_{1}^{*}(t),\phi_{2}^{*}(t),\phi_{3}^{*}(t),\phi_{4}^{*}(t),\phi_{5}^{*}(t),u_{*}^{2}(t)) \\
&\quad=-s_{*}^{2}(t)\left(n_{*}^{2}(t)\right)^{-2}\left(\beta_{1}(1-u_{*}^{2}(t))i_{*}^{2}(t)+
 \beta_{2}j_{*}^{2}(t)\right)(\phi_{1}^{*}(t)-\phi_{2}^{*}(t)), \\
\end{aligned}
\right.
\end{equation}
satisfying the initial conditions
\begin{equation}\label{num3.2}
\begin{aligned}
\phi_{1}^{*}(T)&=-P_{s(T)}'= 0,          \quad\quad\; \phi_{2}^{*}(T)=-P_{e(T)}'=-\alpha_{1}, \\
\phi_{3}^{*}(T)&=-P_{i(T)}'=-\alpha_{1}, \quad        \phi_{4}^{*}(T)=-P_{j(T)}'=-\alpha_{1}, \\
\phi_{5}^{*}(T)&=-P_{n(T)}'= 0.
\end{aligned}
\end{equation}

$\bullet$ the control $u_{*}^{2}(t)$ maximizes the Hamiltonian
\begin{equation}\label{num3.3}
H(s_{*}^{2}(t),e_{*}^{2}(t),i_{*}^{2}(t),j_{*}^{2}(t),n_{*}^{2}(t),
\phi_{1}^{*}(t),\phi_{2}^{*}(t),\phi_{3}^{*}(t),\phi_{4}^{*}(t),\phi_{5}^{*}(t),u)
\end{equation}
with respect to $u\in[0,u_{\max}]$ for almost all $t\in[0,T]$. Therefore the following equalities
hold:
\begin{equation}\label{num3.4}
u_{*}^{2}(t) =
\begin{cases}
u_{\max}           &, \; \mbox{if} \quad \lambda_{*}^{2}(t)>u_{\max}, \\
\lambda_{*}^{2}(t) &, \; \mbox{if} \quad 0\leq\lambda_{*}^{2}(t)\leq u_{\max}, \\
0                  &, \; \mbox{if} \quad \lambda_{*}^{2}(t)<0,
\end{cases}
\end{equation}
where function
\begin{equation}\label{num3.5}
\lambda_{*}^{2}(t)=\alpha_{3}^{-1}\beta_{1}s_{*}^{2}(t)i_{*}^{2}(t)\left(n_{*}^{2}(t)\right)^{-1}
                  (\phi_{1}^{*}(t)-\phi_{2}^{*}(t))
\end{equation}
is the indicator function that determines the behavior of the optimal control $u_{*}^{2}(t)$
according to~(\ref{num3.4}).

Now,~(\ref{num3.2}) and~(\ref{num3.5}) yield
$$
\lambda_{*}^{2}(T)=\alpha_{1}\alpha_{3}^{-1}\beta_{1}s_{*}^{2}(T)i_{*}^{2}(T)\left(n_{*}^{2}(T)\right)^{-1},
$$
which by Lemma~\ref{lemma1}, implies the inequality $\lambda_{*}^{2}(T)>0$. According
to~(\ref{num3.4}), this implies that the following lemma that is similar to Lemma~\ref{lemma2} is
true.

\begin{lem}\label{lemma5}
At $t=T$, the optimal control $u_{*}^{2}(t)$ is positive and takes either value
$\lambda_{*}^{2}(T)$, or value $u_{\max}$.
\end{lem}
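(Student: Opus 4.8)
The plan is to mirror the argument already used for Lemma~\ref{lemma2} in OCP-1, transferring it to the indicator function $\lambda_{*}^{2}(t)$ given by~(\ref{num3.5}). The key observation is that, unlike OCP-1, here the Hamiltonian~(\ref{num3.3}) is strictly concave in $u$ (the coefficient of $u^{2}$ is $-0.5\alpha_{3}<0$ and $\alpha_{3}>0$ by assumption), so the switching structure~(\ref{num3.4}) involves only the three standard cases, and the whole argument reduces to evaluating $\lambda_{*}^{2}$ at the single point $t=T$ and invoking~(\ref{num3.4}).

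\begin{proof}
From the terminal conditions~(\ref{num3.2}) we have $\phi_{1}^{*}(T)=0$ and $\phi_{2}^{*}(T)=-\alpha_{1}$, hence $\phi_{1}^{*}(T)-\phi_{2}^{*}(T)=\alpha_{1}$. Substituting this into the defining formula~(\ref{num3.5}) of the indicator function gives
\begin{equation*}
\lambda_{*}^{2}(T)=\alpha_{1}\alpha_{3}^{-1}\beta_{1}\,s_{*}^{2}(T)\,i_{*}^{2}(T)\,\bigl(n_{*}^{2}(T)\bigr)^{-1}.
\end{equation*}
By Lemma~\ref{lemma1}, which applies equally to the control system~(\ref{num1.18})--(\ref{num1.19}), the optimal trajectory satisfies $s_{*}^{2}(T)>0$, $i_{*}^{2}(T)>0$ and $n_{*}^{2}(T)>0$ (in fact $0<n_{*}^{2}(T)\le 1$). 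Since $\alpha_{1}\ge 0$, $\alpha_{3}>0$ and $\beta_{1}>0$, every factor in the expression above is nonnegative and all but possibly $\alpha_{1}$ are strictly positive; because in OCP-2 we take $\alpha_{1}>0$, it follows that $\lambda_{*}^{2}(T)>0$.

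It remains to read off the value of the optimal control at $t=T$ from the switching relation~(\ref{num3.4}). Since $\lambda_{*}^{2}(T)>0$, the third case ($\lambda_{*}^{2}(t)<0$, giving $u=0$) is excluded. Thus either $0<\lambda_{*}^{2}(T)\le u_{\max}$, in which case $u_{*}^{2}(T)=\lambda_{*}^{2}(T)>0$, or $\lambda_{*}^{2}(T)>u_{\max}$, in which case $u_{*}^{2}(T)=u_{\max}>0$. In both cases $u_{*}^{2}(T)$ is positive and equals either $\lambda_{*}^{2}(T)$ or $u_{\max}$, which is the assertion of the lemma.
\end{proof}

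There is essentially no obstacle here: the statement is an immediate consequence of the sign of the adjoint jump at the terminal time together with the positivity guaranteed by Lemma~\ref{lemma1}. The only point worth a moment's care is that the argument relies on $\alpha_{1}>0$ (so that $\lambda_{*}^{2}(T)$ is strictly, not merely weakly, positive); this is consistent with the formulation of OCP-2, where $\alpha_{1}$ is one of the nonnegative weighting coefficients and is taken positive in the cases where the terminal term is present. If one only assumed $\alpha_{1}\ge 0$, the conclusion would weaken to $\lambda_{*}^{2}(T)\ge 0$ and hence $u_{*}^{2}(T)\ge 0$; the stated lemma implicitly uses the strict inequality, exactly as Lemma~\ref{lemma2} does for OCP-1.
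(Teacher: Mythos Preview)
Your proof is correct and follows exactly the same route as the paper: substitute the terminal conditions~(\ref{num3.2}) into~(\ref{num3.5}) to obtain $\lambda_{*}^{2}(T)=\alpha_{1}\alpha_{3}^{-1}\beta_{1}s_{*}^{2}(T)i_{*}^{2}(T)\bigl(n_{*}^{2}(T)\bigr)^{-1}$, invoke Lemma~\ref{lemma1} for strict positivity of the state components, and read off the conclusion from~(\ref{num3.4}). Your closing remark about needing $\alpha_{1}>0$ is a fair observation---the paper formally allows $\alpha_{1}\ge 0$ but, like you, tacitly assumes strict positivity here (and uses $\alpha_{1}=1.0$ in all computations).
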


Equalities~(\ref{num3.4}) show that for all $t\in[0,T]$ the maximum of Hamiltonian~(\ref{num3.3})
is reached with a unique value $u=u_{*}^{2}(t)$. Therefore, the following lemma, which is similar
to Lemma~\ref{lemma4}, immediately follows from Theorem~6.1 in~\cite{fleming}.

\begin{lem}\label{lemma6}
The optimal control $u_{*}^{2}(t)$ is a continuous function on the interval $[0,T]$.
\end{lem}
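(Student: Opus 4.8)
The plan is to reduce continuity of $u_{*}^{2}(\cdot)$ to the continuity of an explicit projection formula: I would show that for every $t\in[0,T]$ the Hamiltonian~(\ref{num3.3}), regarded as a function of $u$ on $[0,u_{\max}]$, has a \emph{unique} maximizer which depends continuously on $t$, and then invoke Theorem~6.1 in~\cite{fleming}.

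First I would use the fact that control $u$ enters system~(\ref{num1.18}) linearly, only through the term $\beta_{1}(1-u)i$ in the first two equations. Substituting the optimal process into Hamiltonian~(\ref{num3.3}) gives, as a function of $u$, the quadratic
$$
\Phi_{t}(u)=-0.5\alpha_{3}u^{2}+b_{*}(t)\,u+c_{*}(t),
\qquad
b_{*}(t)=\beta_{1}s_{*}^{2}(t)\bigl(n_{*}^{2}(t)\bigr)^{-1}i_{*}^{2}(t)(\phi_{1}^{*}(t)-\phi_{2}^{*}(t)),
$$
with $c_{*}(t)$ independent of $u$. Its leading coefficient $-0.5\alpha_{3}$ is a strictly negative constant, so $\Phi_{t}$ is strictly concave; hence its maximum over the compact interval $[0,u_{\max}]$ is attained at a single point, namely the metric projection of the unconstrained critical point $\alpha_{3}^{-1}b_{*}(t)=\lambda_{*}^{2}(t)$ (cf.~(\ref{num3.5})) onto $[0,u_{\max}]$. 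This is exactly relationship~(\ref{num3.4}), and it shows that the maximizer is unique for \emph{every} $t\in[0,T]$, with no exceptional cases --- in contrast with OCP-1, where the curvature coefficient $A_{*}(t)$ could vanish or be negative (which is why Lemma~\ref{lemma3} and Corollary~\ref{corollary1} were needed there).

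Next I would verify that $\lambda_{*}^{2}(\cdot)$ is continuous on $[0,T]$. The trajectory components $s_{*}^{2}(\cdot)$, $i_{*}^{2}(\cdot)$, $n_{*}^{2}(\cdot)$ are continuous as (absolutely continuous) solutions of~(\ref{num1.18}), and the adjoint components $\phi_{1}^{*}(\cdot)$, $\phi_{2}^{*}(\cdot)$ are continuous as solutions of~(\ref{num3.1}). By Lemma~\ref{lemma1}, $n_{*}^{2}(t)>0$ on $[0,T]$, so, being continuous on a compact interval, $n_{*}^{2}(\cdot)$ is bounded below by a positive constant, and $\bigl(n_{*}^{2}(\cdot)\bigr)^{-1}$ is continuous as well; hence the product $\lambda_{*}^{2}(\cdot)$ is continuous on $[0,T]$. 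Finally, the clamping map $\pi(x)=\min\{u_{\max},\max\{0,x\}\}$ is Lipschitz, hence continuous, and by~(\ref{num3.4}) we have $u_{*}^{2}(t)=\pi\bigl(\lambda_{*}^{2}(t)\bigr)$; as a composition of continuous maps, $u_{*}^{2}(\cdot)$ is continuous on $[0,T]$, which is the claim.

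The argument is routine, and I do not anticipate a real obstacle; the one point that must be handled with care is the positivity of $n_{*}^{2}(t)$, which is what makes $\bigl(n_{*}^{2}(t)\bigr)^{-1}$ a bona fide continuous factor in $\lambda_{*}^{2}$ (and, more basically, what keeps the Hamiltonian and the adjoint system~(\ref{num3.1}) well defined along the optimal process). This positivity is exactly what Lemma~\ref{lemma1} supplies.
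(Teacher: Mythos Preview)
Your proposal is correct and follows the same route as the paper: establish that for every $t\in[0,T]$ the Hamiltonian~(\ref{num3.3}) has a \emph{unique} maximizer over $[0,u_{\max}]$ (here thanks to the strictly negative constant leading coefficient $-0.5\alpha_{3}$), and then invoke Theorem~6.1 in~\cite{fleming}. The paper's own argument is just that one sentence; your additional direct verification---continuity of $\lambda_{*}^{2}(\cdot)$ from the state/adjoint regularity and Lemma~\ref{lemma1}, followed by composition with the Lipschitz clamping map---is a welcome elaboration but not needed beyond what the cited theorem already delivers.
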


Finally, the arguments in Remark~\ref{remark1} are also applicable here. Specifically,
systems~(\ref{num1.18}) and~(\ref{num3.1}) with initial
conditions~(\ref{num1.19}),~(\ref{num3.2}), relationship~(\ref{num3.4}) and
equality~(\ref{num3.5}) form a two-point boundary value problem for the maximum principle. The
optimal control $u_{*}^{2}(t)$ satisfies this boundary value problem together with the
corresponding optimal solutions $s_{*}^{2}(t)$, $e_{*}^{2}(t)$, $i_{*}^{2}(t)$, $j_{*}^{2}(t)$,
$n_{*}^{2}(t)$ for system~(\ref{num1.18}). Moreover, due to the boundedness of the solutions for
the state and adjoint variables and the Lipschitz properties of systems~(\ref{num1.18})
and~(\ref{num3.1}), and equation~(\ref{num3.4}) that defines the control, it is possible to
establish the uniqueness of this control.

\section{The basic reproductive ratio}
The basic reproductive ratio $\Re_{0}$ typically characterizes the ability of infection to spread:
it is usually assumed~(\cite{smith}) that an epidemic occurs if $\Re_{0}>1$. If $\Re_{0}<1$, then
the epidemic gradually fades~(\cite{melnik}). To find $\Re_{0}$ for systems~(\ref{num1.6})
and~(\ref{num1.13}), we apply the Next-Generation Matrix Approach~\cite{driessche}. For both these
systems, the basic reproductive ratio is given by the same formula:
\begin{equation}\label{num4.2}
\Re_{0}=\frac{\beta_{1}\sigma_{1}}{\rho_{1}}+\frac{\beta_{2}\sigma_{2}}{\rho_{2}}\;.
\end{equation}

Further in this paper we use the following values of parameters:
\begin{equation}\label{num4.1}
\begin{array}{lll}
\rho_{1}=1/14=0.071429~1/\textrm{days} & \sigma_{1}=0.8 & \alpha_{1}=1.0             \\
\rho_{2}=1/21=0.047619~1/\textrm{days} & \sigma_{2}=0.2 & \alpha_{2}=1.0             \\
\gamma\;=0.18~1/\textrm{days}          & q=0.15         & \alpha_{3}=5.0\cdot10^{-5} \\
u_{\max}=0.9                           &                &                            \\
\end{array}
\end{equation}
We also assume that
\begin{equation}\label{num4.3}
\beta_{2}=0.1\beta_{1}.
\end{equation}
For these values of parameters,
\begin{equation}\label{num4.4}
\Re_{0}=11.62\cdot\beta_{1}.
\end{equation}

Table~\ref{table1} shows the relationships between $\Re_{0}$ and coefficients $\beta_{1}$ and
$\beta_{2}$ that are assumed to be related via~(\ref{num4.3}) and~(\ref{num4.4}). Further in this
paper we use the value of $\Re_{0}$ from $\{2.5;3.0;4.0;6.0\}$
(see~\cite{cao,chen,jing,katul,liu,park,zhao,zhuang}).

\begin{table}[htb]
\begin{center}
\begin{tabular}{|c|c|c|}
\hline
$\Re_{0}$ & $\beta_{1}$          & $\beta_{2}$ \\
\hline
$2.5$     & $2.5/11.62=0.215146$ & $0.021515$  \\
$3.0$     & $3.0/11.62=0.258176$ & $0.025818$  \\
$4.0$     & $4.0/11.62=0.344234$ & $0.034423$  \\
$6.0$     & $6.0/11.62=0.516351$ & $0.051635$  \\
\hline
\end{tabular}
\vspace{0.3cm}
\caption{Values of parameters $\beta_{1}$ and $\beta_{2}$ depending on $\Re_{0}$.}\label{table1}
\end{center}
\end{table}

For control systems~(\ref{num1.11}) and~(\ref{num1.14}), the corresponding basic reproductive ratios
$\Re_{0}^{1}(u)$ and $\Re_{0}^{2}(u)$ calculated under assumption of the constant control are
\begin{equation}\label{num4.5}
\begin{aligned}
\Re_{0}^{1}(u)&=(1-u)^{2}\frac{\beta_{1}\sigma_{1}}{\rho_{1}}+(1-u)\frac{\beta_{2}\sigma_{2}}{\rho_{2}} \\
              &=\beta_{1}\left((1-u)^{2}\frac{\sigma_{1}}{\rho_{1}}+0.1(1-u)\frac{\sigma_{2}}{\rho_{2}}\right)
\end{aligned}
\end{equation}
and
\begin{equation}\label{num4.6}
\Re_{0}^{2}(u)=(1-u)\frac{\beta_{1}\sigma_{1}}{\rho_{1}}+\frac{\beta_{2}\sigma_{2}}{\rho_{2}}
              =\beta_{1}\left((1-u)\frac{\sigma_{1}}{\rho_{1}}+0.1\frac{\sigma_{2}}{\rho_{2}}\right).
\end{equation}
Parameters from~(\ref{num4.1}) yield
$$
\begin{aligned}
\Re_{0}^{1}(u_{\max})&=0.01\beta_{1}(0.8\cdot14+0.2\cdot21)=0.154\beta_{1}, \\
\Re_{0}^{2}(u_{\max})&=0.1 \beta_{1}(0.8\cdot14+0.2\cdot21)=1.54\beta_{1}.
\end{aligned}
$$
It is easy to see that for all values of $\beta_{1}$ from Table~\ref{table1} the inequalities
$$
\Re_{0}^{1}(u_{\max})<1, \quad \Re_{0}^{2}(u_{\max})<1
$$
hold. This means that quarantine with the maximum intensity $u_{\max}=0.9$ should eventually stop
the epidemic. It is also clear that for both systems the value of $u_{\max}$ can be reduced.

\section{Numerical results}\label{numerical-results}
We conduct numerical study of~OCP-1 and~OCP-2 using BOCOP~2.0.5 package~\cite{bonnans}.
BOCOP~2.0.5 is an optimal control interface implemented in MATLAB and used for solving optimal
control problems with general path and boundary constraints and with free or fixed final time. By
a time discretization, such optimal control problems are approximated by finite-dimensional
optimization problems, which are then solved by the well-known software package IPOPT using sparse
exact derivatives that are computed by ADOL-C. IPOPT is an open-source software package designed
for large-scale nonlinear optimization. In the computations, we set the number of time steps to
$5000$ and the tolerance to $10^{-14}$ and use the sixth-order Lobatto~III~C discretization
rule~\cite{bonnans}.

In the computations we use parameters from~(\ref{num4.1}) and Table~\ref{table1}. The value of $T$
(the duration of the quarantine) is 15~days, 30~days or 60~days. The initial conditions are
$$
\begin{array}{ll}
s_{0}=0.99985\;(S_{0}=10^{7}-1500) & e_{0}=5.0\cdot10^{-5}\;(E_{0}=500) \\
i_{0}=8.0\cdot10^{-5}\;(I_{0}=800) & j_{0}=2.0\cdot10^{-5}\;(J_{0}=200) \\
n_{0}=1.0\;(N_{0}=10^{7})          &                                    \\
\end{array}
$$

\begin{figure}[htb]
\begin{center}
\includegraphics[width=7.0cm,height=6.0cm]{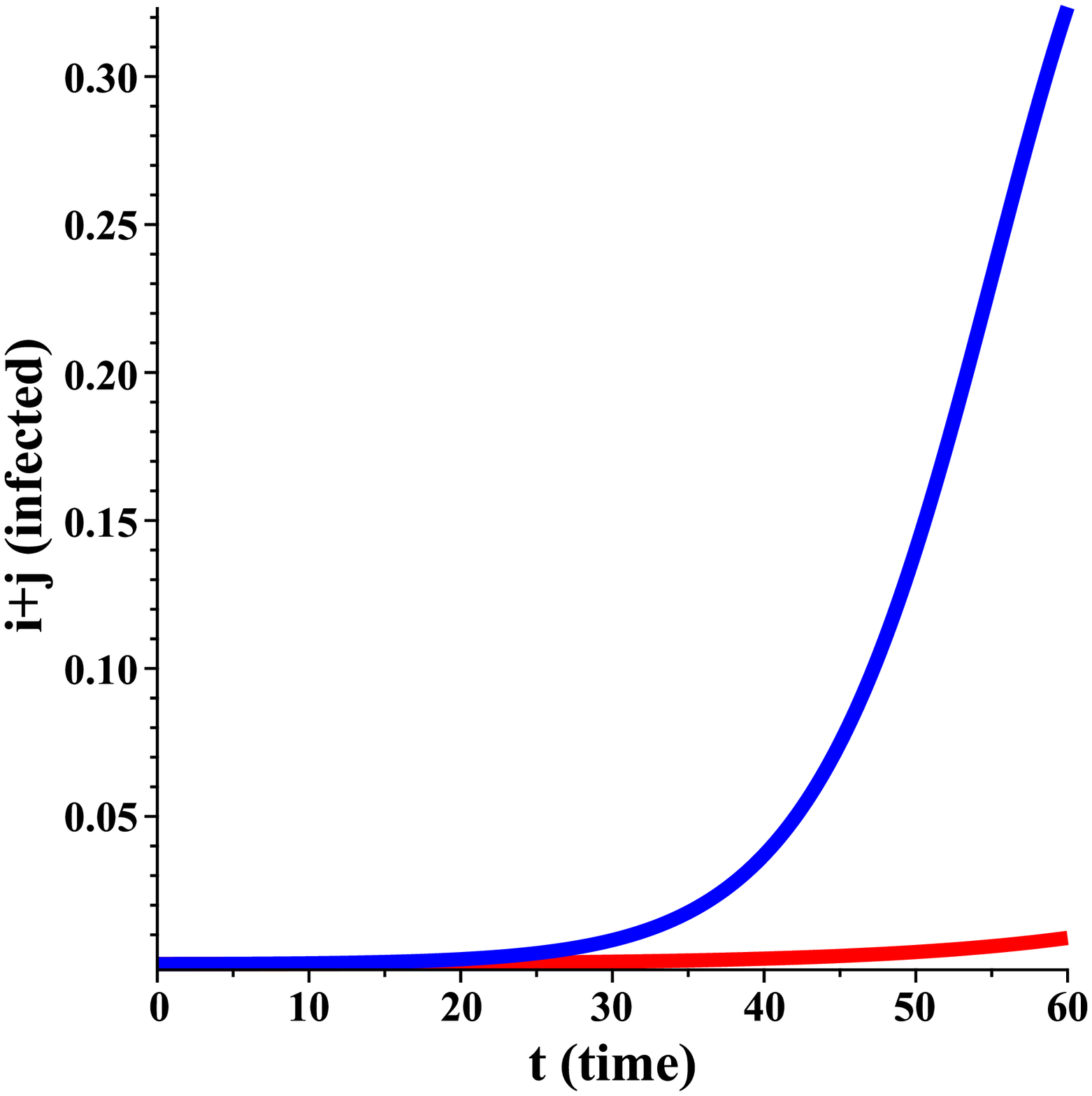}
\includegraphics[width=7.0cm,height=6.0cm]{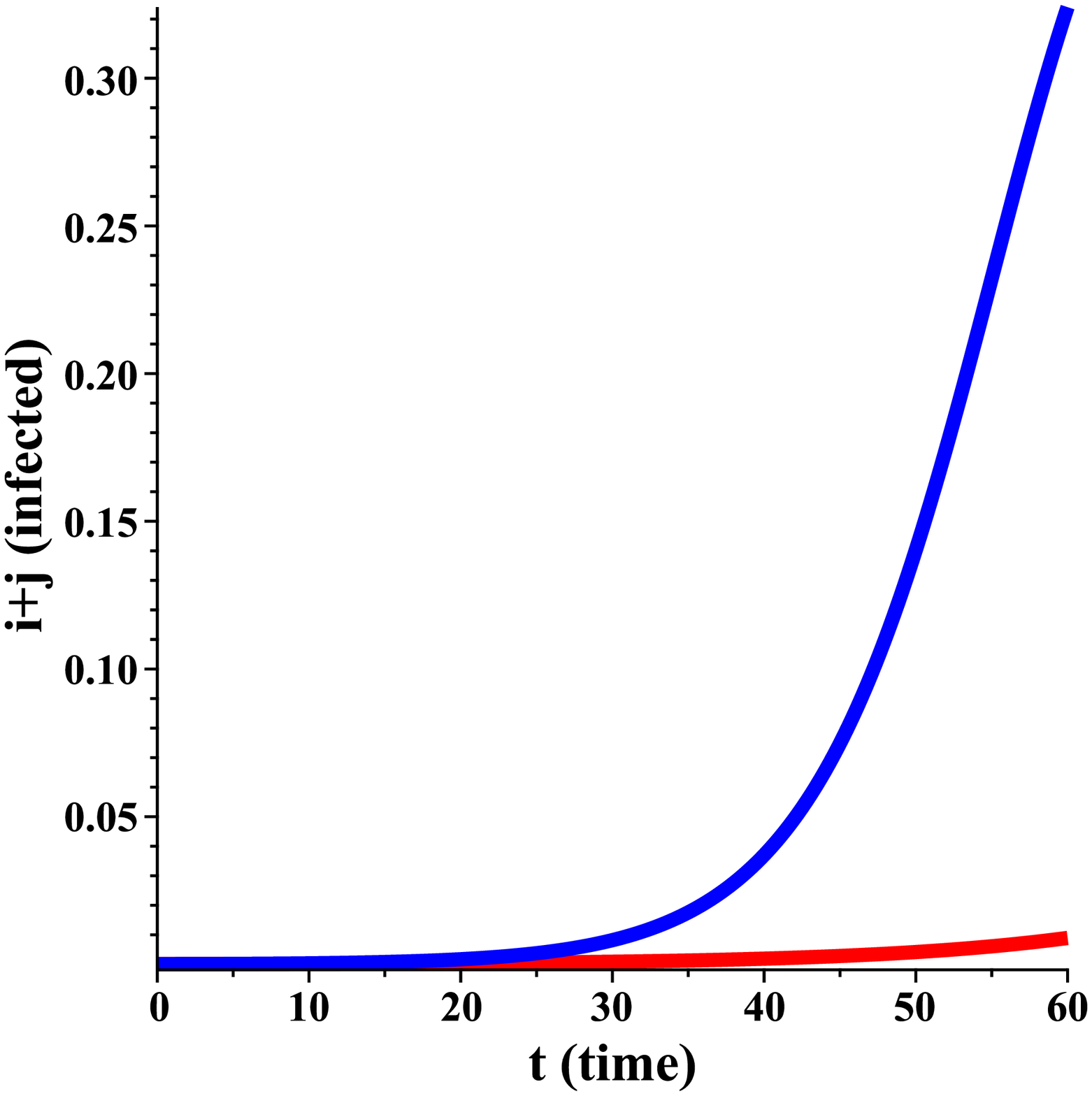}\\
\includegraphics[width=7.0cm,height=6.0cm]{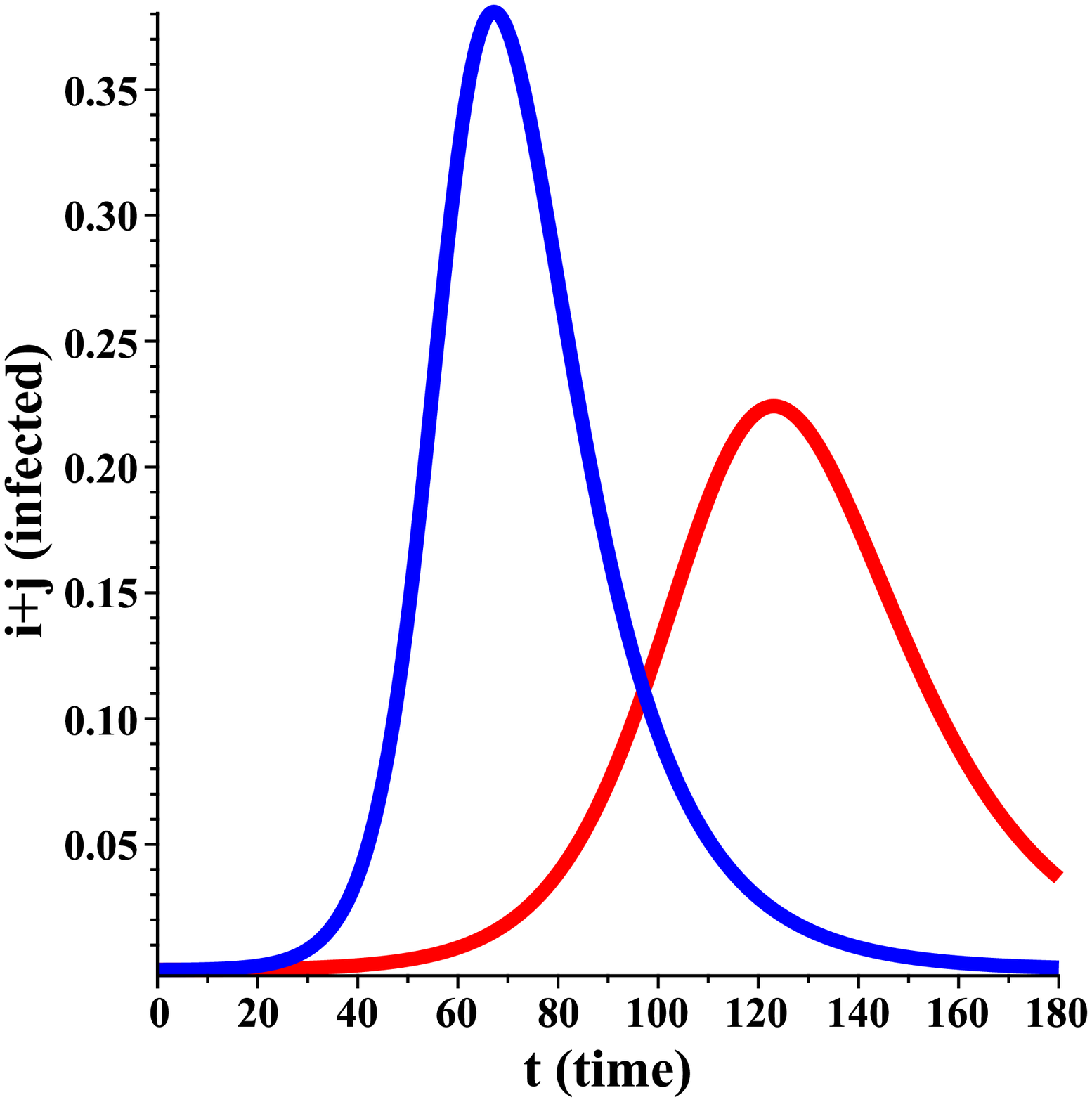}
\includegraphics[width=7.0cm,height=6.0cm]{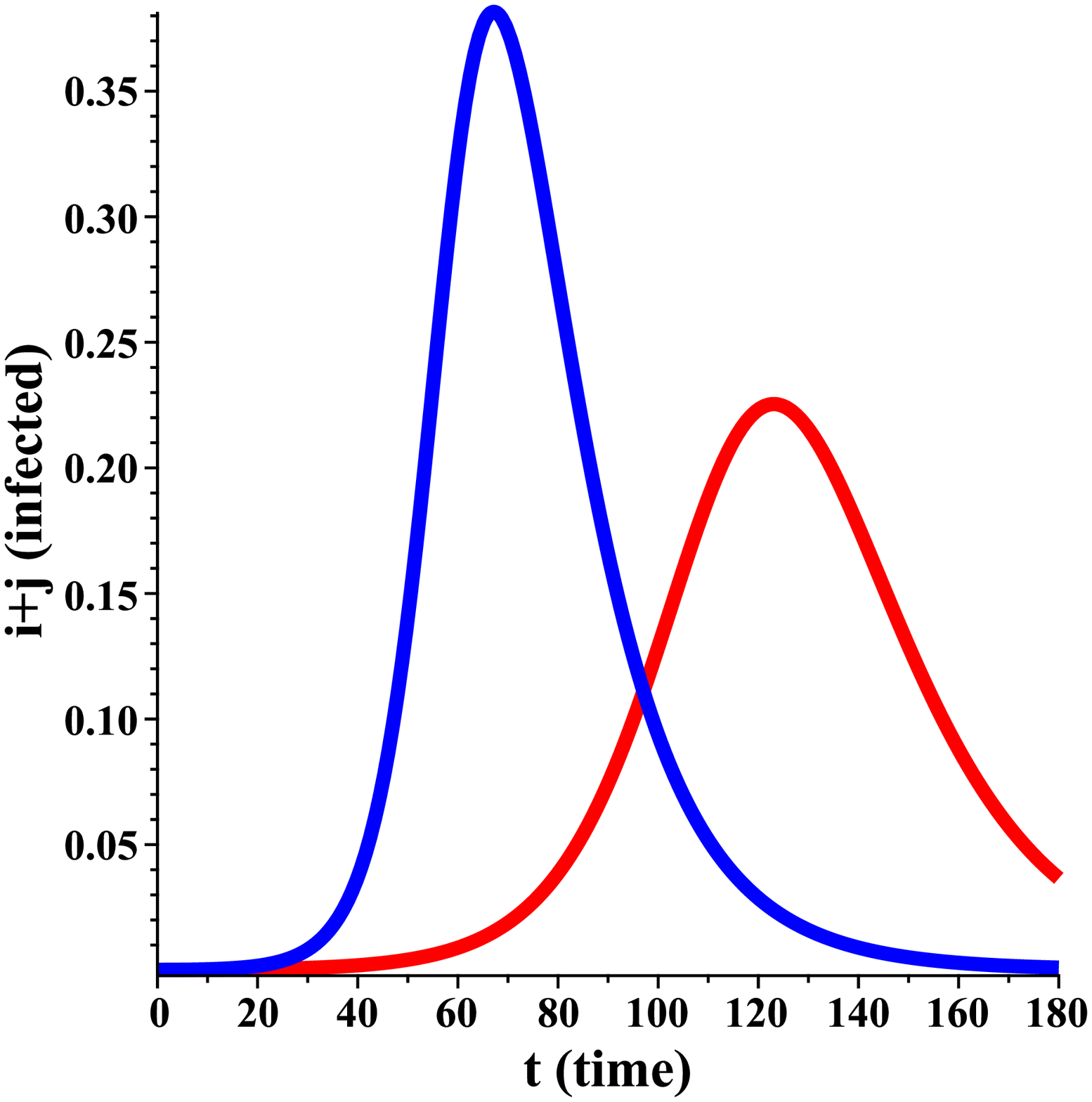}
\caption{Upper~row:~Expected number of the infectious individuals $i(t)+j(t)$ in the absence of
any intervention policy for $T=60$~days and for $\Re_{0}=3.0$~(red) and $\Re_{0}=6.0$~(blue)
for~Model-1~(left) and  Model-2~(right), respectively; lower~row: the same for $T=180$~days.
(Please, note that although the infection curves for Model-1 and Model-2 look almost the same
on the graphs, they are slightly different, as can be seen from Table~\ref{table2}.)}\label{pict0}
\end{center}
\end{figure}

Though we investigated these problems for $\Re_{0}\in\{2.5;3.0;4.0;6.0\}$ and for time intervals
$[0,T]$, $T\in\{15;30;45;60;120\}$ days, in this section we provide only the results for
$\Re_{0}=3.0$ and $\Re_{0}=6.0$, as the most representative cases of average ($\Re_{0}=3.0$) and
high ($\Re_{0}=6.0$) virus infectivity, referring to other case, if needed. Table~\ref{table2}
shows \lq\lq the number of active cases'' $(i(T)+j(T))$ at the end of the quarantine for different
$T\in\{15;30;60;120\}$ and $\Re_{0}\in\{3.0;6.0\}$ under each control strategy (OCP-1 and OCP-2)
and without control for Model-1~(\ref{num1.6}) and Model-2~(\ref{num1.13}).

\begin{table}[htb]
\begin{center}
\begin{tabular}{|c||c||c|c||c|c|}
\hline
$T$      & $\Re_{0}$ & OCP-1                       & Model-1     & OCP-2                       & Model-2     \\
         &           & $i_{*}^{1}(T)+j_{*}^{1}(T)$ & $i(T)+j(T)$ & $i_{*}^{2}(T)+j_{*}^{2}(T)$ & $i(T)+j(T)$ \\
\hline
15~days  & $3.0$ & $0.653373\cdot10^{-4}$ & $0.000280994$ & $0.789280\cdot10^{-4}$ & $0.000280994$ \\
         & $6.0$ & $0.655233\cdot10^{-4}$ & $0.000800482$ & $0.974572\cdot10^{-4}$ & $0.000800485$ \\
\hline
30~days  & $3.0$ & $0.115364\cdot10^{-3}$ & $0.000898117$ & $0.501926\cdot10^{-4}$ & $0.000898128$ \\
         & $6.0$ & $0.313768\cdot10^{-4}$ & $0.008208311$ & $0.836713\cdot10^{-4}$ & $0.008208734$ \\
\hline
60~days  & $3.0$ & $0.104779\cdot10^{-4}$ & $0.008950499$ & $0.232701\cdot10^{-4}$ & $0.008952093$ \\
         & $6.0$ & $0.107664\cdot10^{-4}$ & $0.323447058$ & $0.619580\cdot10^{-4}$ & $0.324043791$ \\
\hline
120~days & $3.0$ & $0.433145\cdot10^{-5}$ & $0.222049395$ & $0.102394\cdot10^{-4}$ & $0.223221149$ \\
         & $6.0$ & $0.523114\cdot10^{-5}$ & $0.028793638$ & $0.352345\cdot10^{-4}$ & $0.028675612$ \\
\hline
\end{tabular}
\vspace{0.3cm}
\caption{Values $i_{*}^{1}(T)+j_{*}^{1}(T)$ (OCP-1), $i(T)+j(T)$ (Model-1),
         $i_{*}^{2}(T)+j_{*}^{2}(T)$ (OCP-2) and $i(T)+j(T)$ (Model-2) for different $T$ and $\Re_{0}$.}\label{table2}
\end{center}
\end{table}

In all graphs, the numerical curves for $\Re_{0}=3.0$ are shown in red and for $\Re_{0}=6.0$ -- in
blue. Figure~\ref{pict0} shows dynamics of the infectious population for uncontrolled Model-1 and
Model-2 on the time interval of~60 and 180~days. While the dynamics of the spread of the virus in
these models is very similar and mainly determined by the value of $\Re_{0}$, the numerical values
of all the solutions are a little bit different. (See also columns~4 and~6 of Table~\ref{table2}).
We can see from the graphs in Figure~\ref{pict0} that under the same initial conditions and
parameters, the peak of proportion of the total sick carriers of the virus falls on day~130 at
$\Re_{0}=3.0$ and the 70th day at $\Re_{0}=6.0$ and is approximately $2,200,000$~virus carriers and
$3,800,000$, respectively.

Figures~\ref{pict1}--\ref{pict6} present some results of computations for two optimal control
problems (OCP-1 and~OCP-2) on the time interval of 15, 30 and 60~days.

\begin{figure}[htb]
\begin{center}
\includegraphics[width=7.5cm,height=6.5cm]{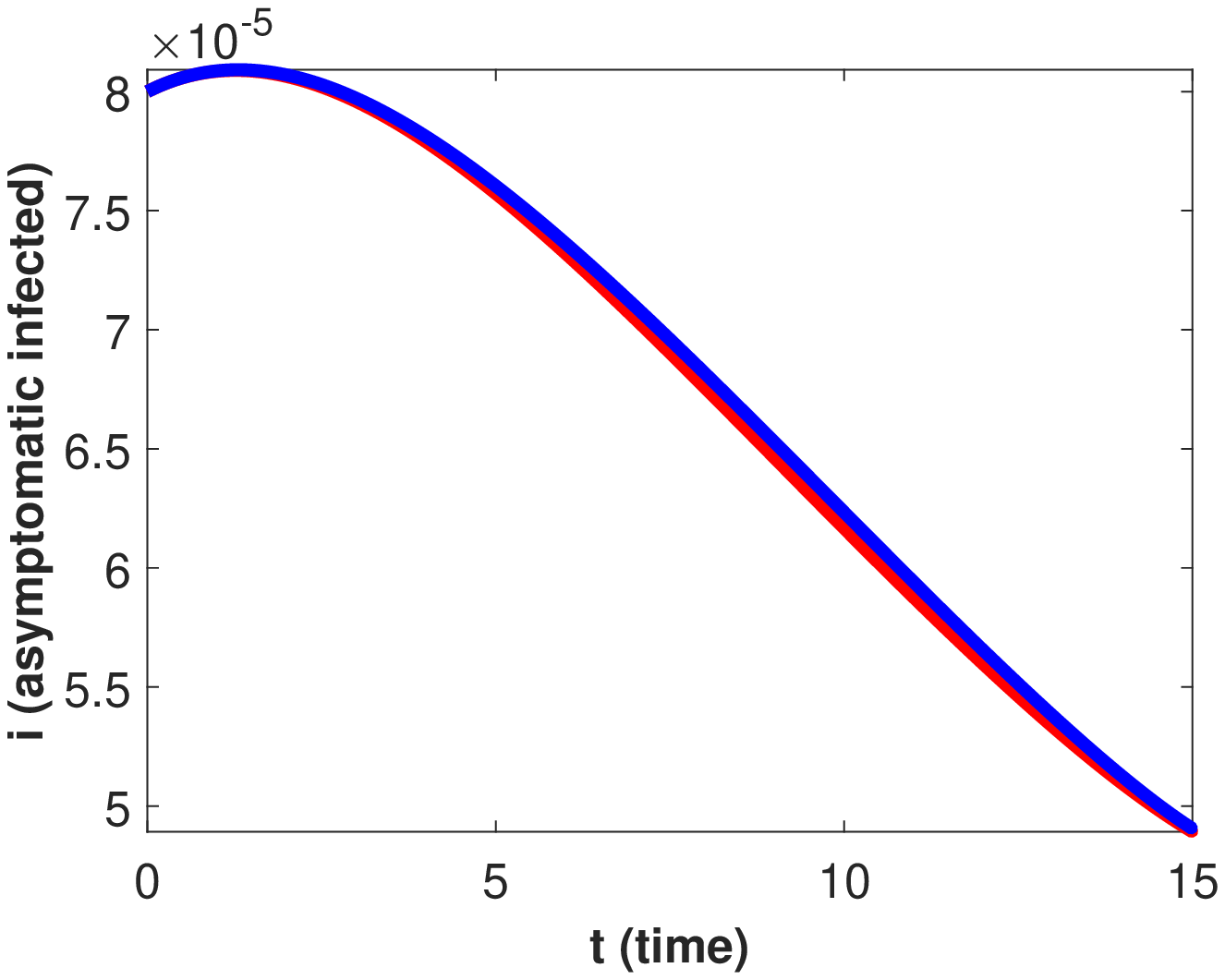}
\includegraphics[width=7.5cm,height=6.5cm]{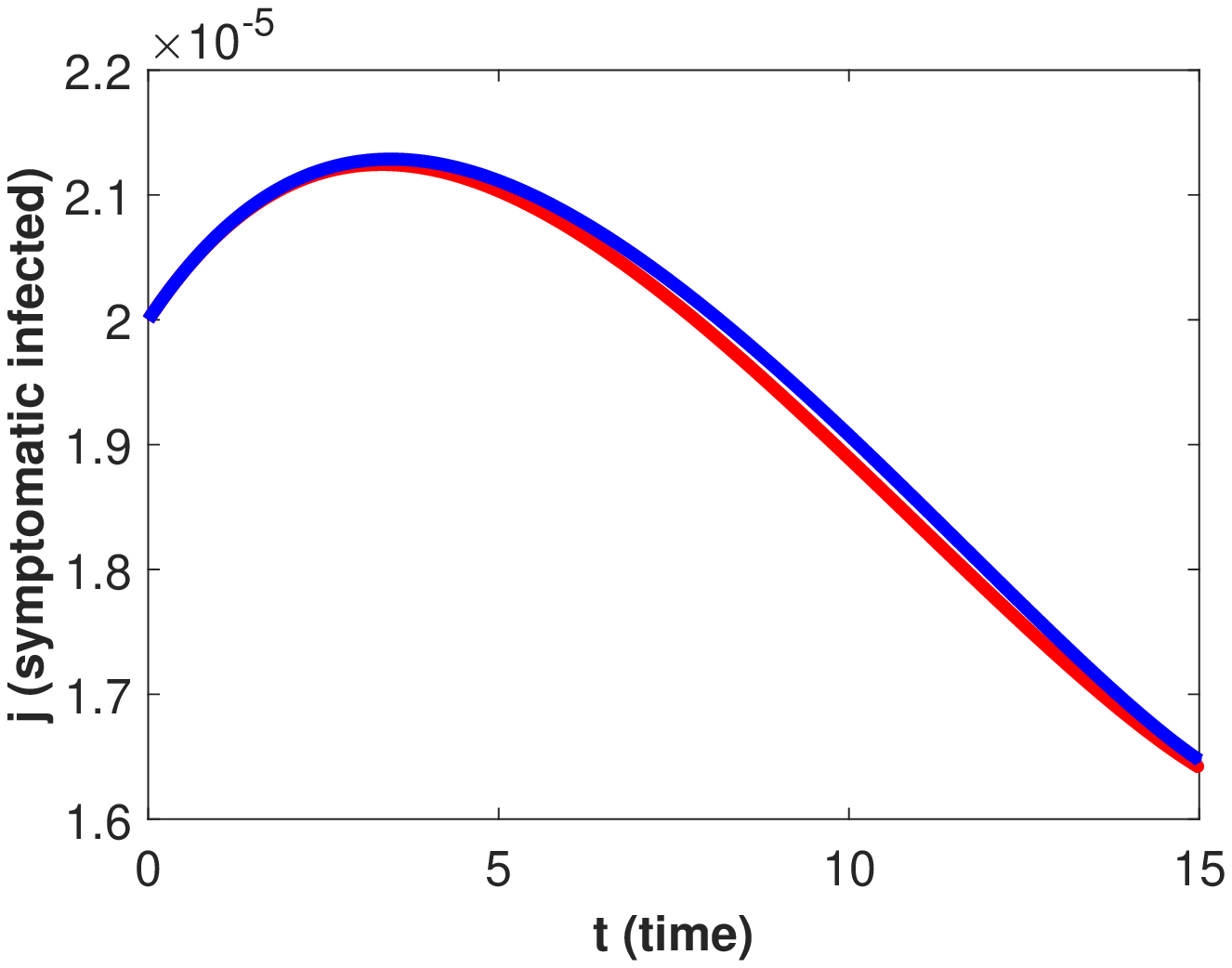}\\
\includegraphics[width=7.5cm,height=6.5cm]{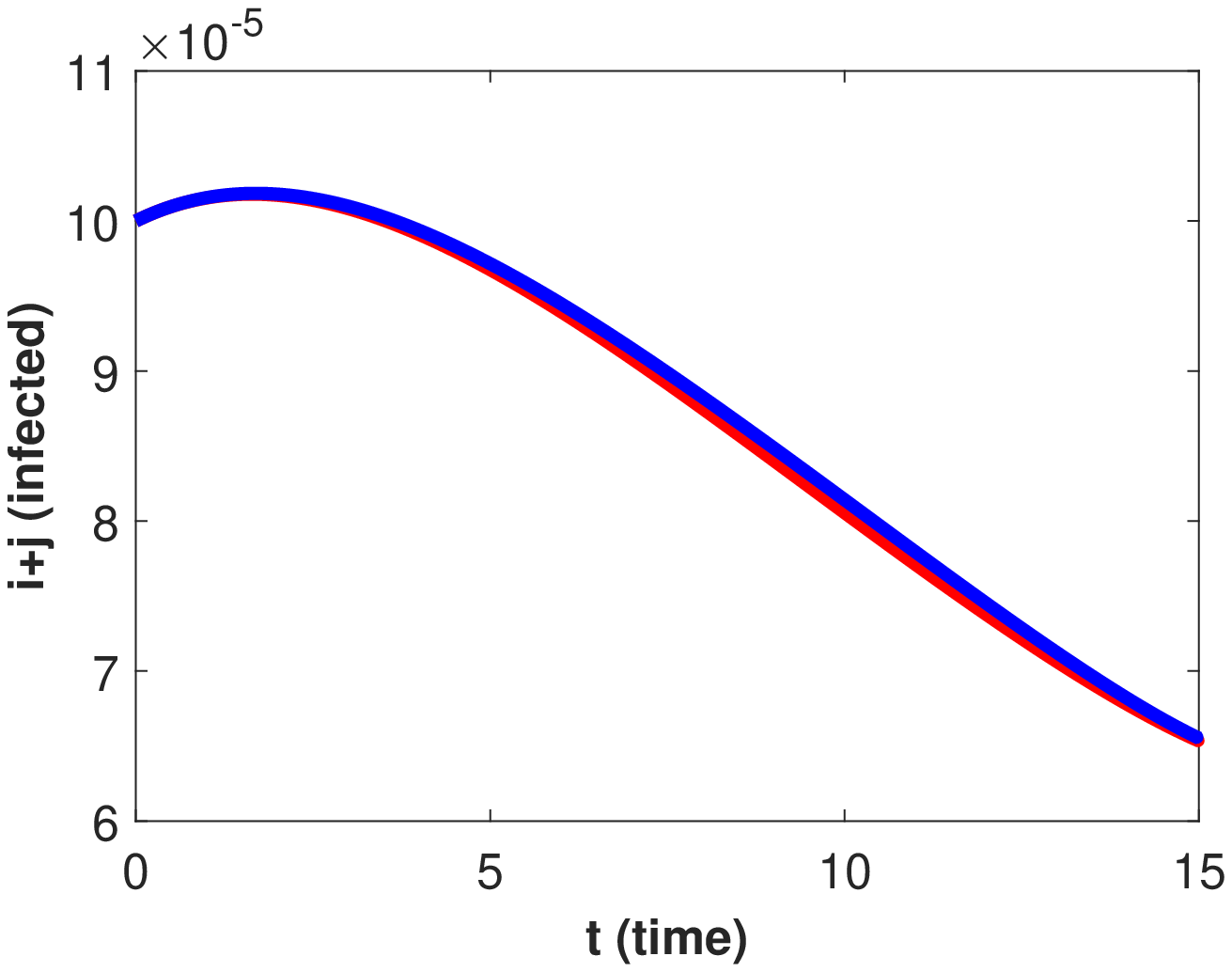}
\includegraphics[width=7.5cm,height=6.5cm]{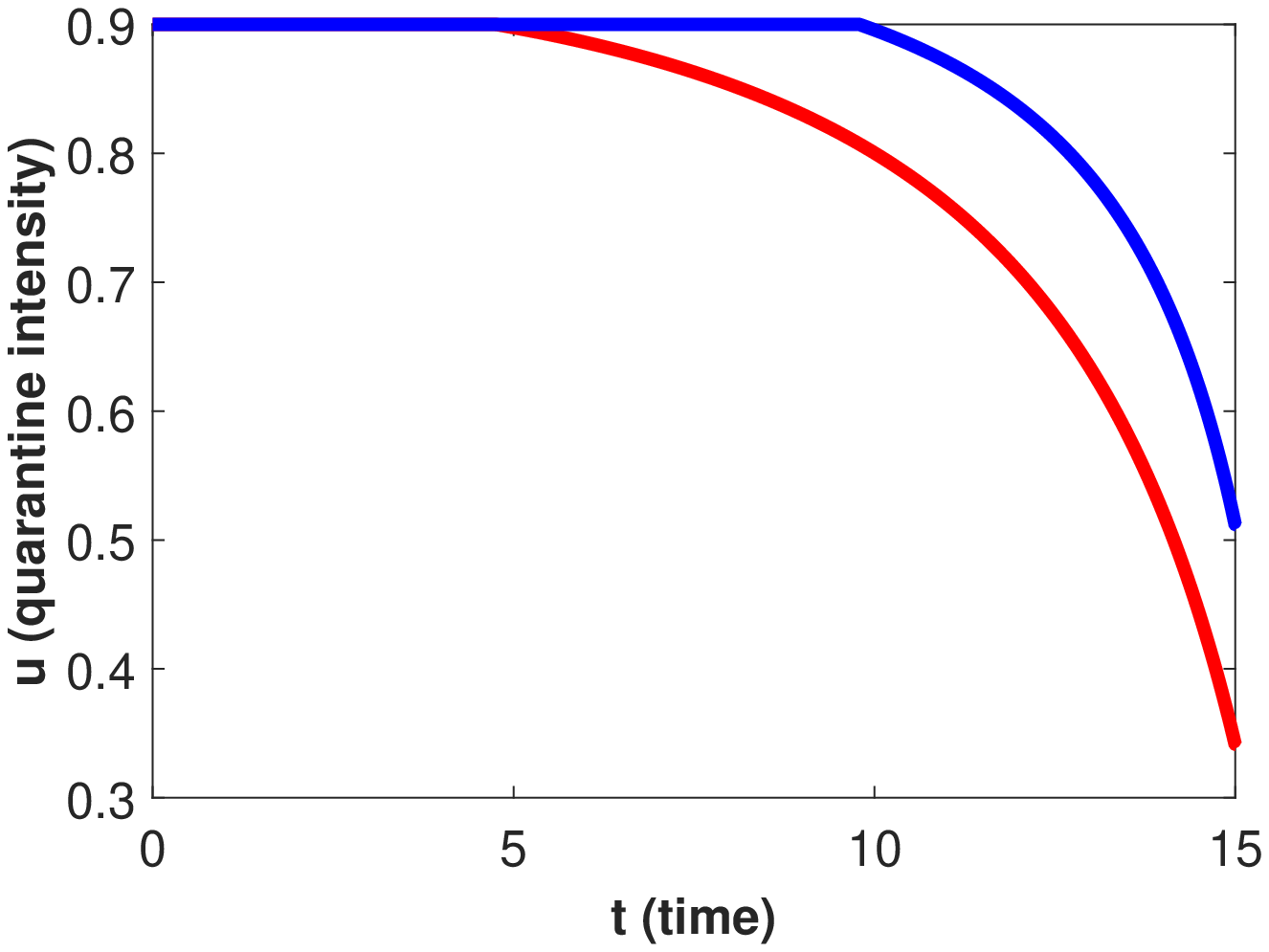}
\caption{OCP-1:~optimal solutions $i_{*}^{1}(t)$, $j_{*}^{1}(t)$, $i_{*}^{1}(t)+j_{*}^{1}(t)$ and
         optimal control $u_{*}^{1}(t)$ for $T=15$~days and $\Re_{0}=3.0$~(red), $\Re_{0}=6.0$~(blue):
         upper~row:~$i_{*}^{1}(t)$,~$j_{*}^{1}(t)$;
         lower~row:~$i_{*}^{1}(t)+j_{*}^{1}(t)$,~$u_{*}^{1}(t)$.}\label{pict1}
\end{center}
\end{figure}

\begin{figure}[htb]
\begin{center}
\includegraphics[width=7.5cm,height=6.5cm]{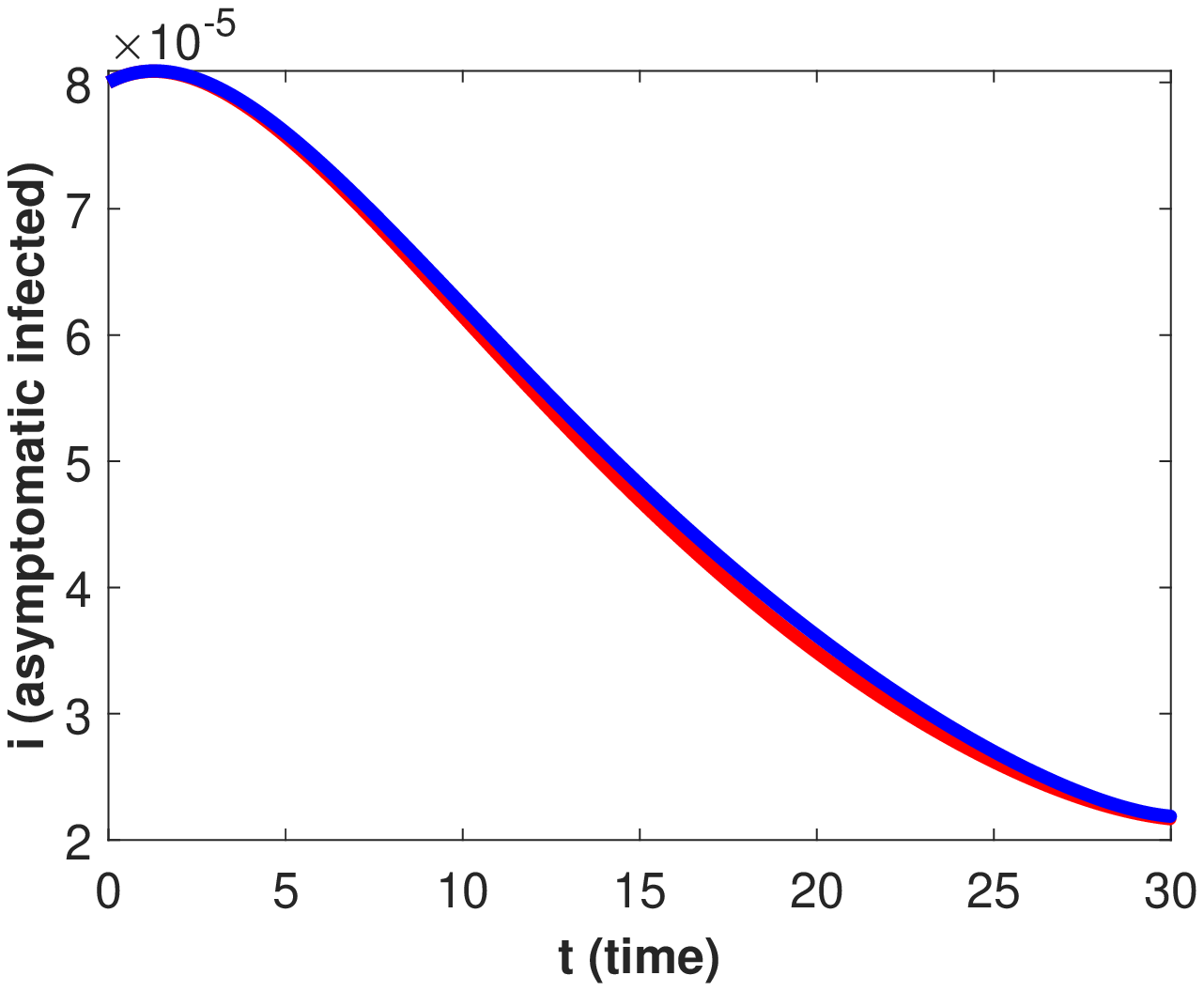}
\includegraphics[width=7.5cm,height=6.5cm]{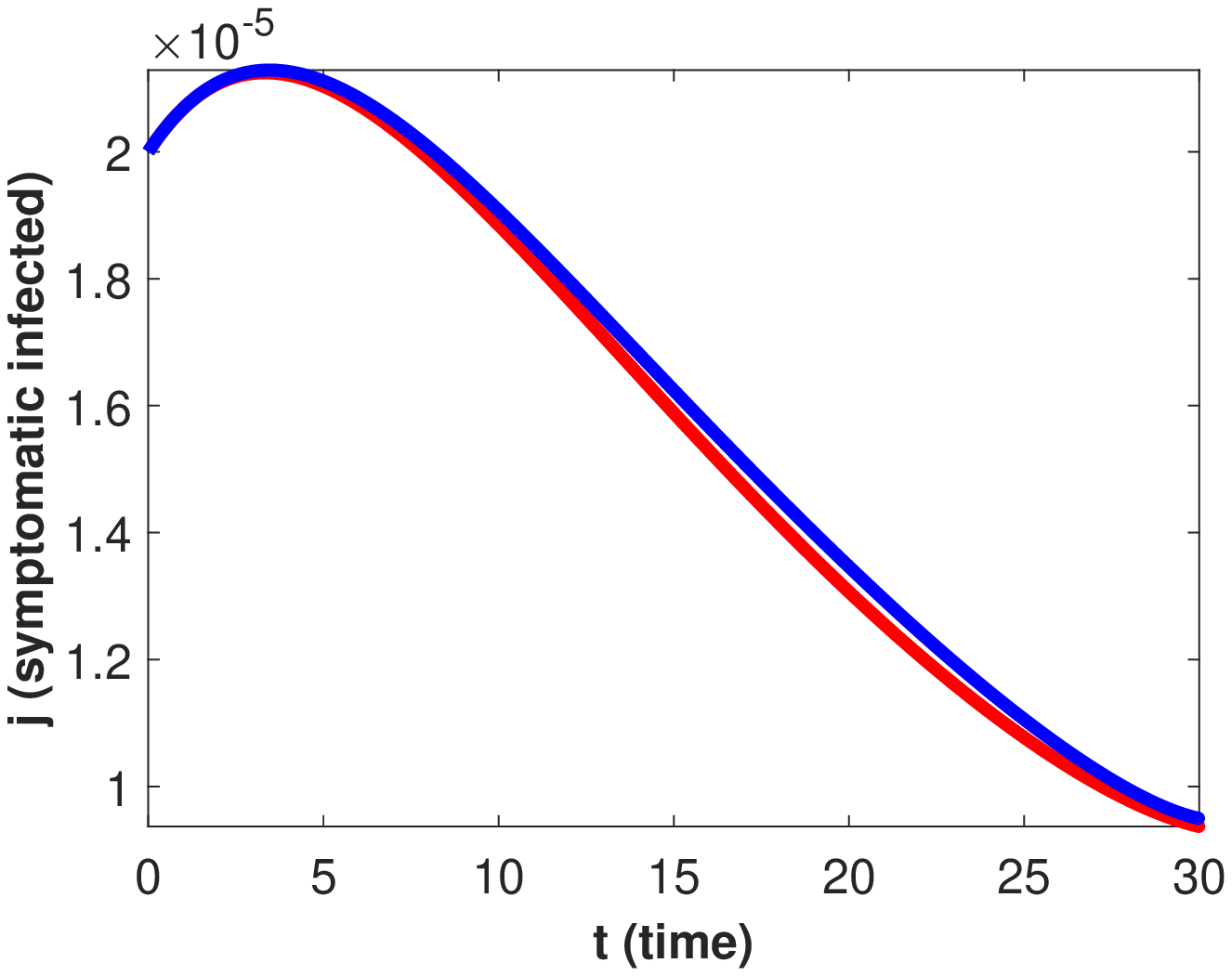}\\
\includegraphics[width=7.5cm,height=6.5cm]{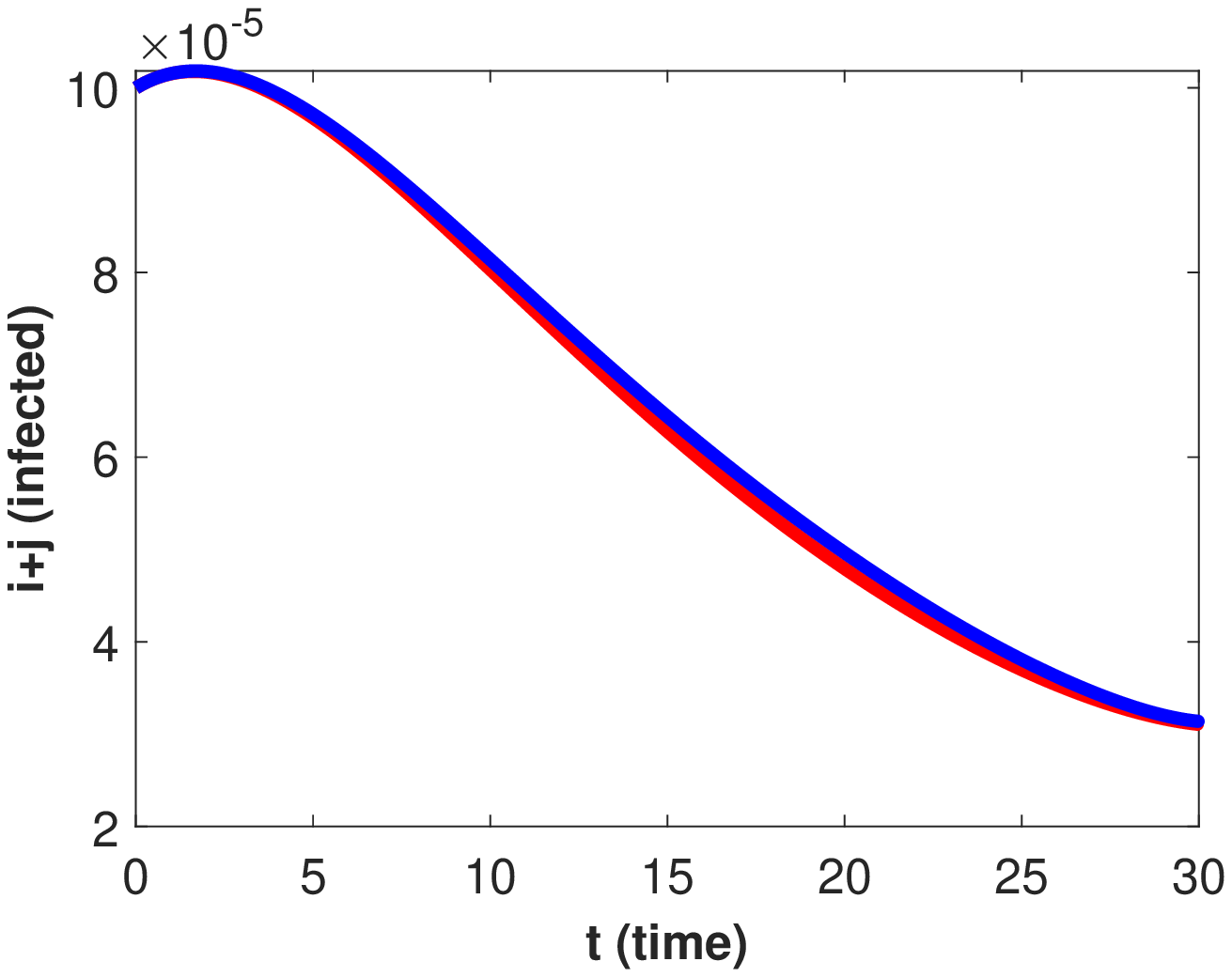}
\includegraphics[width=7.5cm,height=6.5cm]{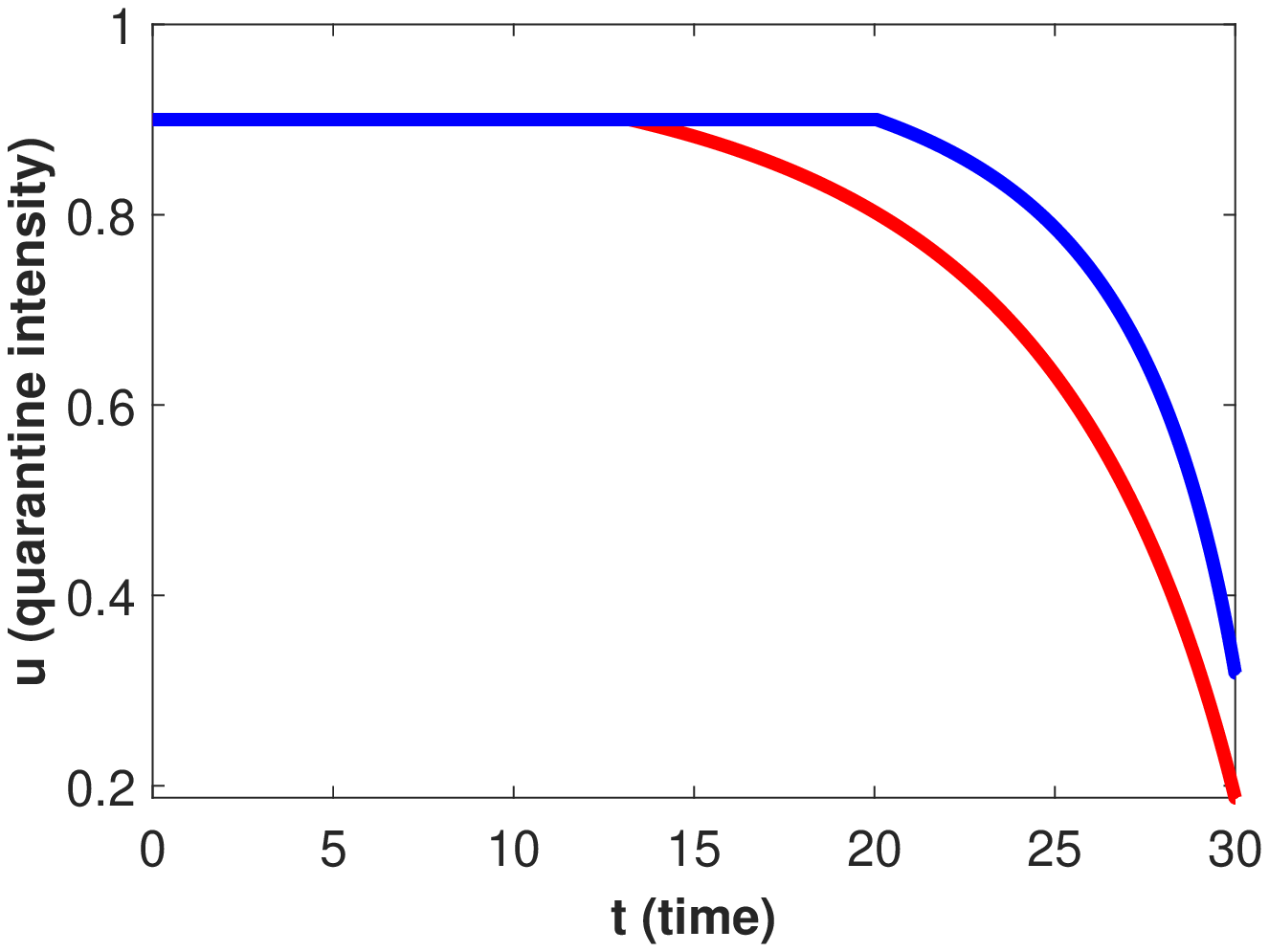}
\caption{OCP-1:~optimal solutions $i_{*}^{1}(t)$, $j_{*}^{1}(t)$, $i_{*}^{1}(t)+j_{*}^{1}(t)$ and
         optimal control $u_{*}^{1}(t)$ for $T=30$~days and $\Re_{0}=3.0$~(red), $\Re_{0}=6.0$~(blue):
         upper~row:~$i_{*}^{1}(t)$,~$j_{*}^{1}(t)$;
         lower~row:~$i_{*}^{1}(t)+j_{*}^{1}(t)$,~$u_{*}^{1}(t)$.}\label{pict2}
\end{center}
\end{figure}

\begin{figure}[htb]
\begin{center}
\includegraphics[width=7.5cm,height=6.5cm]{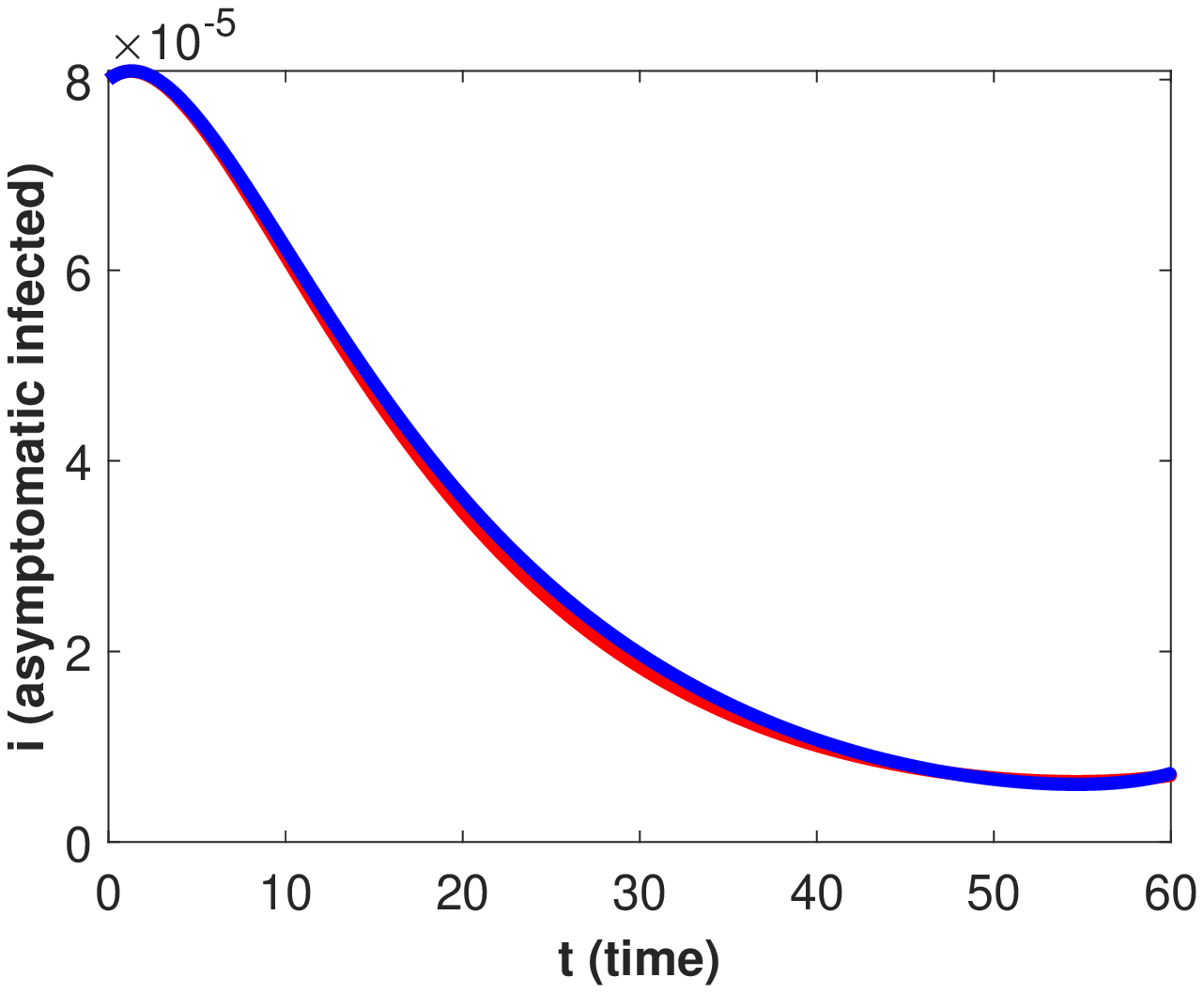}
\includegraphics[width=7.5cm,height=6.5cm]{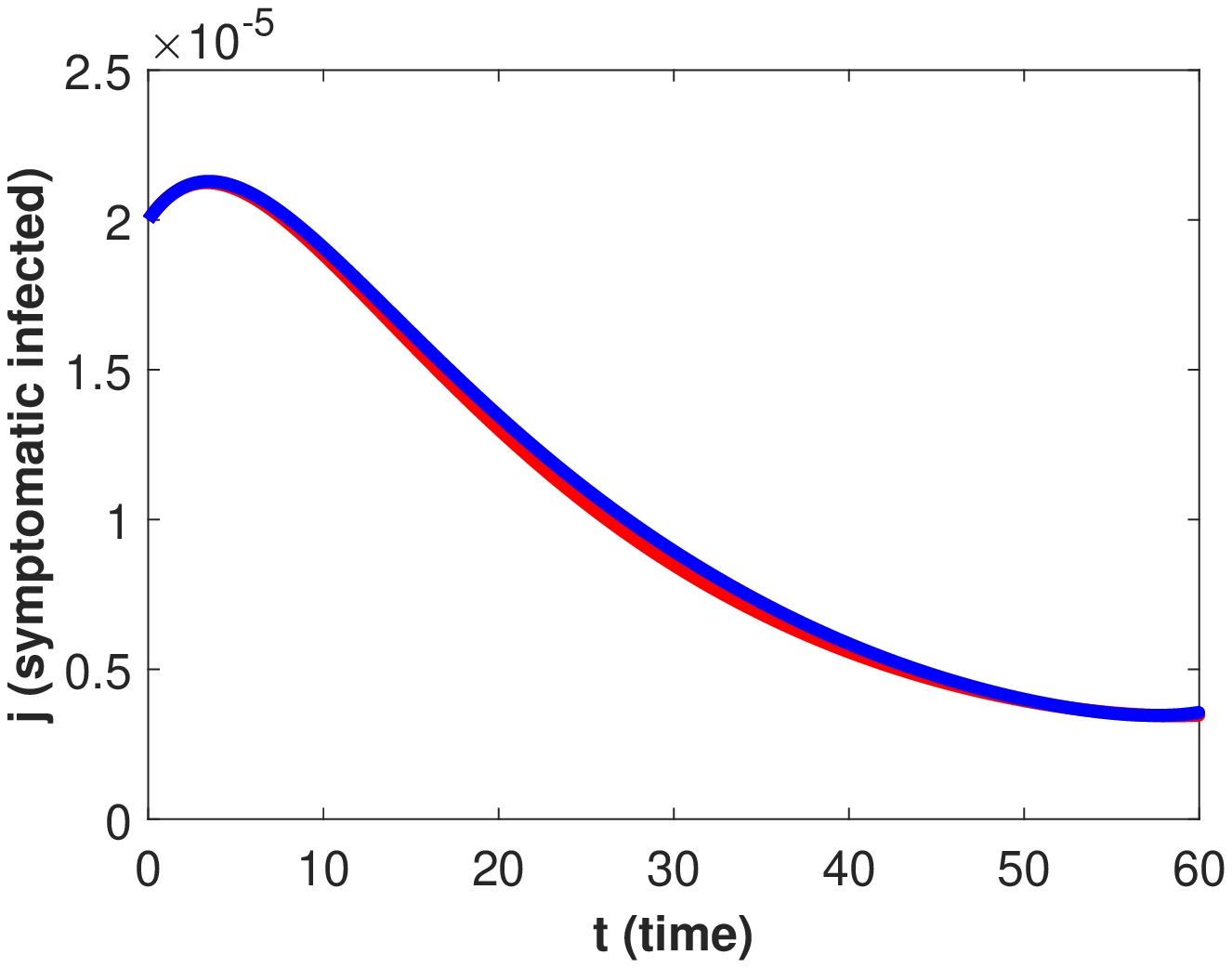}\\
\includegraphics[width=7.5cm,height=6.5cm]{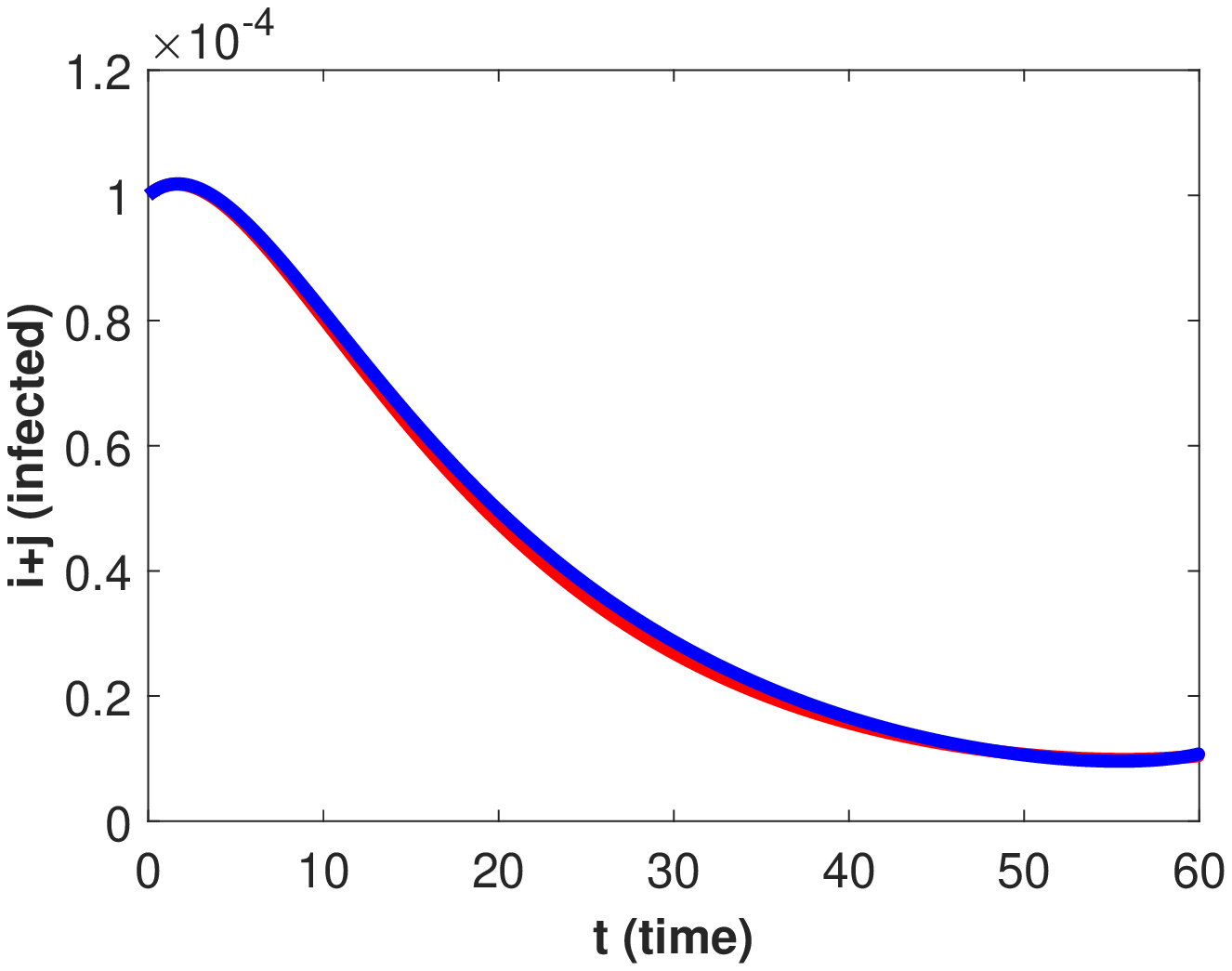}
\includegraphics[width=7.5cm,height=6.5cm]{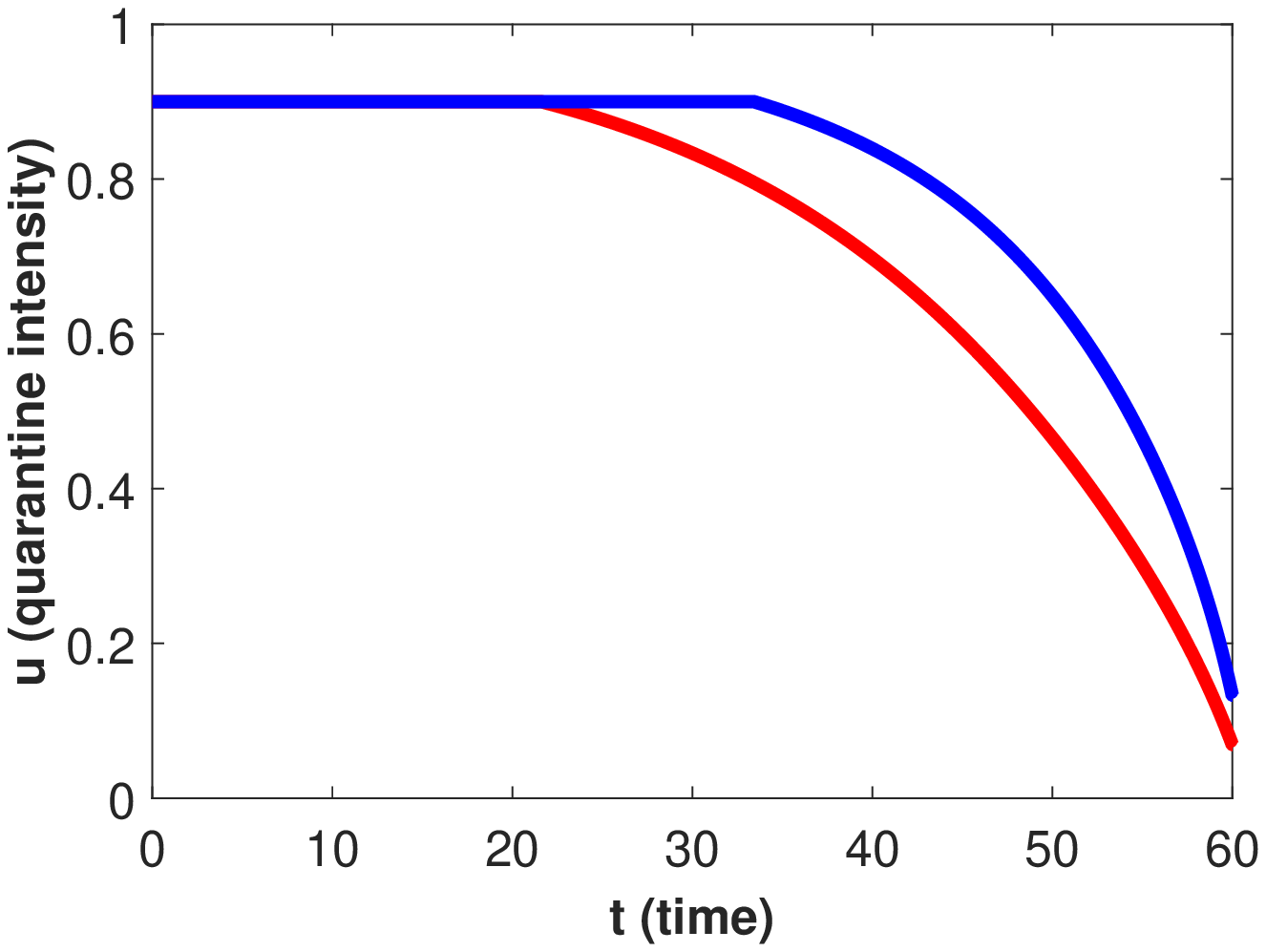}
\caption{OCP-1:~optimal solutions $i_{*}^{1}(t)$, $j_{*}^{1}(t)$, $i_{*}^{1}(t)+j_{*}^{1}(t)$ and
         optimal control $u_{*}^{1}(t)$ for $T=60$~days and $\Re_{0}=3.0$~(red), $\Re_{0}=6.0$~(blue):
         upper~row:~$i_{*}^{1}(t)$,~$j_{*}^{1}(t)$;
         lower~row:~$i_{*}^{1}(t)+j_{*}^{1}(t)$,~$u_{*}^{1}(t)$.}\label{pict3}
\end{center}
\end{figure}

Figures~\ref{pict1}--\ref{pict3} show the results of computations for OCP-1 for~15, 30 and 60~days
of control (duration of the quarantine measures) for two different reproductive ratios
($\Re_{0}=3.0$ and $\Re_{0}=6.0$). Figures~\ref{pict4}--\ref{pict6} show the results of similar
computations for OCP-2. These figures demonstrate that even for a comparatively low basic
reproductive ratio, 15-day quarantine is insufficient to eradicate the epidemic. In particular, for
OCP-2, one can see that the graphs of $j_{*}^{2}(t)$ and $(i_{*}^{2}(t)+j_{*}^{2}(t))$ do not even
start to decrease. (Please, note that the sums of the appropriate graphs
$(i_{*}^{1}(t)+j_{*}^{1}(t))$ and $(i_{*}^{2}(t)+j_{*}^{2}(t))$ represent the level of the
infectious at moment $t$ for OCP-1 and OCP-2, respectively.)

Note that the results for $\Re_{0}=4.0$ are very similar to those for $\Re_{0}=3.0$. If the virus
is more contagious, that is for $\Re_{0}=6.0$, then the situation is worse: as one can see in
Figure~\ref{pict4}, the hardest quarantine conditions give no acceptable result for 15~days, and
the virus continues to grow. The need for optimal quarantine management for at least two months is
especially well manifested in Figures~\ref{pict4}--\ref{pict6} for OCP-2, if the intensity of
infection is high (see the blue curves). So, if we consider only a site in two weeks or even in one
month (Figures~\ref{pict4} and~\ref{pict5}), then it can be seen that the number of patients who
may require hospitalization ($j_{*}^{2}(t)$) steadily increases over a site of 15~days
($j_{*}^{2}(15)>j_{*}^{2}(0)=2.0\cdot10^{-5}$), does not decrease in 30~days
($j_{*}^{2}(30)>j_{*}^{2}(0)=2.0\cdot10^{-5}$), and only when considering the quarantine of 60~days,
a decrease in symptomatic patients ($j_{*}^{2}(60)=1.75\cdot10^{-5}$) is observed. Of course, for
$\Re_{0}=3.0$, the situation with optimal control is much better and the red curves $j_{*}^{2}(t)$
begin to decrease almost immediately, reaching a minimum value at $T=60$, so that the approximate
number of all infected $(i_{*}^{2}(60)+j_{*}^{2}(60))$ is 230~people, which is much less than the
initial value of~1000.

The results of the 30-day policy for $\Re_{0}=3.0$ and $\Re_{0}=6.0$ are presented in
Figures~\ref{pict2} and~\ref{pict5}. One can see in these figures, that at $\Re_{0}=3.0$ for both,
OCP-1 and OCP-2, the graph of $j_{*}^{1}(t)$ or $j_{*}^{2}(t)$ (symptomatic virus carriers) passes
its maximum and starts to decrease. However, it is noteworthy that at $\Re_{0}=6.0$ for OCP-2, the
level of symptomatically infected $j_{*}^{2}(t)$ is increasing even under the strongest quarantine
measures. One conclusion that has to be withdrawn from these results is that the importance of
actual dependency of the incidence rate on the population size $N(t)$.

\begin{figure}[htb]
\begin{center}
\includegraphics[width=7.5cm,height=6.5cm]{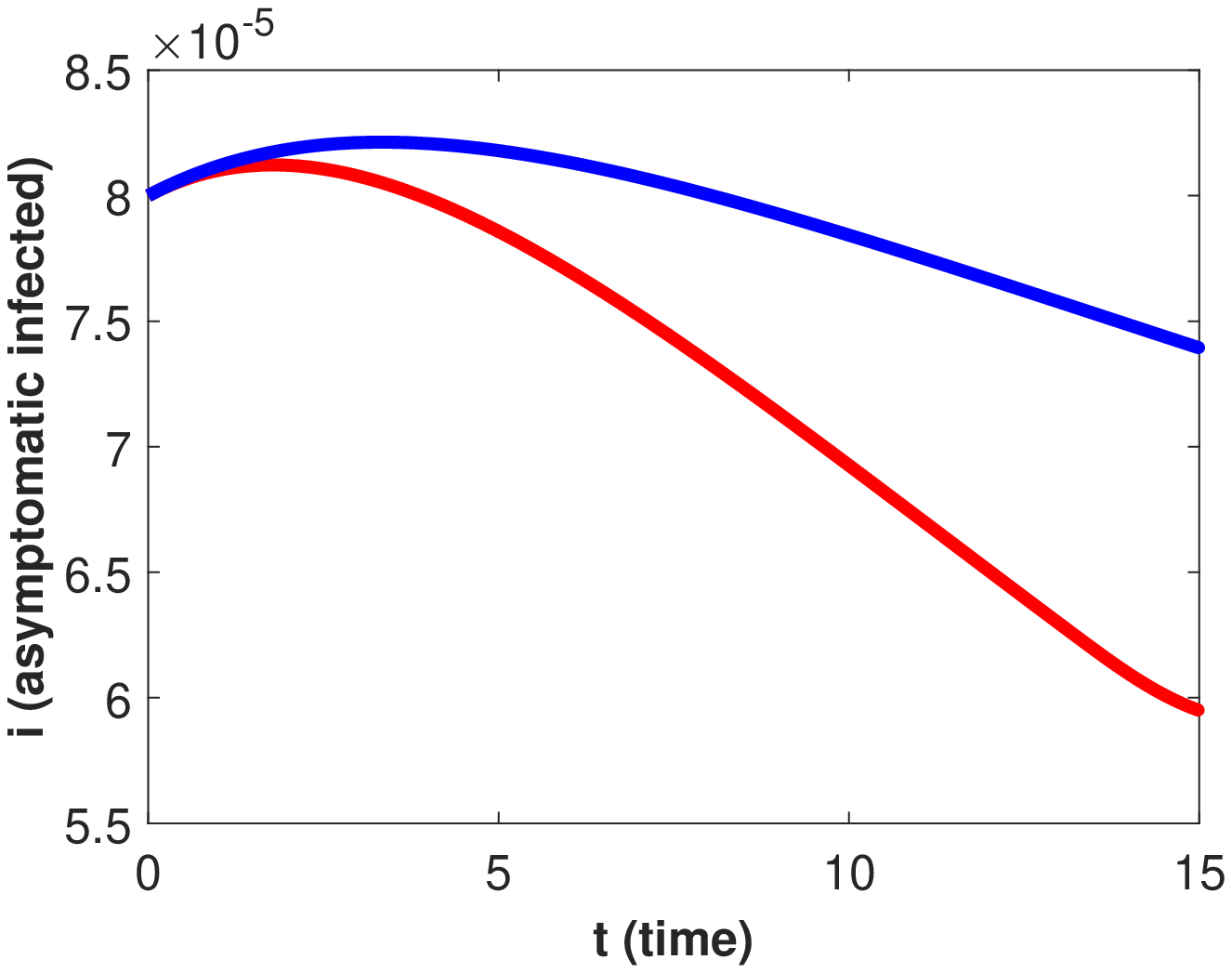}
\includegraphics[width=7.5cm,height=6.5cm]{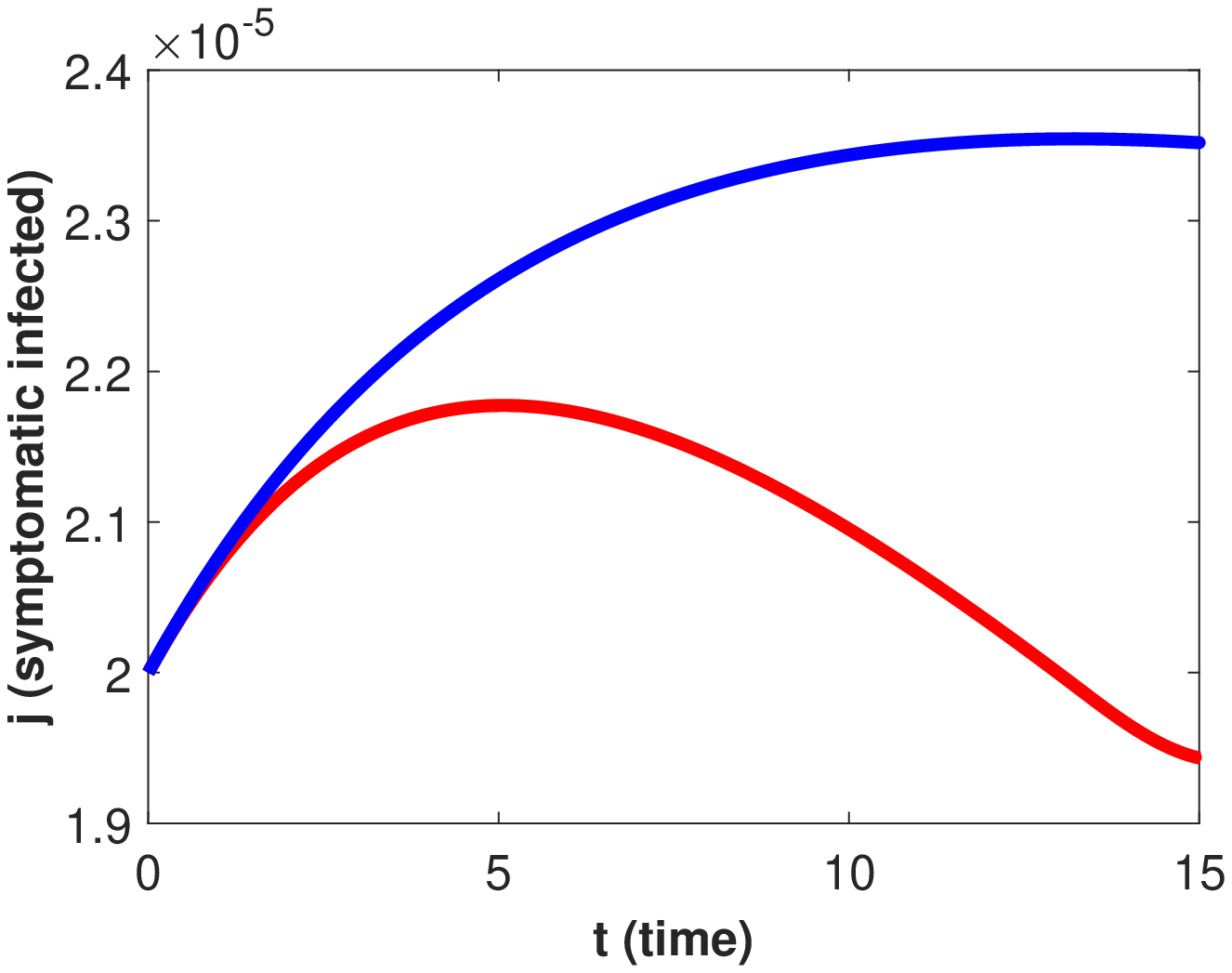}\\
\includegraphics[width=7.5cm,height=6.5cm]{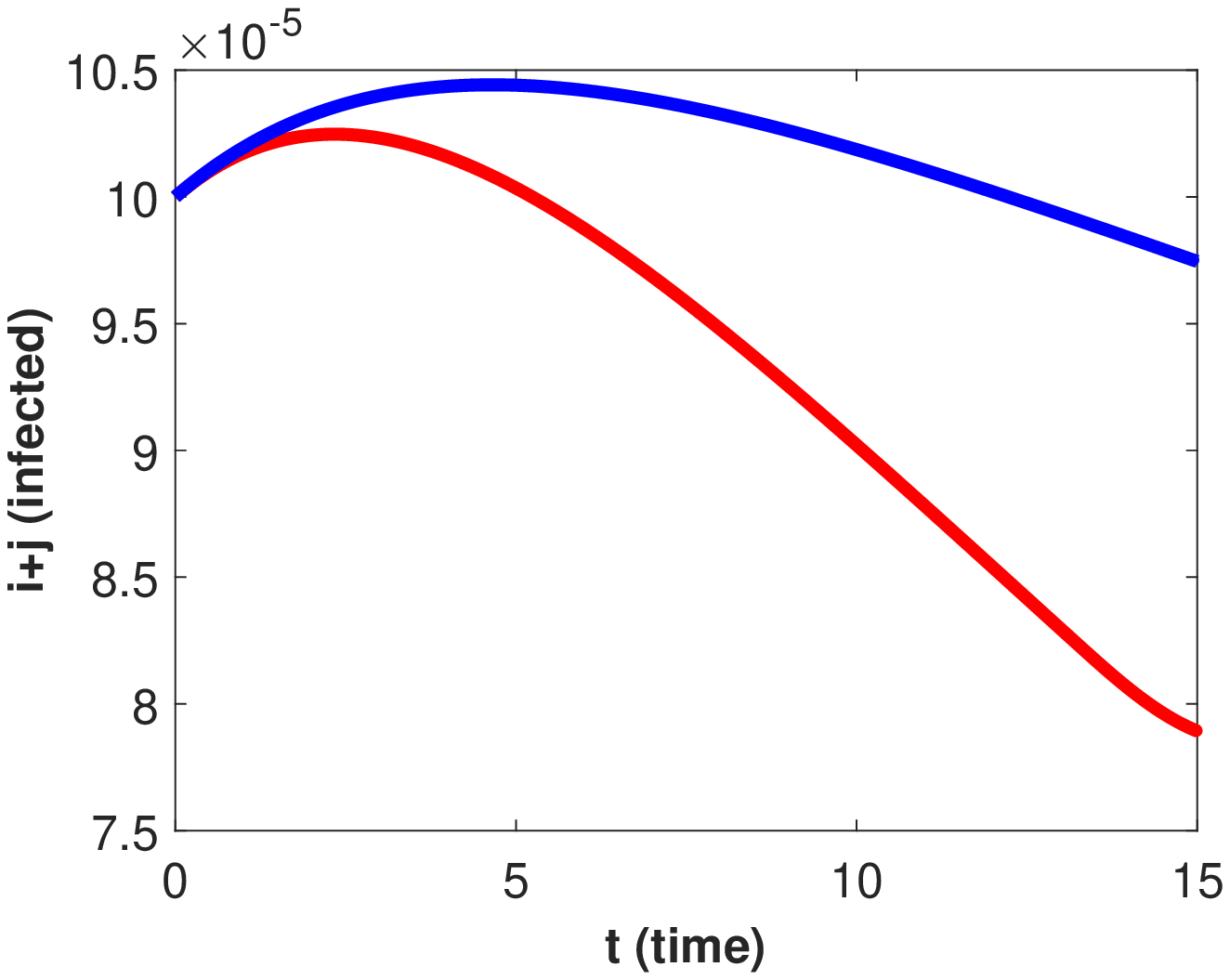}
\includegraphics[width=7.5cm,height=6.5cm]{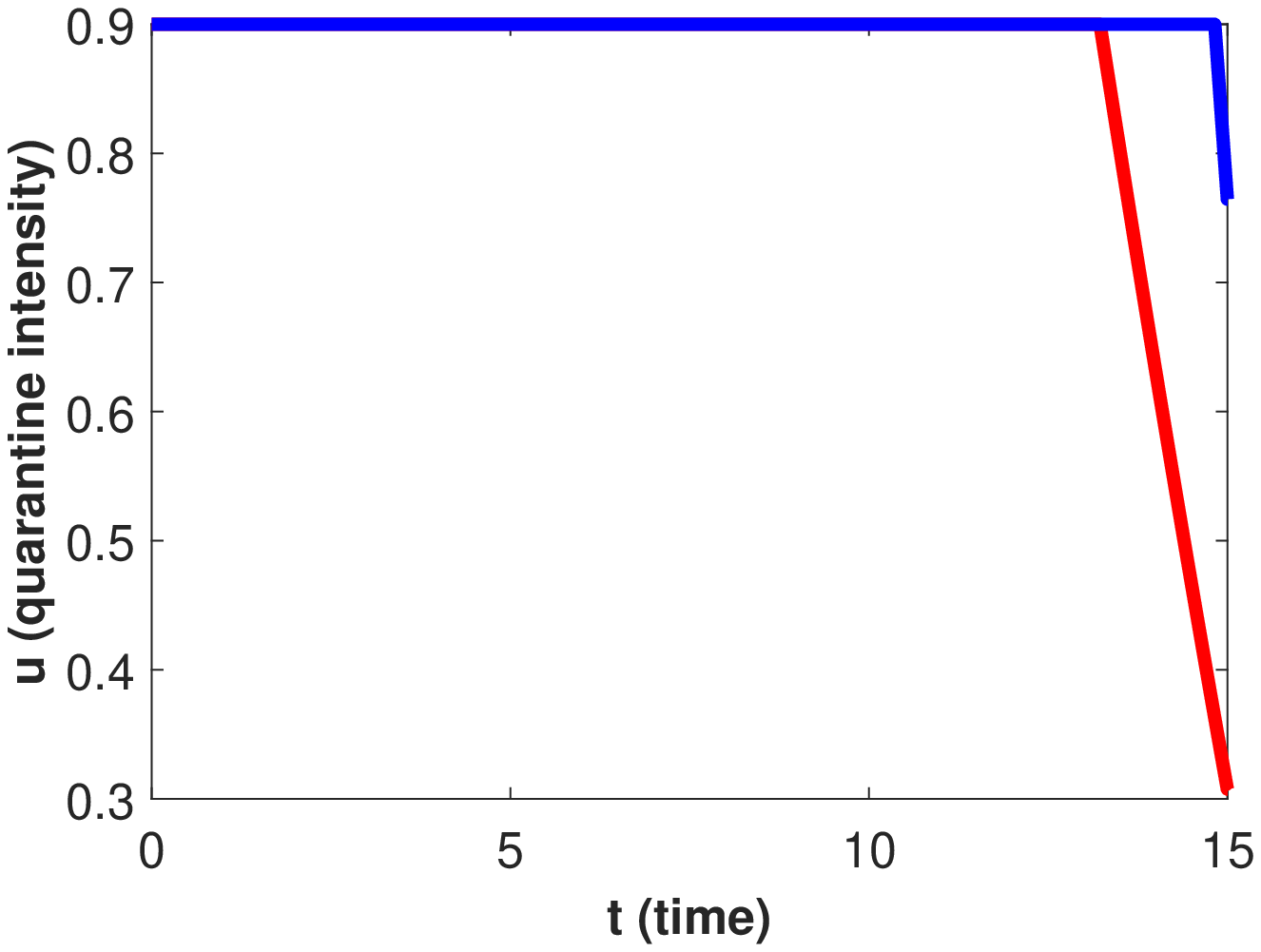}
\caption{OCP-2:~optimal solutions $i_{*}^{2}(t)$, $j_{*}^{2}(t)$, $i_{*}^{2}(t)+j_{*}^{2}(t)$ and
         optimal control $u_{*}^{2}(t)$ for $T=15$~days and $\Re_{0}=3.0$~(red), $\Re_{0}=6.0$~(blue):
         upper~row:~$i_{*}^{2}(t)$,~$j_{*}^{2}(t)$;
         lower~row:~$i_{*}^{2}(t)+j_{*}^{2}(t)$,~$u_{*}^{2}(t)$.}\label{pict4}
\end{center}
\end{figure}

\begin{figure}[htb]
\begin{center}
\includegraphics[width=7.5cm,height=6.5cm]{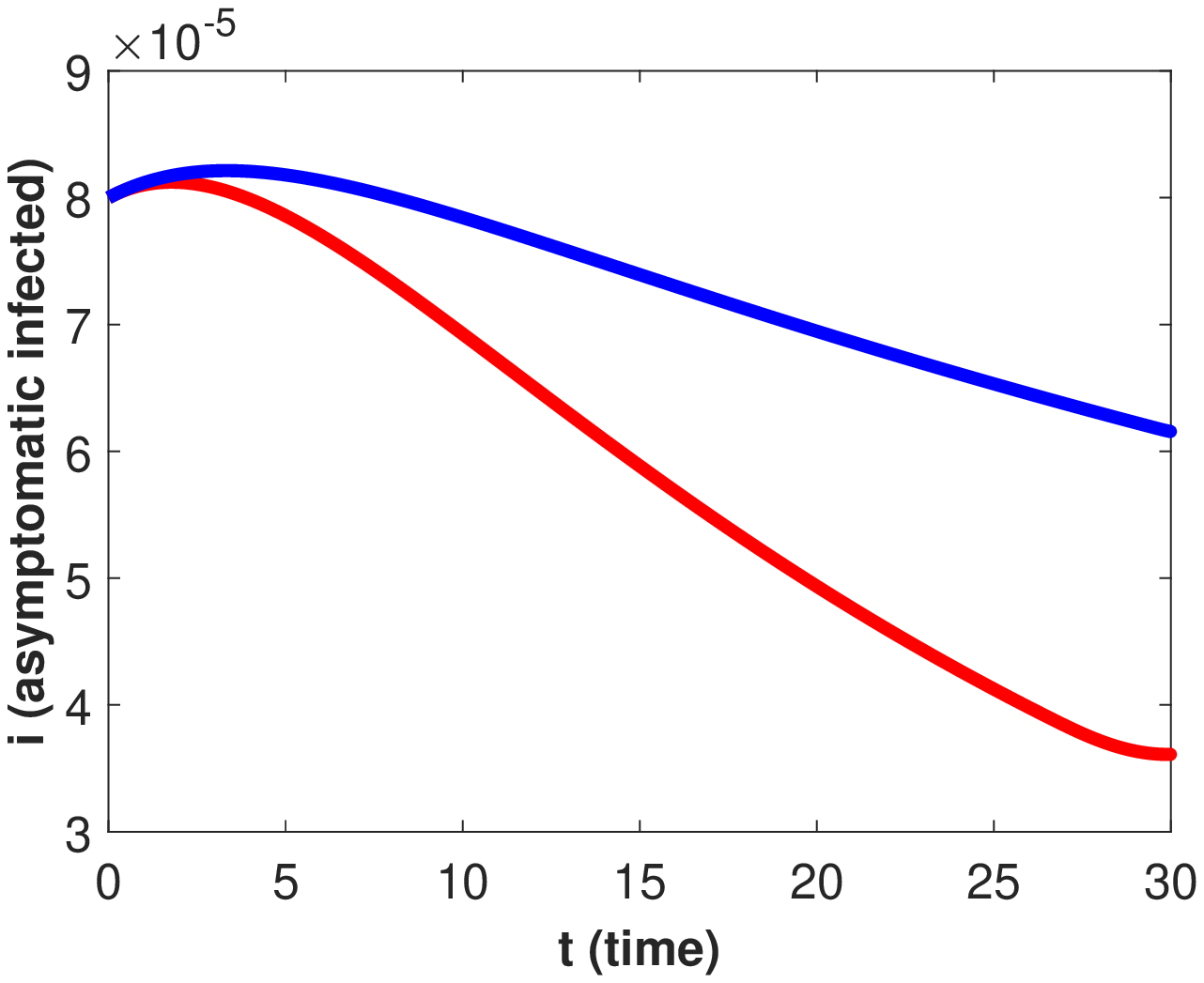}
\includegraphics[width=7.5cm,height=6.5cm]{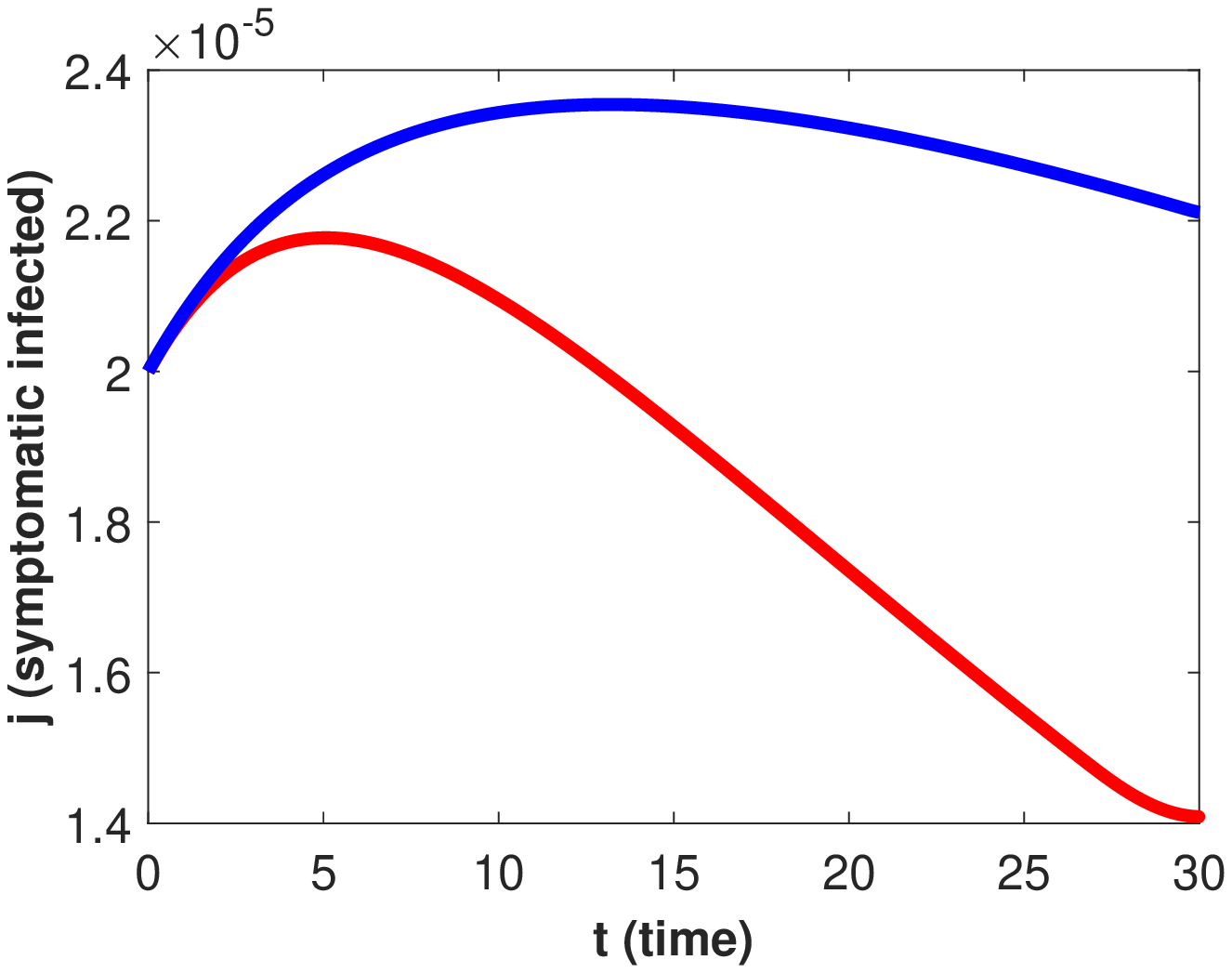}\\
\includegraphics[width=7.5cm,height=6.5cm]{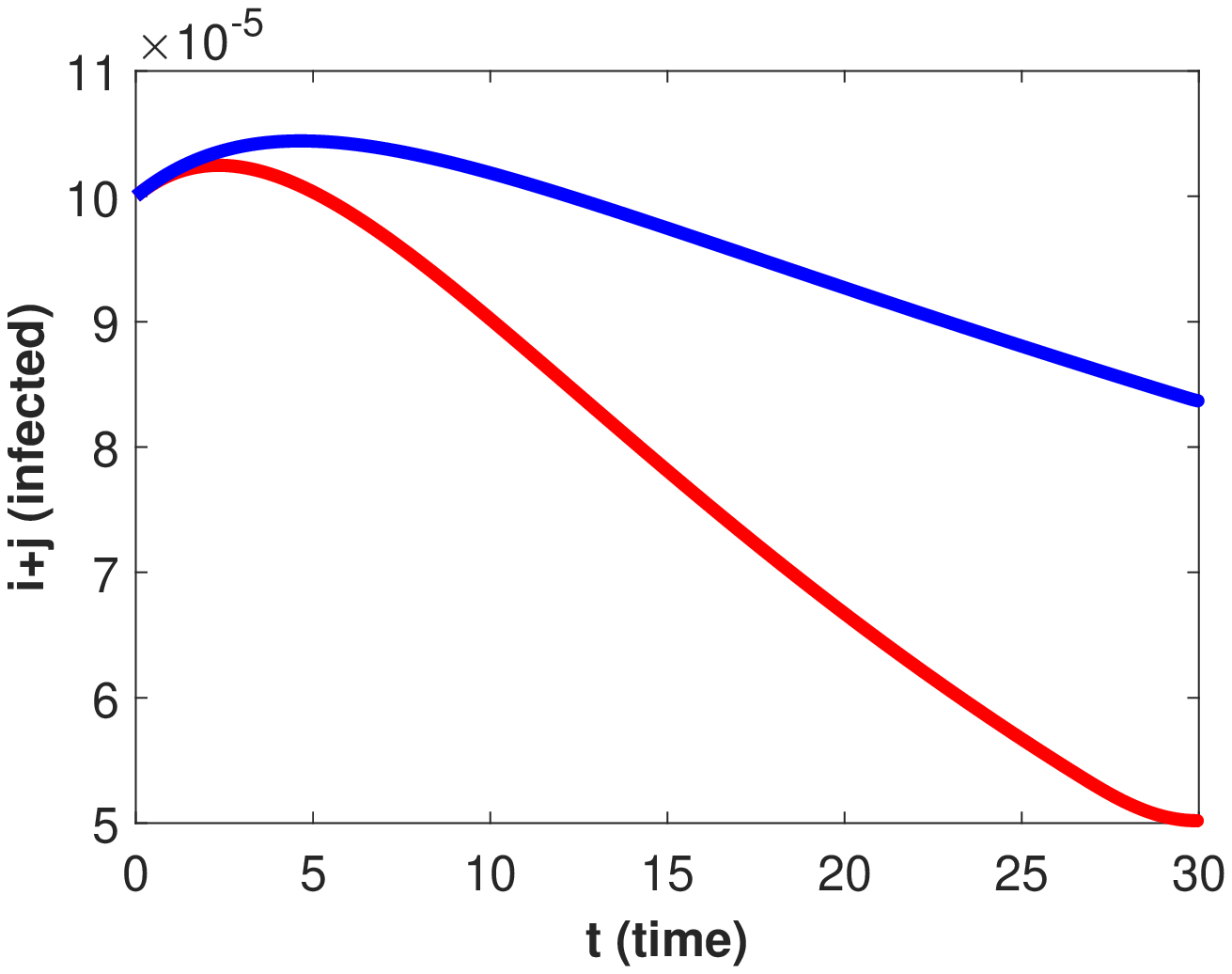}
\includegraphics[width=7.5cm,height=6.5cm]{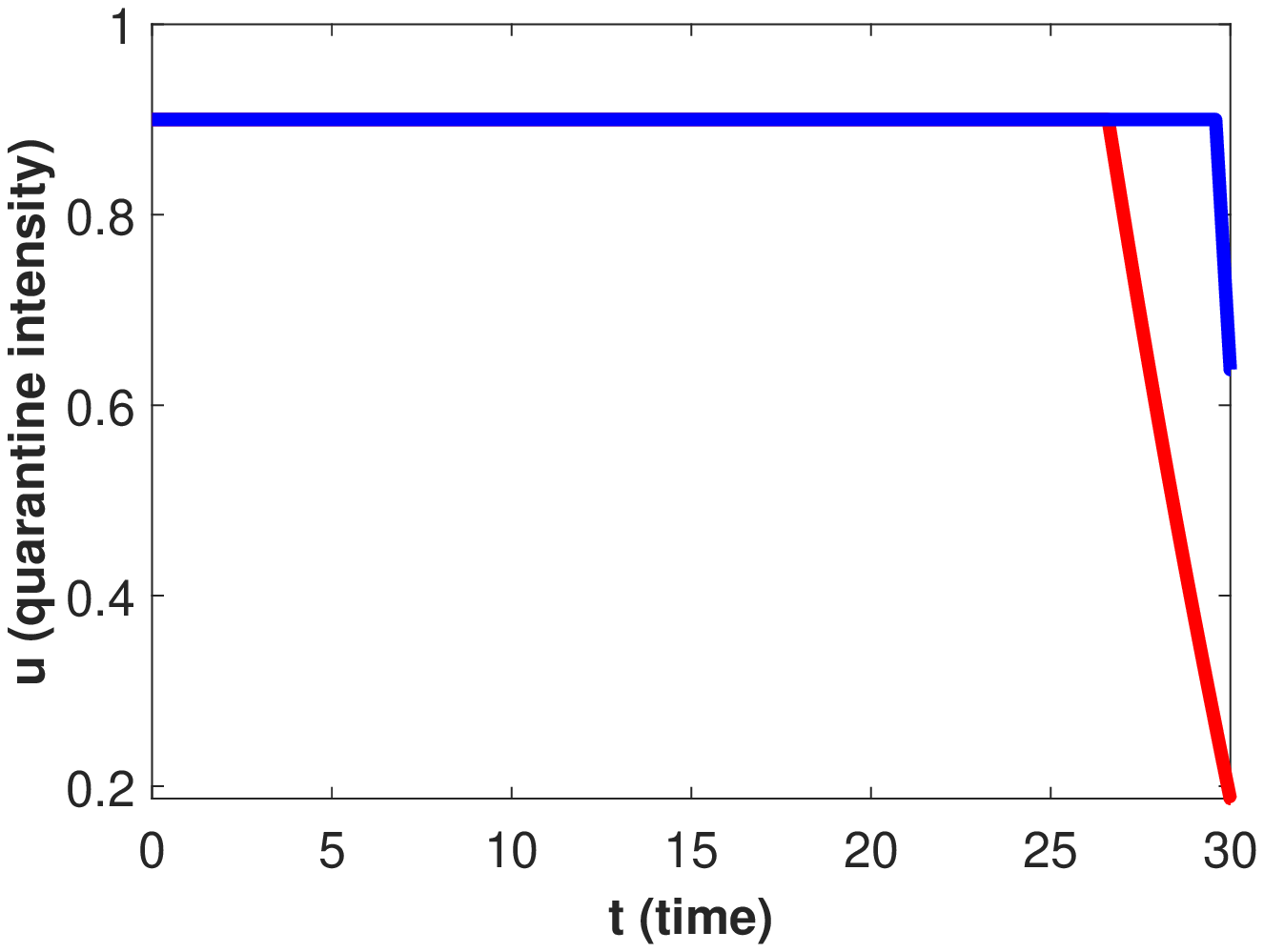}
\caption{OCP-2:~optimal solutions $i_{*}^{2}(t)$, $j_{*}^{2}(t)$, $i_{*}^{2}(t)+j_{*}^{2}(t)$ and
         optimal control $u_{*}^{2}(t)$ for $T=30$~days and $\Re_{0}=3.0$~(red), $\Re_{0}=6.0$~(blue):
         upper~row:~$i_{*}^{2}(t)$,~$j_{*}^{2}(t)$;
         lower~row:~$i_{*}^{2}(t)+j_{*}^{2}(t)$,~$u_{*}^{2}(t)$.}\label{pict5}
\end{center}
\end{figure}

\begin{figure}[htb]
\begin{center}
\includegraphics[width=7.5cm,height=6.5cm]{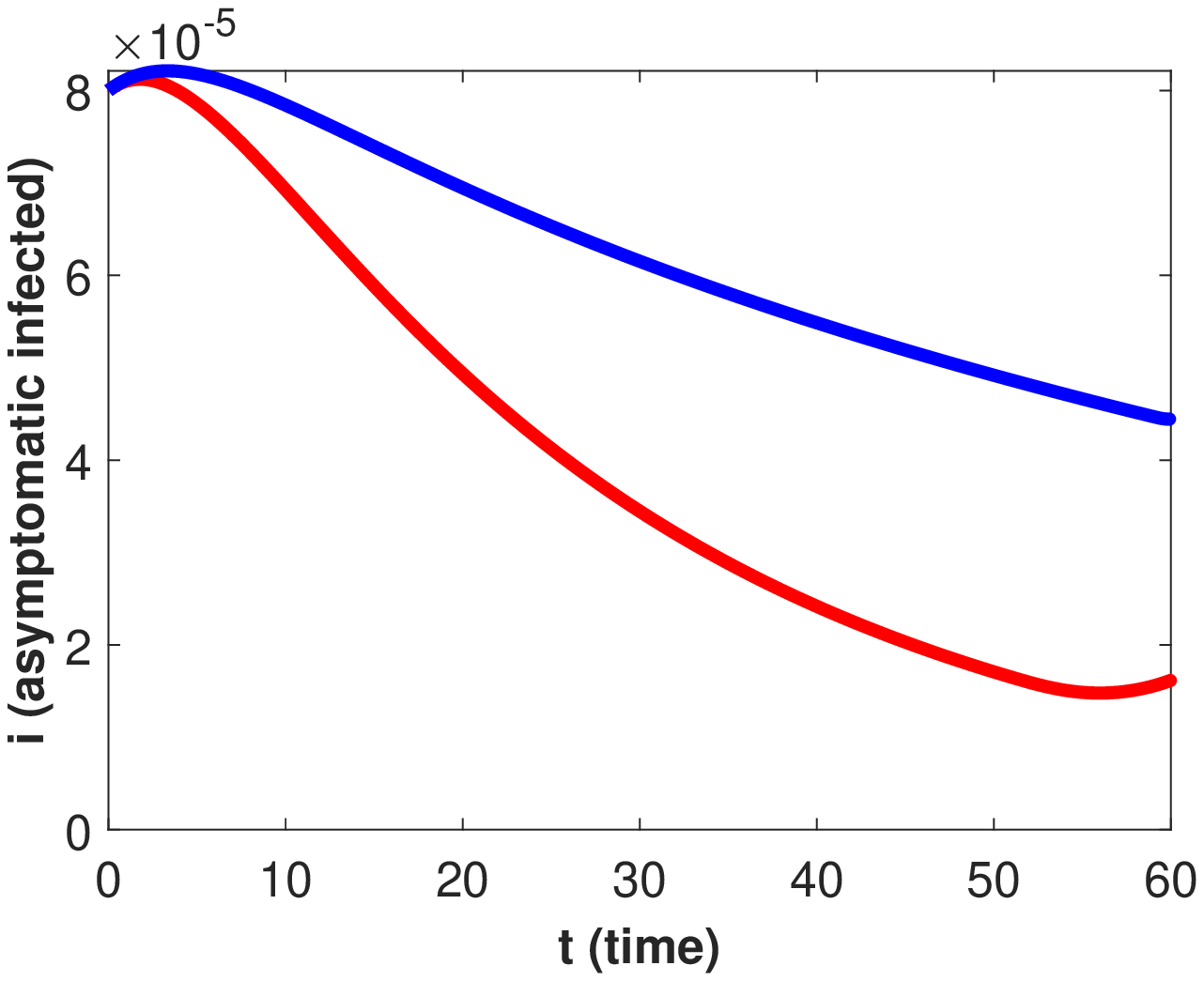}
\includegraphics[width=7.5cm,height=6.5cm]{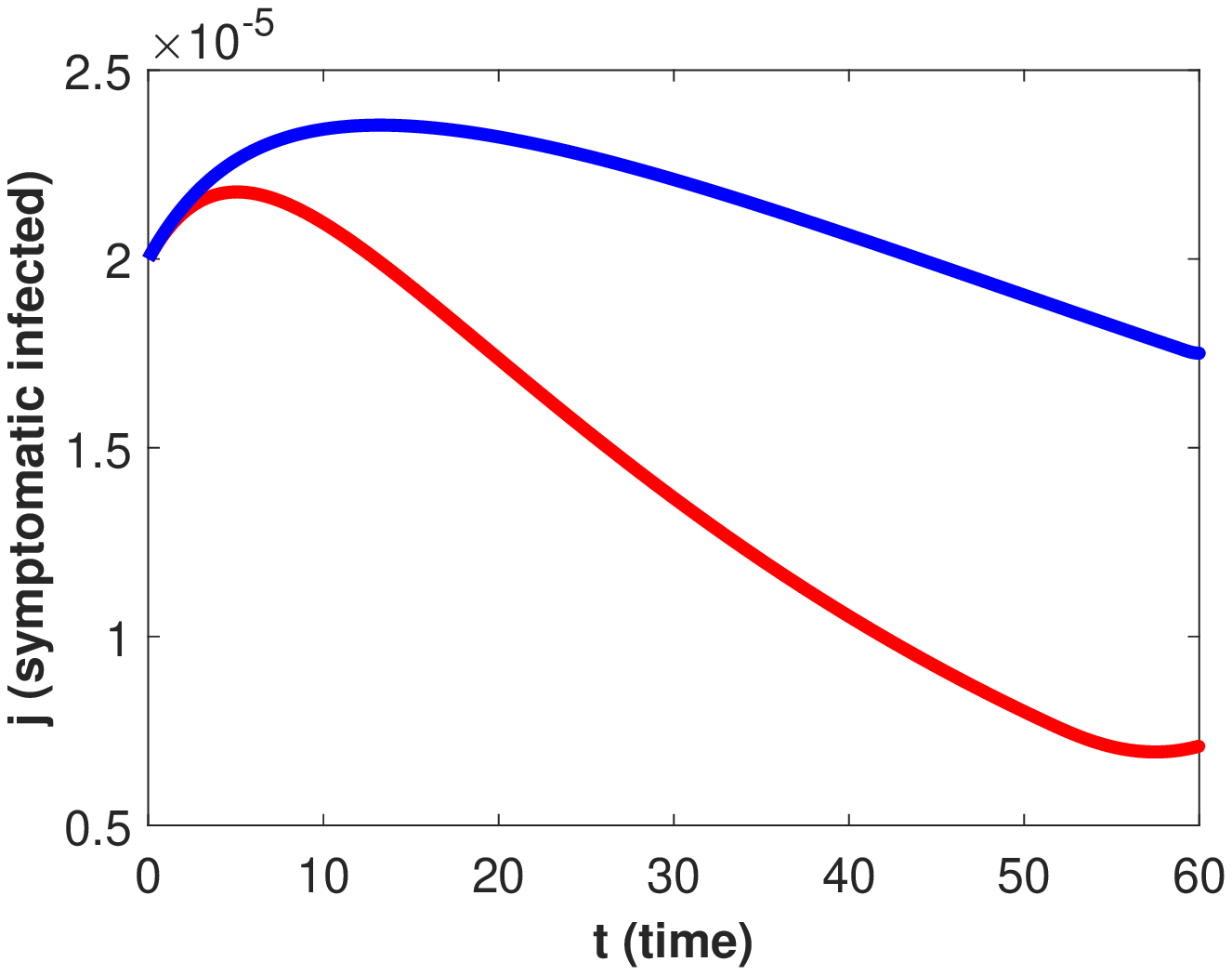}\\
\includegraphics[width=7.5cm,height=6.5cm]{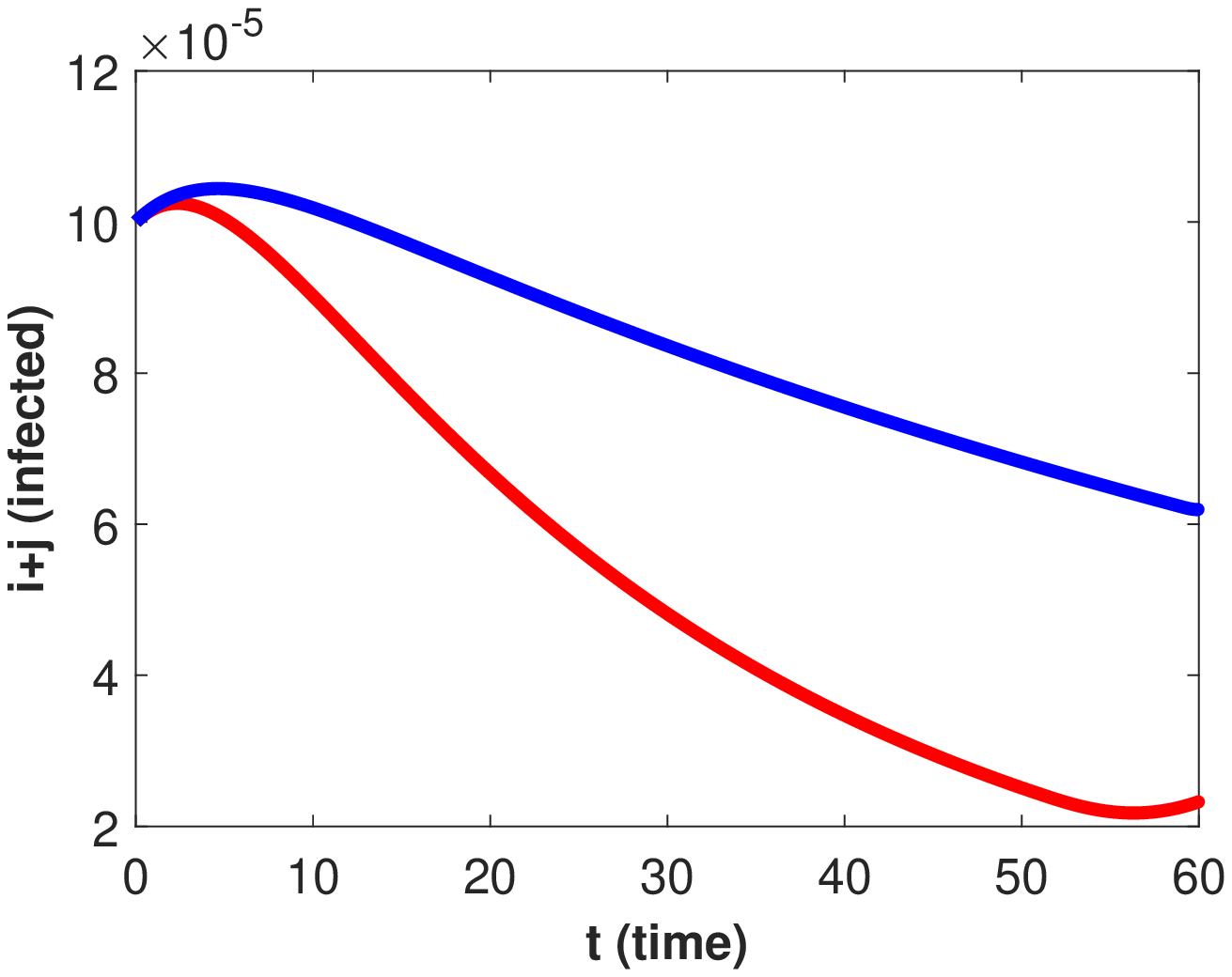}
\includegraphics[width=7.5cm,height=6.5cm]{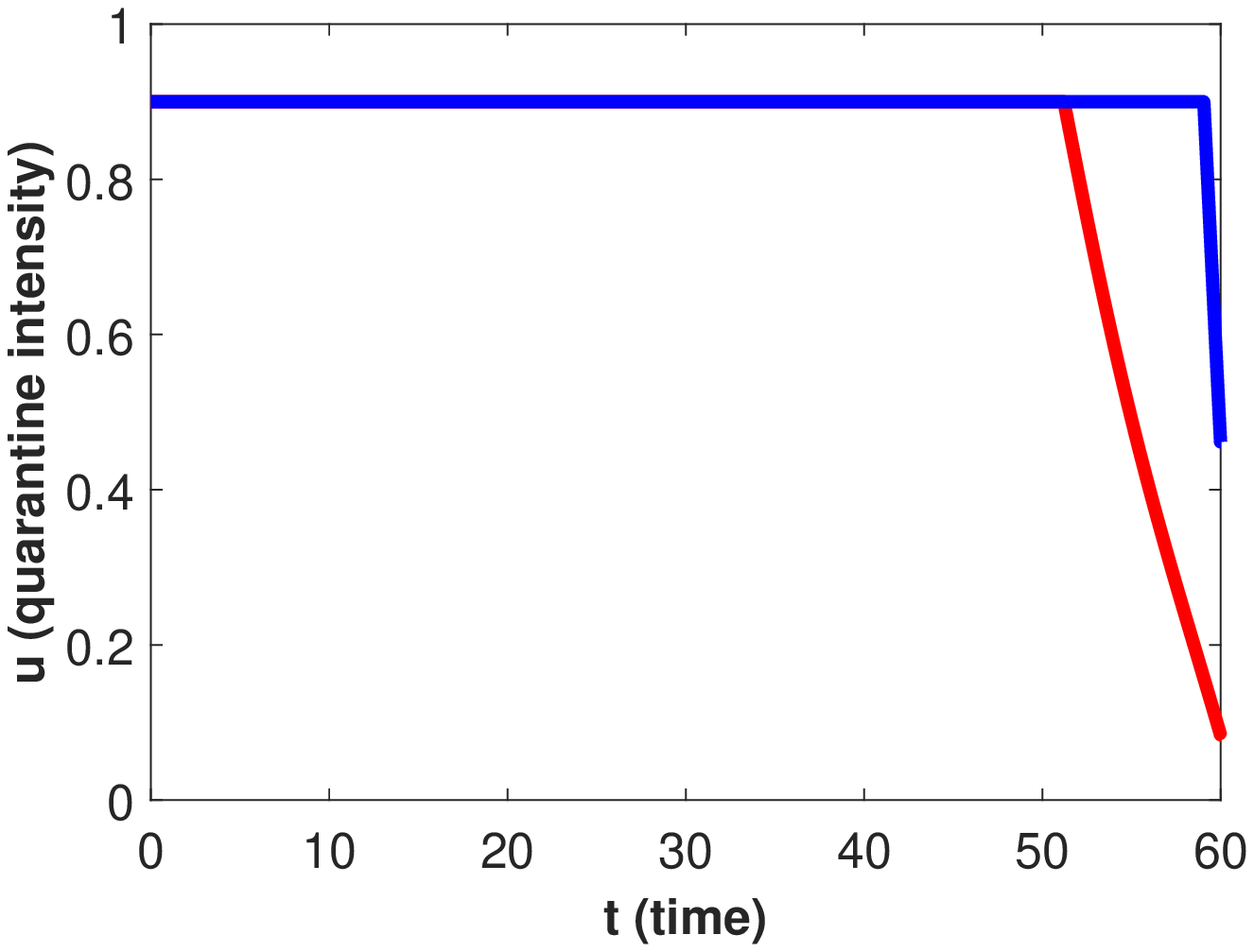}
\caption{OCP-2:~optimal solutions $i_{*}^{2}(t)$, $j_{*}^{2}(t)$, $i_{*}^{2}(t)+j_{*}^{2}(t)$ and
         optimal control $u_{*}^{2}(t)$ for $T=60$~days and $\Re_{0}=3.0$~(red), $\Re_{0}=6.0$~(blue):
         upper~row:~$i_{*}^{2}(t)$,~$j_{*}^{2}(t)$;
         lower~row:~$i_{*}^{2}(t)+j_{*}^{2}(t)$,~$u_{*}^{2}(t)$.}\label{pict6}
\end{center}
\end{figure}

The blue curves in all Figures~\ref{pict1}--\ref{pict6} obtained for the high infectivity level
($\Re_{0}=6.0$), show that the 60-day quarantine gives considerably better results for both~OCP-1
and~OCP-2. For both optimal control problems, even in the case of the high communicability of the
infection, the corresponding graphs for the infection level pass their maxima and start decreasing.
Let us compare OCP-1 results with those for the uncontrolled Model-1 using Table~\ref{table2} and
Figure~\ref{pict0}. Thus, for ($\Re_{0}=6.0$) without optimal quarantine policy, at the end of
60~days there would be approximately 3,234,000~people infected (see the fourth column of
Table~\ref{table2}) and even more (3,800,000) at the peak of infection (see Figure~\ref{pict0}).
Under optimal control (OCP-1) this number would be reduced to 107--108~infected at the end of the
60th day (see Figure~\ref{pict3} and the third column of Table~\ref{table2}).

For the both problems, the optimal controls $u_{*}^{1}(t)$ and $u_{*}^{2}(t)$ should be kept as
high as possible from the beginning of the policy and till a considerable decrease of the infection
level is reached. Then the quarantine measures maybe slightly relaxed.

Since the impact of quarantine in OCP-2 is smaller (as it was explained earlier, in
Section~\ref{optimal-control-problems}, when the incidence rate types were discussed), it is not a
surprise that the corresponding optimal control $u_{*}^{2}(t)$ is to keep the value of $u_{\max}$
during almost the entire time period. The optimal control $u_{*}^{1}(t)$ should be maximal during
the first month and then different kind of restrictions can be slowly reduced, day by day. Although
all our numerical results show that the decrease in quarantine measures does not become zero even
at the very end of each restriction period, this fact is consistent with Lemmas~\ref{lemma2}
and~\ref{lemma5}.

We carried out calculations for both optimal control problems also in the 120-day quarantine
period, as can be seen from Table~\ref{table2}. Although the level of infection with such a long
quarantine does indeed decrease in comparison with the 60-day quarantine, the optimal value of the
objective function~(\ref{num1.15}) increases, so as the cost of the quarantine. In spite of the
fact that, for example, for OCP-1, the number of infected at the end of a 120~day-quarantine would
be~43 compared to~105 at the end of a 60-day quarantine, such a long quarantine is simply
impossible to impose and obviously it is unrealistic from an economic point of view.

\section{Conclusions}
In this paper, we explored the optimal, in the terms of minimizing the cost of intervention and the
infection level, quarantine implemented to control the spread of COVID-19 in a human population. To
address this problem, we constructed two SEIR-type models. We took into consideration that the
real-life infection rate for COVID-19 is unknown, and, therefore, the models that we constructed
are different in the form of their functional dependency on the population size $N$. A bounded
control function reflecting the ``intensity of quarantine measures'' in the population has been
introduced into each of these models. We have to note that this type of control reflects all sorts
of the direct and indirect measures, such as quarantine, mask-wearing, various educational and
information campaigns, aimed at reducing the possibility of transmission of the virus from the
infected to the susceptible individuals. In our study we assumed that there is neither vaccine, nor
drug available for the disease treatment. By the term ``quarantine'' we mean all direct (isolation)
and indirect protective measures during a specified period.

The very first observation that should be done is that for these two models the impacts of the
quarantine is very different. This observation dignifies the significance of the actual form of the
dependency of the incidence rate on the population size.

For each of these two control model, the optimal control problem, consisting in minimizing the
Bolza type objective function, was stated. Its terminal part determined the level of disease in the
population caused by COVID-19 at the end of the quarantine period, and its integral part was a
weighted sum of the cumulative level of disease over the entire quarantine period with the total
cost of this quarantine. Applying the Pontryagin maximum principle, we conducted a detail analysis
of the optimal solutions for these optimal control problems (OCP-1 and~OCP-2) and thereby
established the principal properties of the corresponding optimal controls.

Based on the knowledge of the basic reproductive numbers, we estimated the values for the control
models parameters. For a variety of model parameters, we numerically fond the optimal controls for
different durations of the quarantine. The results of computations that were performed using
BOCOP~2.0.5 software are presented and discussed. One of the notable conclusion that follows from
these results is that the quarantine measures with intensity $u_{\max}=0.9$ (that is, a reduction
of contacts to 10\% of the pre-quarantine level) is able to eventually eradicate the epidemic, and
that the value of $u_{\max}$ can be reduced for both control models.

The results of  calculations for both considered models and comparison of the dynamics of the
system subjected to the optimal control with that of the uncontrolled systems (see~(\ref{num1.6})
and~(\ref{num1.13})) confirm the undeniable benefits of the optimal controls. Thus, for both
no-control models, the fraction of the infectious (both, the asymptomatic carriers of the virus
along and the patients with symptoms) grows exponentially very quickly. For example, for
system~(\ref{num1.6}), for a moderate value of the basic reproductive number $\Re_{0}=3.0$, at the
end of the second month the fraction of the infectious carriers (the value of $i+j$ for $T=60$) in
a situation without quarantine is $8.95\cdot10^{-3}$. On the other hand, for the same parameter but
with the optimal quarantine, this fraction is equal to approximately $1.05\cdot10^{-5}$~of the
infected in OCP-1 and $2.34\cdot10^{-5}$ of the infected in OCP-2. (See in Figures~\ref{pict3}
and~\ref{pict6} the corresponding ($i+j$)-curves and Table~\ref{table2}.) The difference in about
1000~times is apparent.



Our analysis and computations lead to the following conclusions regarding the optimal control policy
in the absence of vaccine and effective drugs for COVID-19:
\begin{itemize}
\item The COVID-19 epidemic can be stopped by quarantine measures, and the virus can be eliminated
      by decreasing the level of infection below its survival level, e.g., less than one secondary
      infection produced by one infected individual in each location. This outcome can be reached
      even without vaccination, by quarantine measures only.
\item However, for both considered models short term (under 30~days) quarantine is insufficient for
      preventing the epidemics. One should not expect that Even a 1-month quarantine would be
      sufficient to decrease the infection to and below its survival level. A longer quarantine,
      e.g. 60-days quarantine, is needed to get decisive results.
\item It is essential to keep as strong as possible quarantine for the most time of the planned
      period. The quarantine restrictions can be gradually reduced only when the infection level
      reaches a certain low level.
\item For any quarantine duration, the control should never becomes zero, even at the end of the
      predetermined time interval. This fact confirms our analytical results stated by
      Lemmas~\ref{lemma2} and~\ref{lemma5}. This implies that certain protective measures, such as
      wearing face masks, avoiding crowds and maintaining strong personal hygiene practices, should
      be continued and encouraged in the society even at the end of a strict quarantine.
\end{itemize}

We also would like to stress one more time that the outcome of any study of a quarantine crucially
depends on the actual dependency of the incidence rate on the population size; this is probably the
most important factor for quarantine modeling.

\section*{Acknowledgements}
We are grateful for the patient and conscientious work of Mr.~Craig~M.~Wellington (Texas
A\&M~University, College Station, USA) in the final preparation of the manuscript and for the
multitude of useful and insightful comments.


\appendix
\section{}
\begin{proof}[Proof of Lemma~\ref{lemma1}]
Let the solutions $s(t)$, $e(t)$, $i(t)$, $j(t)$, $r(t)$, $n(t)$ for system~(\ref{num1.6}) with
initial conditions~(\ref{num1.7}) be determined on  interval $[0,t_{1})$, which is the maximum
possible interval for the existence of these solutions. Then, the first equation of the system can
be considered as a linear homogeneous differential equation with the corresponding initial
condition. Integrating yields
$$
s(t)=s_{0}e^{-\int\limits_{0}^{t}\left(\beta_{1}i(\xi)+\beta_{2}j(\xi)\right)d\xi}.
$$
This implies the positiveness of function $s(t)$ on the interval $[0,t_{1})$. Then, the
positiveness of functions $e(t)$, $i(t)$, $j(t)$ on this interval immediately follows from the
arguments similar to those used to justify Proposition~2.1.2 in~(\cite{schattler}). The
positiveness of function $r(t)$ on  interval $[0,t_{1})$ is a consequence of the corresponding
differential equation of system~(\ref{num1.6}) and the positiveness of $i(t)$ and $j(t)$. Finally,
the positiveness of function $n(t)$ on this interval is ensured by~(\ref{num1.9}).

The boundedness of the solution $s(t)$, $e(t)$, $i(t)$, $j(t)$, $r(t)$, $n(t)$ on  interval
$[0,t_{1})$ follows from their positivity, equation~(\ref{num1.9}), and  inequality
$$
n(t)\leq1, \quad t\in[0,t_{1}),
$$
which is a consequence of the last equation of system~(\ref{num1.6}).

Moreover, if $t_{1}>T$, then the statement of the lemma is proven. If $t_{1}\leq T$, then this
statement is ensured by the positiveness and the boundedness of the functions $s(t)$, $e(t)$,
$i(t)$, $j(t)$, $r(t)$ and $n(t)$, as well as the possibility of continuing these solutions over
the entire time interval $[0,T]$~(\cite{hartman}).
\end{proof}


\section{}
\begin{proof}[Proof of Lemma~\ref{lemma-convex}]
Let us note that since the expressions for $z_{3}$ and $z_{4}$ in~(\ref{num0-convex}) do not
contain control $u$ and the convexity of the set does not change under linear transformations, it
is sufficient to establish the convexity of the set:
$$
\begin{aligned}
G(s,e,i,j)=\Bigl\{y&=(y_{1},y_{2},y_{3}): \;
y_{1}=-s\left(\beta_{1}(1-u)^{2}i+\beta_{2}(1-u)j\right), \\
y_{2}&=s\left(\beta_{1}(1-u)^{2}i+\beta_{2}(1-u)j\right), \;
y_{3}\geq0.5\alpha_{3}u^{2}, \; u\in[0,u_{\max}]\Bigr\}.
\end{aligned}
$$

To simplify the subsequent arguments, we introduce the following notations:
$$
M=\beta_{1}si, \quad L=\beta_{2}sj, \quad K=0.5\alpha_{3}
$$
and auxiliary control:
$$
w=1-u, \quad w\in[w_{\min},1], \quad w_{\min}=1-u_{\max}>0.
$$
It is easy to see that the values $M$, $L$, and $K$ are positive. Finally, we will define the
quadratic function:
$$
f(w)=Mw^{2}+Lw, \quad w\in[w_{\min},1].
$$
It is obvious that it is convex and monotonically increasing on this interval.

As a result, we obtain the following set:
$$
H=\Bigl\{y=(y_{1},y_{2},y_{3}): \; y_{1}=-f(w), \; y_{2}=f(w), \; y_{3}\geq K(1-w)^{2}, \; w\in[w_{\min},1]\Bigr\}.
$$
We will further justify its convexity.

Let us take arbitrary two points ${\widehat y},\;{\widetilde y}\in H$,
${\widehat y}\neq{\widetilde y}$ and a number $\lambda\in[0,1]$. We consider that they correspond
to the controls ${\widehat w}$ and ${\widetilde w}$; ${\widehat w},\;{\widetilde w}\in[w_{\min},1]$
such that:
$$
\begin{aligned}
&{\widehat y}_{1}=-f({\widehat w}), \quad {\widehat y}_{2}=f({\widehat w}), \quad
 {\widehat y}_{3}\geq K(1-{\widehat w})^{2}, \\
&{\widetilde y}_{1}=-f({\widetilde w}), \quad {\widetilde y}_{2}=f({\widetilde w}), \quad
{\widetilde y}_{3}\geq K(1-{\widetilde w})^{2}.
\end{aligned}
$$

Next, let us take the point ${\bar y}=\lambda{\widehat y}+(1-\lambda){\widetilde y}$ and show that
${\bar y}\in H$. This inclusion implies the required convexity.

Now, due to the definition of the function $f(w)$, we consider the quadratic equation:
$$
f(w)=\lambda f({\widehat w})+(1-\lambda)f({\widetilde w}).
$$
It is easy to see that it has a unique solution ${\bar w}\in[w_{\min},1]$ such that:
\begin{equation}\label{num1-convex}
f({\bar w})=\lambda f({\widehat w})+(1-\lambda)f({\widetilde w}).
\end{equation}
Thus, we have the validity of the following equalities:
$$
{\bar y}_{1}=\lambda{\widehat y}_{1}+(1-\lambda){\widetilde y}_{1}, \quad
{\bar y}_{2}=\lambda{\widehat y}_{2}+(1-\lambda){\widetilde y}_{2}.
$$

Now, we consider the value $(\lambda{\widehat w}+(1-\lambda){\widetilde w})\in[w_{\min},1]$ and
show that the inequality:
\begin{equation}\label{num2-convex}
{\bar w}\geq\lambda{\widehat w}+(1-\lambda){\widetilde w}
\end{equation}
holds for it. Indeed, this inequality follows directly from equality~\eqref{num1-convex}, the
definition of the convexity of the function $f(w)$ and its monotonic increase on the interval
$[w_{\min},1]$.

It remains for us to justify the inequality:
$$
\lambda{\widehat y}_{3}+(1-\lambda){\widetilde y}_{3}\geq K(1-{\bar w})^{2},
$$
where ${\widehat y}_{3}$ and ${\widetilde y}_{3}$ are subject to the restrictions:
$$
{\widehat y}_{3}\geq K(1-{\widehat w})^{2}, \quad {\widetilde y}_{3}\geq K(1-{\widetilde w})^{2}.
$$
Indeed, due to the convexity of the function $(1-w)^{2}$ for $w\in[w_{\min},1]$, the monotonic
increase of the function $\eta^{2}$ for all $\eta\in[0,1]$ and inequality~\eqref{num2-convex}, we
have a chain of relationships:
$$
\begin{aligned}
\lambda{\widehat y}_{3}+(1-\lambda){\widetilde y}_{3}
&\geq K\left(\lambda(1-{\widehat w})^{2}+(1-\lambda)(1-{\widetilde w})^{2}\right) \\
&\geq K\left(1-[\lambda{\widehat w}+(1-\lambda){\widetilde w}]\right)^{2}\geq K(1-{\bar w})^{2}.
\end{aligned}
$$
This was what was required to be shown.
\end{proof}


\section{}
\begin{proof}[Proof of Lemma~\ref{lemma3}]
We assume the opposite. Let the inequality
\begin{equation}\label{num2.6}
\lambda_{*}^{1}(t_{0})\leq0.5u_{\max}
\end{equation}
hold. Now we consider the possible cases for $B_{*}(t_{0})$.

{\bf Case~1.} Let $B_{*}(t_{0})\geq0$. Using Lemma~\ref{lemma1} and the corresponding equation
from~(\ref{num2.3}), we obtain  inequality $\psi_{1}^{*}(t_{0})-\psi_{2}^{*}(t_{0})\geq0$,
which leads to the contradictory inequality $A_{*}(t_{0})>0$. Therefore, this case is impossible.

{\bf Case~2.} Let $B_{*}(t_{0})<0$. Again, due to Lemma~\ref{lemma1} and the same formula
from~(\ref{num2.3}), we find the inequality:
\begin{equation}\label{num2.7}
\psi_{1}^{*}(t_{0})-\psi_{2}^{*}(t_{0})<0.
\end{equation}
By relationships~(\ref{num2.3}) and~(\ref{num2.5}), we rewrite the inequality~(\ref{num2.6}) as
\begin{equation}\label{num2.8}
s_{*}^{1}(t_{0})\left(\beta_{1}(2-u_{\max})i_{*}^{1}(t_{0})+\beta_{2}j_{*}^{1}(t_{0})\right)
                     (\psi_{1}^{*}(t_{0})-\psi_{2}^{*}(t_{0}))\geq0.5\alpha_{3}u_{\max}.
\end{equation}
Using~(\ref{num1.10}) and Lemma~\ref{lemma1}, we obtain the inequality:
$$
s_{*}^{1}(t_{0})\left(\beta_{1}(2-u_{\max})i_{*}^{1}(t_{0})+\beta_{2}j_{*}^{1}(t_{0})\right)>0,
$$
which together with~(\ref{num2.7}) contradicts the inequality~(\ref{num2.8}). Hence, this case is
impossible as well.

Therefore, our assumption was wrong and the required statement is proven.
\end{proof}

\end{document}